\documentclass[11pt]{amsart}
\usepackage{amsmath}
\usepackage{amssymb}
\usepackage{amsfonts}
\usepackage{amsthm}
\usepackage{enumerate}
\usepackage[mathscr]{eucal}
\usepackage{graphicx}
\usepackage[bookmarksnumbered, bookmarksopen, colorlinks, citecolor=blue, linkcolor=blue]{hyperref}

\usepackage{eufrak}
\usepackage{color}
\usepackage{tikz-cd}
\usepackage{enumerate}
\usepackage{cleveref}

\usetikzlibrary{decorations.pathreplacing}

\newtheorem{theorem}{Theorem}[section]
\newtheorem{lemma}[theorem]{Lemma}

\theoremstyle{definition}
\newtheorem{definition}[theorem]{Definition}

\newtheorem{proposition}[theorem]{Proposition}

\theoremstyle{remark}
\newtheorem{remark}[theorem]{Remark}

\numberwithin{equation}{section}

\def\intslash{\rlap{\kern  .32em $\mspace {.5mu}\backslash$ }\int}
\def\qsl{{\rlap{\kern  .32em $\mspace {.5mu}\backslash$ }\int_{Q_x}}}

\def\set#1{{ \left\{ #1 \right\} }}

\def\dist{{\text{\it dist}}}

\newcommand{\Be}{\begin{equation}}
\newcommand{\Ee}{\end{equation}}
\newcommand{\Bes}{\begin{equation*}}
\newcommand{\Ees}{\end{equation*}}
\newcommand{\Bsp}{\begin{split}}
\newcommand{\Esp}{\end{split}}
\newcommand{\Bm}{\begin{multline}}
\newcommand{\Em}{\end{multline}}
\newcommand{\Bea}{\begin{eqnarray}}
\newcommand{\Eea}{\end{eqnarray}}
\newcommand{\Beas}{\begin{eqnarray*}}
\newcommand{\Eeas}{\end{eqnarray*}}
\newcommand{\Benu}{\begin{enumerate}}
\newcommand{\Eenu}{\end{enumerate}}
\newcommand{\Bi}{\begin{itemize}}
\newcommand{\Ei}{\end{itemize}}

\newcommand{\supp}{\operatorname{supp}}

\newcommand{\mQ}{\operatorname{\mathcal{Q}}}

\newcommand{\mF}{\operatorname{\mathcal{F}}}
\newcommand{\mB}{\operatorname{\mathcal{B}}}
\newcommand{\mD}{\operatorname{\mathcal{D}}}
\newcommand{\mH}{\operatorname{\mathcal{H}}}
\newcommand{\mL}{\operatorname{\mathcal{L}}}
\newcommand{\mT}{\operatorname{\mathcal{T}}}

\newcommand{\R}{\operatorname{\mathbb{R}}}
\newcommand{\C}{\operatorname{\mathbb{C}}}
\newcommand{\Z}{\operatorname{\mathbb{Z}}}

\newcommand{\sS}{\operatorname{\mathbb{S}}}

\newcommand*{\dif}{{\rm d}}

\def\set #1{ \left\{ #1 \right\} }

\usepackage{hyperref}

\begin{document}

\title[Weak (1,1) estimates for singular integral operators]{weak type (1,1) bound criterion for maximal truncated singular integral operators with rough kernels}

\author{Xudong Lai}
\address{Xudong Lai: Institute for Advanced Study in Mathematics\\
Harbin Institute of Technology\\
Harbin
150001\\
China;
Zhengzhou Research Institute\\
Harbin Institute of Technology\\
Zhengzhou
450000\\
China}
\email{xudonglai@hit.edu.cn}

\author{Lingxin Xu}
\address{Lingxin Xu: Institute for Advanced Study in Mathematics, Harbin Institute of Technology, Harbin, 150001, China}
\email{xulingxin97@163.com}




\keywords{singular integral operators, rough kernels, weak (1,1) estimates}

\begin{abstract}
This paper proves the weak type (1,1) boundedness for maximal truncated singular integral operators with rough non-convolution kernels of the form $\Omega(x-y) K(x,y)$, where $\Omega$ is homogeneous of degree zero with $\Omega \in L\log L(\sS^{d-1})$ and $K$ is a standard Calder\'on-Zygmund kernel.
This extends the recent resolution of the classical convolution case by Lai \cite{Lai2025weak} to a wider class of operators.

%

\end{abstract}

\maketitle

\section{Introduction and state of the main result}
Let $d\geq 2$, and let $\Omega$ be a measurable function on $\R^d\setminus\{0\}$ whose restriction to $\sS^{d-1}$ belongs to $L^1(\sS^{d-1})$.
We assume that $\Omega$ is homogeneous of degree zero:
    \begin{equation}\label{homogeneous function of degree zero}
    \Omega(r\theta)=\Omega(\theta),
    \qquad \text{for all}\quad \theta\in\sS^{d-1},\quad r>0,
    \end{equation}
and satisfies the cancellation condition
    \begin{equation}\label{cancellation condition}
    \int_{\sS^{d-1}}\Omega(\theta)\, \dif \theta=0.
    \end{equation}
The associated homogeneous singular integral operator is
    \[
    T_{\Omega}f(x)
    =\mathrm{p.v.}\int_{\R^d}
    \frac{\Omega(x-y)}{|x-y|^d}f(y)\, \dif y.
    \]
The angular function $\Omega$ is not assumed to be smooth, so the
operator is called a singular integral with rough kernel.

In 1956, Calder\'on and Zygmund \cite{Calderon1956Onsingular}
proved that $T_{\Omega}$ is bounded on $L^p(\R^d)$ for $1<p<\infty$
when $\Omega$ is odd and belongs to $L^1(\sS^{d-1})$, or when
$\Omega$ is even and belongs to $L\log L(\sS^{d-1})$.
Strong $L^1$ boundedness generally fails for singular integral operators.
The Hilbert transform is a familiar example. The natural endpoint
question is therefore whether $T_{\Omega}$ is of weak type $(1,1)$.
In 1988, Christ and Rubio de Francia \cite{Christ1988Weaktype}
proved this estimate in dimension two under the assumption
$\Omega\in L\log L(\sS^1)$.
Independently, Hofmann \cite{Hofmann1988Weak} obtained the estimate
under the stronger assumption $\Omega\in L^q(\sS^1)$ for some $q>1$.
In 1996, Seeger \cite{Seeger1996Singular} established the estimate
in every dimension $d\geq 2$ for $\Omega\in L\log L(\sS^{d-1})$.
For related results, see
\cite{Christ1988Weak,Christ1988Theweak,Seeger2014Aweak,Sjogren1997Rough,Tao1999Theweak}.

A harder endpoint question concerns the maximal truncated operator
    \[
    \mT_{\Omega,*}f(x)
    =\sup_{\epsilon>0}  \Big|
    \int_{|x-y|>\epsilon}
    \frac{\Omega(x-y)}{|x-y|^d}f(y)\, \dif y  \Big|.
    \]
This operator requires control of all truncation radii at once.
The lack of angular regularity makes such control difficult.
Calder\'on and Zygmund \cite{Calderon1956Onsingular} also established
its $L^p$ boundedness for $1<p<\infty$ under the odd and even kernel
assumptions stated above.
Its weak type $(1,1)$ boundedness remained open for many years,
even for bounded angular functions.
Seeger \cite{Seeger2014Aweak} highlighted this question in 2014.
In a 2025 preprint, Lai \cite{Lai2025weak} proved the weak type $(1,1)$
estimate for $\Omega\in L\log L(\sS^{d-1})$ satisfying \eqref{homogeneous function of degree zero} and \eqref{cancellation condition}.


In this paper, we study the more general maximal truncated operator:
    \begin{align} \label{maximal truncated operator}
	T_{\Omega,*} f(x) 
	= \sup_{\epsilon>0} \Big| \int_{|x-y|>\epsilon} \Omega(x-y) K(x,y) f(y)\dif y\Big|, 
    \end{align}
where $K:\R^d \times \R^d \setminus \set{(x,x): x\in\R^d} \rightarrow \C$ is a binary function.
We assume the size condition
    \begin{equation}\label{Size condition}
    |K(x,y)|\leq \frac{C_1}{|x-y|^d},\qquad x\neq y,
    \end{equation}
and the following regularity estimates for some $0<\gamma\leq 1$
and constants $C_2,C_3>0$:
    \begin{equation}\label{regularity condition}
    \begin{aligned}
    |K(x_1,y)-K(x_2,y)|
    &\leq C_2\frac{|x_1-x_2|^\gamma}{|x_1-y|^{d+\gamma}},
    && |x_1-y|\geq 2|x_1-x_2|,\\
    |K(x,y_1)-K(x,y_2)|
    &\leq C_3\frac{|y_1-y_2|^\gamma}{|x-y_1|^{d+\gamma}},
    && |x-y_1|\geq 2|y_1-y_2|.
   \end{aligned}
   \end{equation}
We assume that $\Omega\in L\log L(\sS^{d-1})$ and satisfies
\eqref{homogeneous function of degree zero}.
Taking $K(x,y)=|x-y|^{-d}$ recovers $\mT_{\Omega,*}$.

Our aim is to obtain a weak type $(1,1)$ estimate for this more general
class under an $L^2$ boundedness assumption on the maximal operator
For $\Omega\not\equiv 0$, set
\begin{align} \label{C_Omega}
 	C_{\Omega}
 	:= \|\Omega\|_{L\log L(\sS^{d-1})}	 + \int_{\sS^{d-1}} |\Omega(\theta)| \Big( 1+ \log^+\big( |\Omega(\theta)| / \|\Omega\|_{L^1(\sS^{d-1})} \big) \Big) \dif\theta.
 	\end{align}
where $\log^+t=\max\{\log t,0\}$ for $t>0$.
The integrand is defined to be zero where $\Omega=0$.
When $\Omega\equiv 0$, we set $C_{\Omega}=0$.
Then, the main result is stated as follows:
 
    \begin{theorem} \label{main theorem}
    Suppose $d\geq 2$ and let $T_{\Omega,*}$ be defined as in (\ref{maximal truncated operator}), where \(K\) satisfies \eqref{Size condition} and \eqref{regularity condition}, and \(\Omega\in L\log L(S^{d-1})\) satisfies \eqref{homogeneous function of degree zero}. 
    Assume further that \(T_{\Omega,*}\) is bounded on \(L^2(\R^d)\). 
 	Then, $T_{\Omega,*}$ is of weak type $(1,1)$.
 	More precisely, for any $f\in L^1(\R^d)$ and $\lambda>0$, 
 	\[
 	\lambda \big| \{x\in\R^d: T_{\Omega,*}f(x)>\lambda \} \big| \lesssim C_{\Omega} \|f\|_{L^1(\R^d)},
 	\]
 	where the constant $C_{\Omega}$ is defined as in (\ref{C_Omega}).   
 	\end{theorem}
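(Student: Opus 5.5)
We prove Theorem \ref{main theorem} by adapting the argument of Lai \cite{Lai2025weak} for $\mT_{\Omega,*}$, in the spirit of Seeger's method \cite{Seeger1996Singular}. The kernel $K$ is treated as a perturbation that is smooth away from the diagonal. By homogeneity we may normalize $\|\Omega\|_{L^1(\sS^{d-1})}=1$, the case $\Omega\equiv0$ being trivial.

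\textbf{Decomposition.} Fix $f\in L^1$ and $\lambda>0$, and perform the Calder\'on--Zygmund decomposition of $f$ at height $\lambda$, giving $f=g+\sum_Q b_Q$ over dyadic cubes $Q$. The good part is handled by the assumed $L^2$ bound for $T_{\Omega,*}$:
\[
\lambda|\{T_{\Omega,*}g>\lambda\}|\lesssim \lambda^{-1}\|g\|_2^2\lesssim\|f\|_1 .
\]
Since $\|T_{\Omega,*}\|_{L^2\to L^2}$ does not scale with $C_\Omega$, I would either track it as an additional constant or first prove an a priori $L^2$ bound depending on $C_\Omega$. After removing $\bigcup 2Q$, it remains to bound $\sup_\epsilon|\sum_Q \int_{|x-y|>\epsilon}\Omega(x-y)K(x,y)b_Q(y)\,dy|$ outside the exceptional set.

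Next, decompose the kernel dyadically. Write $\Omega K=\sum_j \Omega(x-y)K(x,y)\phi_j(x-y)$ with $\phi_j$ supported where $|x-y|\sim 2^j$, and group $B_s=\sum_{\ell(Q)=2^{j-s}}b_Q$. The truncation at $\epsilon\in[2^{k},2^{k+1})$ splits into a full sum over scales $j>k+1$ and a single rough annulus piece at scale $\sim 2^k$. Three ingredients come in here.
\begin{itemize}
\item[(i)] For $s\le s_0$, small relative to the cube, the regularity condition \eqref{regularity condition} in $y$ together with cancellation of $b_Q$ yields $L^1$ bounds, as in standard Calder\'on--Zygmund theory. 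The problem is that $\Omega$ is not smooth in $y$. I would therefore use Seeger's microlocal decomposition of $\Omega$ into angular pieces of aperture $2^{-s\varepsilon}$, combined with Taylor expansion in the direction variable.
\item[(ii)] For the remaining terms, derive $L^2$ estimates with decay $2^{-s\varepsilon}$ for the nonsmooth part via a $TT^*$ argument. The key input is that $K(x,\cdot)$ is $\gamma$-H\"older at scale $2^{j-s}$ on annuli of size $2^j$. Consequently, freezing $K(x,y)\approx K(x,c_Q)$ costs a factor $2^{-s\gamma}$, and the frozen kernel, after also freezing in $x$ via the first condition in \eqref{regularity condition}, behaves like the convolution kernel $\Omega(x-y)2^{-jd}$ times a smooth bounded amplitude. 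Lai's convolution estimates then apply with this amplitude.
\item[(iii)] Split $\Omega$ into $\{|\Omega|\le 2^{s\varepsilon}\}$ and its complement. The large part is summed in $L^1$ over $s$, which yields exactly the $L\log L$ quantity $C_\Omega$ defined in \eqref{C_Omega}.
\end{itemize}

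\textbf{Main obstacle.} The hardest step is the supremum over truncations $\epsilon$ at the rough scale, that is, the single-annulus term $\int_{2^k\le|x-y|<\epsilon}\Omega K\, B_s$ with $\epsilon$ varying continuously. In the convolution case Lai handles this with a maximal estimate that exploits radial structure: the truncation $\chi_{|x-y|>\epsilon}$ is radial in $x-y$. Here that structure is broken by $K$.

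My first approach is to write $\Omega(x-y)K(x,y)=\Omega(x-y)|x-y|^{-d}\cdot\big(|x-y|^dK(x,y)\big)$. On the annulus $|x-y|\sim 2^k$ with $y\in Q$, the amplitude $a(x,y)=|x-y|^dK(x,y)$ is bounded. By \eqref{regularity condition} it is $\gamma$-H\"older in $y$ at scale $\ell(Q)$, with error $(\ell(Q)/2^k)^\gamma$. Replacing $a(x,y)$ by $a(x,c_Q)$ gives a term whose error is summable in $s$ with geometric decay, after accounting for the $L\log L$ control of the rough annulus. The main term is $a(x,c_Q)$ times Lai's convolution-type truncated piece. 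Since $|a|\le C_1$ and $a$ depends on $x$ only through a fixed cube center, this factor can be pulled out of the supremum in $\epsilon$ cube by cube. The residual issue is that the $c_Q$ differ across cubes. I would control this by the $x$-regularity of $a$ on sets where $\epsilon$ is comparable to $2^k$, reducing to a square-function version of Lai's estimate for $\sup_\epsilon$ over the sum across cubes. Verifying that Lai's estimate survives bounded, cube-dependent multipliers is the step I expect to be the main difficulty.
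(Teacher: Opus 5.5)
Your reductions match the paper's: the Calder\'on--Zygmund decomposition, the $L^2$ bound for $g$, grouping into $b_{j-s}$, replacing $K$ by a regularized kernel at a cost summable in $s$, the $L\log L$ split of $\Omega$ at height $2^{\varepsilon s}$, and Seeger's microlocalization. However, you have misplaced the difficulty. The continuous truncation is not an obstacle. For a suitable $k=k(\epsilon)$ with $2^k\approx\epsilon$, the difference $\chi_{\{|x-y|>\epsilon\}}-\sum_{j\ge k}\phi_j(x-y)$ is bounded and supported where $|x-y|\approx\epsilon$. There $|K(x,y)|\lesssim 2^{-kd}$ by \eqref{Size condition}, so this piece is dominated pointwise by $M_\Omega f(x)$, using neither cancellation nor radial structure. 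Since $M_\Omega$ is weak $(1,1)$ for $\Omega\in L\log L$, one gets $T_{\Omega,*}f\lesssim M_\Omega f+\sup_k|\sum_{j\ge k}T_jf|$, which is the paper's first reduction. Your proposed treatment of that term would not close anyway: the cube-dependent factors $a(x,c_Q)$ cannot be pulled out of $\sup_\epsilon|\sum_Q a(x,c_Q)F_Q(x,\epsilon)|$, and this is exactly what you leave open. Also, ``freezing in $x$'' has to be replaced by mollification in both variables. The paper uses scale $2^{j-s_\gamma}$ with $2^{s_\gamma}\approx s^{2/\gamma}$, so the error is $O(s^{-2})$ and each derivative costs only $s^{2/\gamma}$. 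With that change, this part of your plan is sound.

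The genuine gap is the step you delegate to ``Lai's convolution estimates with this amplitude'': an $L^2$ bound with decay $2^{-\delta s}$ for the dyadic maximal function $\sup_k|\sum_{j\ge k}T_j^s b_{j-s}|$, restricted to the bounded part of $\Omega$. Seeger's microlocal estimate controls the full sum over $j$, not the supremum over $k$. Since unboundedly many scales $j$ contribute at a point, neither the triangle inequality nor Rademacher--Menshov in $j$ gives anything summable in $s$. The missing idea is the paper's geometric layering, which works as follows.
\begin{enumerate}[(a)]
\item Localize $T_J^sb_{j-s}$ to the $3^d$ subcubes $J^m$ of $3J$.
\item Stack the cubes $J^m$ with $b_{j-s}\chi_J\neq0$ into nested levels $H^i_{s,m}$ of depth $\lceil C_02^{ds}\rceil$.
\item Use $\sum_{J\subseteq A}|J|\lesssim 2^{ds}|A|$ to get $|F^i_{s,m}|\lesssim 3^{-i}\lambda^{-1}C_\Omega\|f\|_1$.
\item Inside one level, apply Rademacher--Menshov (Lemma~\ref{Rademacher-Menshov}) to at most $u_0=\lceil C_02^{ds}\rceil$ layers, losing only $\log(2+u_0)\lesssim s$.
\item Handle the resulting signed sums by Seeger-type $L^1$, $L^2$ and $L^3$ estimates with the smoothed cutoffs $\widetilde\chi_{J^m}$. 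Here $\sum\|b_Q\|_1\lesssim 3^{-i}\|f\|_1$, after $L^1$--$L^3$ interpolation, gives the factor $3^{-i/2}$ needed to sum over levels.
\end{enumerate}
Without this or an equivalent mechanism, your outline does not yield Lemma~\ref{lemma T_{J,2}^s}, which is the real content of the theorem.
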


We now describe three examples of the maximal truncated operator \eqref{maximal truncated operator}.
For each application, one must verify the kernel estimates and the
$L^2$ boundedness of the maximal operator.
The detailed kernel estimates and the proofs of these applications are given in Section~\ref{Section: applications of the main theorem}.

\medskip
\noindent\textbf{Maximal Calder\'on commutator.}
For a Lipschitz function $A$ on $\R^d$, the Calder\'on commutator
\cite{Calderon1965Commutators} is
    \begin{align*} 
	T_{\Omega,A} f(x) 
	= \text{p.v. } \int_{\R^d} \frac{\Omega(x-y)}{|x-y|^d} \cdot \frac{A(x)-A(y)}{|x-y|} \cdot f(y)\dif y.
    \end{align*}
Its maximal truncation is
    \begin{align} \label{maximal Calderon commutator}
	T_{\Omega,A,*} f(x) 
	= \sup_{\epsilon>0} \Big| \int_{|x-y|>\epsilon} \frac{\Omega(x-y)}{|x-y|^d} \cdot \frac{A(x)-A(y)}{|x-y|} \cdot f(y)\dif y\Big|,
    \end{align}
where $\Omega$ satisfies (\ref{homogeneous function of degree zero}), and the following cancellation condition of order 1,
\begin{equation}\label{cancellation condition of order 1}
    \int_{\sS^{d-1}}\Omega(\theta)\theta^\alpha\, \dif \theta=0,
    \quad \text{for all}\ \alpha \in \Z_+^d 
    \ \text{with}\ |\alpha|=1.
\end{equation}
Under this condition and $\Omega\in L\log L(\sS^{d-1})$, the theorem of Baj\v{s}anski and Coifman
\cite{Bajvsanski1967On}, gives
the $L^p$ boundedness of the maximal commutator for $1<p<\infty$.
Here
    \[
    K_A(x,y)=\frac{A(x)-A(y)}{|x-y|^{d+1}}
    \]
satisfies \eqref{Size condition} and \eqref{regularity condition}
with $\gamma=1$ and constants controlled by the Lipschitz seminorm of $A$.
Therefore, Theorem~\ref{main theorem} gives the weak type $(1,1)$ estimate for
$T_{\Omega,A,*}$.

\medskip
\noindent\textbf{Maximal Calder\'on commutator of Baj\v{s}anski--Coifman type.}
Let $l\geq 1$ be an integer.
Assume that $A\in C^{l-1}(\R^d)$ and that its derivatives of order $l-1$
are globally Lipschitz. Thus its weak derivatives of order $l$ are bounded.
Write $A_\alpha=\partial^\alpha A$ and define
    \begin{align} \label{P_l(A,x,y)}
    P_l(A,x,y)
    =A(x)-\sum_{|\alpha|<l}\frac{A_\alpha(y)}{\alpha!}(x-y)^\alpha.
    \end{align}
The corresponding singular integral is
    \begin{align*} 
    T_{\Omega,A,l} f(x) = \mathrm{p.v.} \int_{\mathbb{R}^d} \frac{\Omega(x - y)}{|x - y|^d} \cdot \frac{P_l(A, x, y)}{|x - y|^l} \cdot f(y) \dif y,
    \end{align*}
and its maximal truncation is
   \begin{align} \label{maximal Calderon commutator of B-C type}
    T_{\Omega,A,l,*} f(x) 
	= \sup_{\epsilon>0} \Big| \int_{|x-y|>\epsilon} \frac{\Omega(x-y)}{|x-y|^d} \cdot \frac{P_l(A,x,y)}{|x-y|^l} \cdot f(y)\dif y\Big|.
    \end{align}
When $l=1$, these operators agree with the Calder\'on commutator
and its maximal truncation, respectively.
We assume $\Omega$ satisfies (\ref{homogeneous function of degree zero}) and the cancellation condition of order $l$,
    \begin{align}\label{cancellation condition of order l}
    \int_{\sS^{d-1}} \Omega(\theta) \theta^{\alpha} \dif\theta = 0,
    \quad \text{for all}\ \alpha \in \Z_+^d 
    \ \text{with}\ |\alpha|=l,
    \end{align} 
The theorem of Baj\v{s}anski and Coifman \cite{Bajvsanski1967On}, gives
the $L^p$ boundedness of $T_{\Omega,A,l,*}$ for $1<p<\infty$
under this condition and $\Omega\in L\log L(\sS^{d-1})$.
The kernel
    \[
    K_{A,l}(x,y)=\frac{P_l(A,x,y)}{|x-y|^{d+l}}
    \]
satisfies \eqref{Size condition} and \eqref{regularity condition}
with $\gamma=1$.
These assumptions allow us to apply Theorem~\ref{main theorem} to $T_{\Omega,A,l,*}$.

\medskip
\noindent\textbf{Maximal singular integral of Muckenhoupt type.}
Muckenhoupt \cite{Muckenhoupt1960On} studied singular integrals with an imaginary radial exponent.
For $r\in\R\setminus\{0\}$, define
    \begin{equation*} 
    T_{\Omega,ir} f(x) = \mathrm{p. v.} \int_{\mathbb{R}^d} \frac{\Omega(x-y)}{|x-y|^{d+ir}} \cdot f(y) \dif y.
    \end{equation*}
For $\Omega\in L^1(\sS^{d-1})$ satisfying
\eqref{homogeneous function of degree zero} and \eqref{cancellation condition}.
The maximal truncation is
    \begin{align} \label{maximal general singular integral of Muckenhoupt type}
	T_{\Omega,ir,*} f(x) = \sup_{\epsilon>0} \Big| \int_{|x-y|>\epsilon} \frac{\Omega(x-y)}{|x-y|^{d+ir}} \cdot f(y) \dif y \Big|.
    \end{align}
It is bounded on $L^p(\R^d)$ for $1<p<\infty$ by
\cite[Theorem~8]{Muckenhoupt1960On}.
The kernel 
    \[
    K_r(x,y)= \frac{1}{ |x-y|^{d+ir} }
    \]
satisfies
\eqref{Size condition} and \eqref{regularity condition} with $\gamma=1$.
Thus, if $\Omega\in L\log L(\sS^{d-1})$,
Theorem~\ref{main theorem} gives a weak type $(1,1)$ estimate for
$T_{\Omega,ir,*}$ for each fixed $r\neq 0$.

\medskip
This paper is organized as follows.
In Section~\ref{Section: Proof of the main theorem: reductions}, we introduce the maximal dyadic truncated operator, apply the Calder\'on-Zygmund decomposition, smooth the kernel, and perform a further dyadic decomposition, thereby reducing Theorem~\ref{main theorem} to the core square function estimate for the bad function (Lemma~\ref{lemma T_{J,2}^s}).
Section~\ref{Section: proof of lemma T_{J,2}^s} proves Lemma~\ref{lemma T_{J,2}^s} via a geometric decomposition of dyadic cubes into nested layers. 
This reduces the problem to the layerwise estimate in Lemma~\ref{lemma (T_{J_2}^s b_{j-s}) chi_{J^m}}.
In Section~\ref{Section: Proof of Lemma {lemma (T_{J_2}^s b_{j-s}) chi_{J^m}}}, we prove this layerwise estimate. 
We first linearize the maximal operator via the Rademacher-Menshov theorem, reducing it to Lemma~\ref{epsilon_u beta_u}. The proof of Lemma~\ref{epsilon_u beta_u} is then carried out using smoothing and microlocal decomposition, which further reduces the problem to four technical estimates \eqref{estimate for I_1}-\eqref{estimate for I_4}. These estimates are proved in Section~\ref{Section: Proof of estimates I_1-I_4}.
Section~\ref{Section: applications of the main theorem} discusses
the applications described above.
Finally, two classical results - the H\"ormander-Mihlin multiplier theorem and the Rademacher-Menshov theorem - are stated in Appendix \ref{Appendix} for the reader's convenience.

We end this section with notation used throughout the paper.
\begin{itemize}
 \item $\mD_j$ denotes the collection of standard dyadic cubes of side length $2^j$.
 \item $\mD=\bigcup_{j\in\Z}\mD_j$ denotes the standard dyadic grid.
 \item $Q_{x,n}$ is the unique cube in $\mD_n$ containing $x\in\R^d$.
 \item $c(Q)$ and $\ell(Q)$ denote the center and side length of a cube $Q$.
 \item For $t>0$, $tQ$ is the cube with center $c(Q)$ and side length $t\ell(Q)$.
 \item $B_r(x)$ is the open ball of radius $r>0$ centered at $x$; we write $B_r=B_r(0)$.
 \item $|E|$ is the Lebesgue measure of a measurable set $E\subset\R^d$.
 \item $\#E$ is the cardinality of a finite set $E$.
 \item $\lfloor a\rfloor$ and $\lceil a\rceil$ denote the floor and ceiling of $a$.
 \item The Fourier transform and its inverse are
 \[
  \mathcal Ff(\xi)=\int_{\R^d}e^{-i\langle x,\xi\rangle}f(x)\,dx,
  \qquad
  \mathcal F^{-1}g(x)=(2\pi)^{-d}\int_{\R^d}e^{i\langle x,\xi\rangle}g(\xi)\,d\xi.
 \]
 \item $\|\Omega\|_\infty=\|\Omega\|_{L^\infty(\sS^{d-1})}$ and
 $\|\Omega\|_1=\|\Omega\|_{L^1(\sS^{d-1})}$.
\end{itemize}

\medskip
\section{Proof of the Theorem \ref{main theorem}: reductions} \label{Section: Proof of the main theorem: reductions}

This section reduces Theorem~\ref{main theorem} to a core square function estimate.  
We first introduce a maximal dyadic truncated operator $T_*$ and show that the weak type $(1,1)$ bound for $T_{\Omega,*}$ follows from the same bound for $T_*$. 
Then we apply the Calder\'on-Zygmund decomposition to split the function into a good function (easy to handle) and a bad function.  
For the bad function, we smooth the operator and perform a further dyadic decomposition, which finally leads to a core square function estimate.  
Once this estimate is established (in the next section), the proof of the main theorem will be complete.

\medskip
\subsection{Reduction to the dyadic maximal operator}

We first introduce a maximal dyadic truncated operator $T_*$ and show that it suffices to prove the weak type $(1,1)$ bound for $T_*$.

Let $\phi$ be a nonnegative radial smooth function supported in $\{x\in\R^d:2^{-4}<|x|<2^{-2}\}$ such that $\sum\limits_{j\geq 1} \phi(2^{-j}x) =1$, for $x\in \R^d\backslash\{0\}$.
Set $\phi_j(x)= \phi(2^{-j}x)$. 
Define the dyadic kernel
    \begin{align*}
    K_j(x,y) = \phi_j(x-y)K(x,y), \quad j\in\Z.
    \end{align*}
The corresponding dyadic operator is given by
    \begin{align*}
	T_j f(x) = \int_{\R^d} \Omega(x-y) K_j(x,y) f(y) \dif y.
    \end{align*}    
We also introduce two maximal operators:
    \[
    M_\Omega f(x)=\sup_{r>0} r^{-d}\int_{|x-y|<r}|\Omega(x-y)f(y)|\,dy,
    \]
    \[
    T_* f(x)=\sup_{k\in\mathbb{Z}}\Bigl|\sum_{j\ge k} T_j f(x)\Bigr|.
    \]
A direct estimate gives the pointwise control:
    \begin{align*} 
    T_*f(x)\lesssim M_\Omega f(x) + T_{\Omega,*}f(x), 
    \quad
    T_{\Omega,*}f(x)\lesssim M_\Omega f(x)+T_*f(x),
    \end{align*}   
which allows us to reduce the study of $T_{\Omega,*}$ to that of $T_*$.

Since \(\Omega\in L^1(\mathbb S^{d-1})\), the operator \(M_\Omega\) is bounded on \(L^2(\mathbb R^d)\). By the \(L^2\) boundedness assumption on \(T_{\Omega,*}\) in Theorem~\ref{main theorem}, the first estimate implies that \(T_*\) is also bounded on \(L^2(\mathbb R^d)\).
Moreover, \(M_\Omega\) is of weak type \((1,1)\) when \(\Omega\in L\log L(\mathbb S^{d-1})\) (see \cite{Christ1988Weaktype}). The second estimate therefore reduces the proof of Theorem~\ref{main theorem} to a weak type \((1,1)\) estimate for \(T_*\).

Thus, to prove Theorem \ref{main theorem}, it suffices to establish the following proposition. 
    \begin{proposition} \label{estimate for T*}
	The maximal operator $T_{*}$ is of weak type $(1,1)$, that is, for any $f\in L^1(\R^d)$ and $\lambda>0$, 
 	\begin{align*} 
     \lambda \big| \{x\in\R^d: T_*f(x)>\lambda \} \big| \lesssim C_{\Omega} \|f\|_{L^1(\R^d)}.	
     \end{align*}
 	where $C_{\Omega}$ is a finite constant defined as in (\ref{C_Omega}).
    \end{proposition}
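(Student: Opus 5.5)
We will follow the scheme of \cite{Lai2025weak} and Seeger \cite{Seeger1996Singular}, adapting each step to the non-convolution kernel $K$. By homogeneity in $\Omega$ we normalize $\|\Omega\|_1=1$. Fix $f\in L^1$ and $\lambda>0$, and perform the Calder\'on--Zygmund decomposition of $f$ at height $\lambda$: $f=g+\sum_{Q}b_Q$. Here $\|g\|_\infty\lesssim\lambda$ and $\|g\|_1\le\|f\|_1$. The cubes $Q$ are disjoint dyadic cubes with $\sum|Q|\lesssim\lambda^{-1}\|f\|_1$, each $b_Q$ is supported in $Q$, $\int b_Q=0$, and $\|b_Q\|_1\lesssim\lambda|Q|$. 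Group $b_s=\sum_{Q\in\mD_s}b_Q$.

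\textbf{Good part and exceptional set.} For the good part we use the $L^2$ boundedness of $T_*$ established above (via $M_\Omega$ and the hypothesis on $T_{\Omega,*}$). Chebyshev then gives $|\{T_*g>\lambda/3\}|\lesssim\lambda^{-2}\|g\|_2^2\lesssim\lambda^{-1}\|f\|_1$. Remove $E=\bigcup 2^{10}Q$, which has admissible measure.

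\textbf{Bad part: reduction to $s\ge 1$.} Off $E$, write $\sum_{j\ge k}T_jb=\sum_{s}\sum_{j\ge k}T_jb_{j-s}$. Because of the support of $\phi_j$ and the removal of $E$, only terms with $s\ge 1$ survive (roughly $s\gtrsim 10$). For each $s$ we split $T_j=T_j^{s}+(T_j-T_j^{s})$. Here $T_j^s$ is obtained by mollifying $\Omega(x-y)\phi_j(x-y)$ in $x-y$ at scale $2^{j-\epsilon s}$ while keeping the factor $K(x,y)$. The error $T_j-T_j^s$, together with the part coming from $|\Omega|>2^{\epsilon s}$, will be handled in $L^1$.
\begin{itemize}
\item The large part of $\Omega$ is summed over $s$ using $\sum_s\int_{|\Omega|>2^{\epsilon s}}|\Omega|\lesssim\|\Omega\|_{L\log L}$, which is exactly where $C_\Omega$ enters.
\item For the mollification error we use the cancellation of $b_Q$ and the $y$-regularity \eqref{regularity condition} of $K$. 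This gives a gain $2^{-\gamma s}$ from the $y$-H\"older estimate.
\end{itemize}
Since $\sup_k|\sum_{j\ge k}\cdot|\le\sum_j|\cdot|$ for these absolutely summable pieces, the supremum is harmless there.

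\textbf{Main obstacle: the smoothed maximal operator.} What remains is the bound $\|\sup_k|\sum_{j\ge k}T_j^sb_{j-s}|\|_{L^2}^2\lesssim 2^{-\delta s}\lambda\|f\|_1$, which is the square-function estimate the section announces. We expect this to be the hardest step, since for the maximal operator one cannot simply take the $L^2$ norm of the sum.

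\emph{Linearization.} Here we follow \cite{Lai2025weak}: the Rademacher--Menshov theorem bounds the maximal partial sums by a $\log$ factor times square sums of dyadic blocks. We then decompose the cubes geometrically into layers so that the $\log$ loss is only polynomial in $s$ and is absorbed by $2^{-\delta s}$.

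\emph{Estimating a single block.} For each block, we freeze $x$ in $K(x,y)$ at the center of a cube of scale $2^{j}$, using the $x$-regularity \eqref{regularity condition}, with error $2^{-\gamma(\cdot)}$. This reduces the block to a convolution-type operator in which Seeger's microlocal decomposition in the angular variable applies, with a Fourier decay gain $2^{-\delta s}$. The orthogonality ($TT^*$) estimates are then performed with $K$ frozen, and the size and regularity of $K$ replace the explicit kernel $|x|^{-d}$ throughout.

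Finally, summing over $s$ the $L^2$ bounds and the $L^1$ bounds, and applying Chebyshev, yields the weak type estimate with constant $C_\Omega$.
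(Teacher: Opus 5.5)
Your overall architecture matches the paper's: the Calder\'on--Zygmund decomposition, splitting $\Omega$ at height $2^{\iota s}\|\Omega\|_1$ and summing the large part via $L\log L$, Rademacher--Menshov applied to layers of nested cubes, and Seeger's microlocal decomposition. But the smoothing step mollifies the wrong factor, and the $L^1$ bound you claim for its error is false. You mollify the rough factor $\Omega(x-y)\phi_j(x-y)$ at scale $2^{j-\epsilon s}$, so the error kernel is $E_j(x-y)K(x,y)$ with $E_j=\Omega\phi_j-(\Omega\phi_j)*\eta_{j-\epsilon s}$.

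After using $\int b_Q=0$, the piece $E_j(x-c(Q))\,[K(x,y)-K(x,c(Q))]$ does gain $2^{-\gamma s}$ from the $y$-regularity of $K$. The other piece $[E_j(x-y)-E_j(x-c(Q))]\,K(x,y)$, however, requires the $L^1$ modulus of continuity of $\Omega$ at angular scale $2^{-s}$. Without cancellation you would instead need $\|E_j\|_{L^1}\ll 2^{jd}\|\Omega\|_1$, i.e. the modulus of continuity at scale $2^{-\epsilon s}$. For a general $\Omega\in L\log L(\mathbb{S}^{d-1})$ neither quantity decays at any rate, let alone summably in $s$ with a bound by $C_\Omega$.

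Your scheme is also internally inconsistent. Your $T_j^s$ has a kernel that is smooth in $x-y$ at scale $2^{j-\epsilon s}$ and H\"older in $y$. The same cancellation argument then gives $\|T_j^s b_Q\|_{L^1}\lesssim(2^{-(1-\epsilon)s}+2^{-\gamma s})\|\Omega\|_1\|b_Q\|_{L^1}$. So your ``main obstacle'' would be trivial in $L^1$, and no $L^2$, maximal, or microlocal argument would be needed. All the difficulty has been hidden in the error term you called easy.

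The paper never smooths $\Omega$. It mollifies only the regular factor $K$, in both variables, at the mild scale $2^{j-s_\gamma}$ with $2^{s_\gamma}\approx s^{2/\gamma}$: $K_j^s(x,y)=\iint\eta_{j-s_\gamma}(x-u)\eta_{j-s_\gamma}(y-v)K_j(u,v)\,du\,dv$. The H\"older conditions in both variables, together with the smoothness of $\phi_j$, give pointwise $|K_j-K_j^s|\lesssim s^{-2}2^{-jd}$ on the annulus $|x-y|\sim 2^j$. Hence $\|(T_j-T_j^s)b_{j-s}\|_{L^1}\lesssim s^{-2}\|\Omega\|_1\|b_{j-s}\|_{L^1}$, with no cancellation and no regularity of $\Omega$ used, and this is summable in $s$. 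The price is that each derivative of $K_j^s$ costs $s^{2/\gamma}2^{-j}$, a polynomial loss absorbed by $2^{-\delta s}$.

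This smoothness of $K_j^s$ in its first variable is exactly what makes the radial integration by parts in the $(I-G_v^s)$ term possible. Your substitute, freezing $x$ at the center of a cube of side $2^j$, gives no gain: there $|x-x_J|\sim 2^j\sim|x-y|$, so the $x$-regularity condition does not even apply. One would have to freeze at the much finer scale $2^{j-s_\gamma}$ with a smooth partition of unity, which is essentially the paper's mollification in $x$.

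You also need to smooth the sharp cutoffs $\chi_{J^m}$ produced by the layer decomposition, since the same radial integration by parts differentiates them. The paper does this with $\widetilde\chi_{J^m}=P_{j-s\kappa}\chi_{J^m}$ and controls the resulting error as the term $I_1$.
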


\medskip
\subsection{Calder\'on-Zygmund decomposition}
To handle \(T_*\), we apply the Calder\'on-Zygmund decomposition to the function \(f\).

Fix $f\in L^1(\R^d)$ and $\lambda>0$. 
We perform a Calder\'on-Zygmund decomposition of $f$ at height $\lambda/C_{\Omega}$ with respect to the dyadic grid $\mD$.
This yields a collection of dyadic cubes $\mQ \subseteq \mD$ and a decomposition $ f = g + b $ satisfying the following properties:
    \begin{flalign*}
    &\text{(cz-i)} && \quad \|g\|_{L^{\infty}(\R^d)} \lesssim \lambda / C_{\Omega}, \quad \|g\|_{L^1(\mathbb{R}^d)} \lesssim \|f\|_{L^1(\mathbb{R}^d)}, \\
    &\text{(cz-ii)} && b = \sum_{Q \in \mathcal{Q}} b_Q,\quad \int_Q b_Q(x) \, dx = 0,\quad \operatorname{supp}(b_Q) \subseteq Q; &\\
    &\text{(cz-iii)} && \|b_Q\|_{L^1(\mathbb{R}^d)} \lesssim \langle f \rangle_Q |Q| \quad \text{for all}\ Q \in \mathcal{Q}, \quad \text{where}\ \langle f \rangle_Q \approx \lambda / C_{\Omega}; &\\
    &\text{(cz-iv)} && \text{All dyadic cubes in } \mathcal{Q} \text{ are disjoint}; &\\
    &\text{(cz-v)} && \Big| \bigcup_{Q \in \mathcal{Q}} Q \Big| \lesssim \frac{C_{\Omega}}{\lambda} \|f\|_{L^1(\mathbb{R}^d)}. &
    \end{flalign*}
From $f=g+b$, we obtain
    \begin{align*}
    \big| \{x\in\R^d: T_*f(x)>\lambda \} \big| 
 	\leq \big| \{x\in\R^d: T_* g(x)>\lambda/2 \} \big| 
 	+ \big| \{x\in\R^d: T_* b(x)>\lambda/2 \} \big|, 
    \end{align*}
so proving Proposition \ref{estimate for T*} splits into estimating the good function and the bad function.

For the good function $g$, using the Chebyshev inequality, the $L^2$ boundedness of $T_*$ (with norm $C \|\Omega\|_{L\log L(\sS^{d-1})}$), and (cz-i), we get
    \begin{align*}
	\big| \{x\in\R^d: T_* g(x)>\lambda/2 \} \big| 
	&\lesssim \lambda^{-2} \|T_* g\|_{L^2(\R^d)}^2
	\lesssim \lambda^{-2} \|\Omega\|_{L\log L(\sS^{d-1})}^2 \|g\|_{L^2(\R^d)}^2  \\
	&\lesssim \lambda^{-1} C_{\Omega} \|f\|_{L^1(\R^d)}.
    \end{align*}
Thus, it remains to consider the bad function.

Set $E = \bigcup\limits_{Q \in \mathcal{Q}} Q$ and  $E^* = \bigcup\limits_{Q \in \mathcal{Q}} 2^{300} Q$ (the expanded union).
We first reduce to the estimate outside $E^*$.
Notice that
    \begin{align*}
	|\{x\in\R^d: T_* b(x)>\lambda/2 \}| 
	\lesssim |E^*| 
	+ \big|\{x\in (E^*)^c: T_* b(x)>\lambda/2 \} \big|. 
    \end{align*}
From (cz-iv) and (cz-v), we have $|E^*| \lesssim |E| \lesssim \lambda^{-1} C_{\Omega} \|f\|_{L^1(\R^d)}$.
Therefore, to establish the estimate for the bad function, we only need to prove
    \begin{align} \label{estimate for T_* b}
	\big| \{x\in (E^*)^c: T_* b(x)>\lambda/2 \} \big| 
	\lesssim \lambda^{-1}C_{\Omega} \|f\|_{L^1(\R^d)},
    \end{align}
which requires a more involved process.
We first make some preparations in the next subsection, and then continue to work with it.

\medskip
\subsection{Preparations for estimating the bad function}
To prove \eqref{estimate for T_* b}, we perform two preparatory steps: first, a smoothing convolution, and then a further dyadic decomposition that localizes the operators.

\medskip
\noindent\textbf{Step 1: Smoothing the dyadic kernel.}
Let $\eta\in C_c^{\infty}$ be a nonnegative, radial function with $\int_{\R^d} \eta =1$, $\supp\eta \subset \{x\in\R^d: |x|<1 \}$, and $\sum\limits_{j\geq 1} 2^{-jd} \eta(2^{-j} x) =1$.
Set $\eta_j := 2^{-jd} \eta(2^{-j} x)$.
For a large integer $s\ge200$, define
    \begin{align*}
    s_{\gamma} = \lfloor 2\gamma^{-1}\log_2 s\rfloor+10,
    \end{align*}
where $0<\gamma<1$ is a small constant to be chosen later.
Immediately, we see
    \begin{align} \label{2^{s_{gamma}}}
    2^{s_{\gamma}} \approx s^{2 \gamma^{-1}}.
    \end{align}
Now, define the smoothed dyadic kernel
    \begin{align*} 
    K_j^s(x,y) = \int_{\R^d} \int_{\R^d} \eta_{j-s_{\gamma}}(x-u) \eta_{j-s_{\gamma}}(y-v) K_j(u,v) \dif u \dif v,
    \end{align*}
and the corresponding smoothed operator   
    \begin{align} \label{smoothed operator T_j^s}
    T_j^s h(x) = \int_{\R^d} \Omega(x-y) K_j^s(x,y) h(y) \dif y.
    \end{align}

A useful estimate for $K_j^s$ is
    \begin{align} \label{estimate for K_j^s(x,y)}
    |K_j^s(x,y)| \lesssim 2^{-jd} \chi_{ \{2^{j-5}\leq |x-y| \leq 2^{j-1}\} }.
    \end{align}
Indeed, if $K_j(u,v)\neq0$, then the support of $\phi_j$ gives $2^{j-4}<|u-v|<2^{j-2}$. 
If additionally $\eta_{j-s_\gamma}(x-u)\neq0$ and $\eta_{j-s_\gamma}(y-v)\neq0$, then $|x-u|,|y-v|<2^{j-s_\gamma}\le2^{j-10}$. 
Consequently,
    \begin{align*}
    &|x-y| \leq |u-v| + |x-u| + |v-y| \leq 2^{j-1}, \\
    &|x-y| \geq |u-v| - |x-u| - |v-y| \geq 2^{j-5},
    \end{align*}
so $(x,y)$ lies in the stated annulus. 
Moreover, by the size condition (\ref{Size condition}) and the support of $\phi_j$, 
    \begin{align} \label{|K_j(u,v)|}
    |K_j(u,v)| \leq |u-v|^{-d} \chi_{ \{2^{j-4} < |u-v| < 2^{j-2} \} }
               \lesssim 2^{-jd}.
    \end{align} 
Using the above estimate and integrating over $u,v$ gives (\ref{estimate for K_j^s(x,y)}).

\medskip
\noindent\textbf{Step 2: Further dyadic decomposition.}
For each $j\in\Z$, let $\mD_j$ be the collection of dyadic cubes of side length $2^j$:
    \begin{align*}
    \mD_j = \big\{2^j\cdot [m_1,(m_1+1))\times \cdots \times [m_d,(m_d+1)): \ (m_1, \cdots, m_d)\in\Z^d \big\},
    \end{align*}
and set
    \begin{align*}
    \mD = \bigcup_{j\in\Z} \mD_j.
    \end{align*}
Fix a $j\in\Z$.
For a dyadic cube $J\in\mD_j$, define
    \begin{align*}
    T_J^s h(x)
    = \int_{\R^d} \Omega(x-y) K_j^s(x,y) h(y) \chi_J(y) \dif y.
    \end{align*}
Using $\sum\limits_{J\in\mD_j} \chi_J = 1$, we obtain the dyadic decomposition of the smoothed operator $T_j^s$:
    \begin{align} \label{T_j^s = sum T_J^s}
    T_j^s h = \sum_{J\in\mD_j} T_J^s h.
    \end{align}
From the support property of $K_j^s$ (see (\ref{|K_j(u,v)|})) and the fact that $y\in J$, one easily deduces that 
    \begin{align} \label{supp T_J^s h}
    \supp \big(T_J^s h \big) \subseteq 3J,
    \end{align}
i.e., the support of $T_J^s h$ is contained in the cube concentric with $J$ but three times larger.

With the two steps above, we are ready to estimate \eqref{estimate for T_* b}.

\medskip
\subsection{Estimate for the bad function I: Reduction to the smoothed operator}
Applying the smoothed operator \(T_j^s\) constructed as in (\ref{smoothed operator T_j^s}), we can split \(T_*b\) into a non-smooth part and a smooth part.
We then prove the estimate for the non-smooth part; the smooth part will be handled in the next subsection.

First, for each $s\in\mathbb{Z}$, let $\mathcal{Q}_s=\{Q\in\mathcal{Q}:\ell(Q)=2^s\}$ and define $b_s = \sum\limits_{Q\in\mQ_s} b_Q$. 
Applying (cz-iv), (cz-iii), (cz-v), we obtain
    \begin{align}\label{sum_s ||b_s||_1}
    \sum_{s\in\Z}  \|b_s\|_{L^1(\R^d)} 
    = \sum_{s\in\Z} \sum_{Q\in\mQ} \|b_Q\|_{L^1(\R^d)}
    \lesssim \lambda C_{\Omega}^{-1} \sum_{s\in\Z} \sum_{Q\in\mQ} |Q|
    \lesssim \|f\|_{L^1(\R^d)}.
    \end{align}  
Otherwise, a key observation is the following vanishing property:
    \begin{align} \label{T_j b_{j-s} (x) = 0}
	T_j b_{j-s} (x) = 0, \quad \text{for}\ x\in (E^*)^c, \ s<200.
    \end{align}
Indeed, recall $T_j b_{j-s}(x) = \sum\limits_{Q\in\mQ_{j-s}} \int_{Q} K_j(x,y) b_Q(y) \dif y$.
Since \(K_j(x,y)=0\) unless \(2^{j-4}<|x-y|<2^{j-2}\), it suffices to show that for every \(Q\in\mathcal{Q}_{j-s}\) and every \(y\in Q\), we have \(|x-y|\ge 2^{j-2}\) for \(x\in(E^*)^c\). 
To prove this, by the definition of the distance between two sets,
    \begin{align*}
	|x-y| \geq \dist\big((E^*)^c, Q \big)
	      \geq \dist\big((2^{300}Q)^c, Q \big)
	      = \big((2^{300}-1)/2 \big) \ell(Q)
	      \geq 2^{298+j-s}
	      >2^{j-2},
    \end{align*}
where the second inequality follows from $E^* \supset 2^{300}Q$.
Hence (\ref{T_j b_{j-s} (x) = 0}) holds.

Now, using the decomposition \(b=\sum\limits_{s\in\Z}b_s\) (from (cz-ii) and the definition \(b_s=\sum\limits_{Q\in\mathcal{Q}_s}b_Q\)), the vanishing property \eqref{T_j b_{j-s} (x) = 0}, and the splitting \(T_j=(T_j-T_j^s)+T_j^s\), we obtain that for \(x\in (E^*)^c\),
    \begin{align*}
    T_* b (x)
    &= \sup_{k\in\Z} \Big|\sum_{j\geq k} \sum_{s\in\Z}T_j b_{j-s} (x) \Big|
    \leq \sum_{s\geq 200} \sup_{k\in\Z} \Big|\sum_{j\geq k} T_j b_{j-s} (x) \Big| \\
    &\leq \sum_{s\geq 200} \sup_{k\in\Z} \Big|\sum_{j\geq k} \big(T_j - T_j^s\big) b_{j-s} (x) \Big| 
    + \sum_{s\geq 200} \sup_{k\in\Z} \Big|\sum_{j\geq k} T_j^s b_{j-s} (x) \Big|.
    \end{align*}    
Therefore, (\ref{estimate for T_* b}) follows once we establish the two lemmas below.
    \begin{lemma} \label{T_j - T_j^s}
    With the above notation,
    \begin{align*}
	\Big|\{x\in (E^*)^c: \sum_{s\geq 200} \sup_{k\in\Z} \Big|\sum_{j\geq k} \big(T_j - T_j^s\big) b_{j-s} (x) \Big| >\lambda/4 \} \Big| 
	\lesssim \lambda^{-1}C_{\Omega} \|f\|_{L^1(\R^d)}.
	\end{align*}
    \end{lemma}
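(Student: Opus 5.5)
Since the supremum over $k$ is dominated by the sum of absolute values, I will bound the quantity pointwise by $\sum_{s\ge 200}\sum_{j}|(T_j-T_j^s)b_{j-s}(x)|$. By Chebyshev's inequality, it then suffices to prove the $L^1$ bound
\begin{align*}
\sum_{s\geq 200}\sum_{j\in\Z}\big\|(T_j-T_j^s)b_{j-s}\big\|_{L^1(\R^d)}\lesssim C_\Omega\|f\|_{L^1(\R^d)}.
\end{align*}
By \eqref{sum_s ||b_s||_1}, this in turn reduces to the single-cube estimate
\begin{align*}
\big\|(T_j-T_j^s)b_Q\big\|_{L^1}\lesssim 2^{-\epsilon s}\,C'_\Omega\,\|b_Q\|_{L^1},\qquad Q\in\mQ_{j-s},
\end{align*}
with a factor summable in $s$. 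In fact it is enough to have this for an atom $\delta_{y}$ placed in $Q$. By duality, I then need, uniformly in $y$,
\begin{align*}
\int_{\R^d}|\Omega(x-y)|\,|K_j(x,y)-K_j^s(x,y)|\,dx\lesssim 2^{-\epsilon s}\|\Omega\|_1,
\end{align*}
and the $x$-integral is naturally weighted by $|\Omega(x-y)|$.

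The difference of kernels splits into two smoothing steps:
\begin{align*}
K_j-K_j^s=\big[K_j(x,y)-\textstyle\int\eta_{j-s_\gamma}(x-u)K_j(u,y)\,du\big]+\big[\text{the analogous difference in } v\big].
\end{align*}
Both $\phi_j$ and $K$ are Hölder in each variable at scale $2^j$. By \eqref{regularity condition} and the smoothness of $\phi$, for $|x-u|<2^{j-s_\gamma}$ we get
\begin{align*}
|K_j(x,y)-K_j(u,y)|\lesssim 2^{-jd}2^{-\gamma s_\gamma},
\end{align*}
and similarly in $y$. Since $2^{\gamma s_\gamma}\approx s^2$ by \eqref{2^{s_{gamma}}}, each difference is at most $s^{-2}2^{-jd}$ on the annulus $|x-y|\approx 2^j$. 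Integrating against $|\Omega(x-y)|$ over that annulus gives $\lesssim s^{-2}\|\Omega\|_1$, which is summable in $s\ge200$. This is exactly why $s_\gamma$ was chosen of size $2\gamma^{-1}\log_2 s$: a logarithmic smoothing scale already gives $s^{-2}$ decay.

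The difficulty is that $\Omega$ is evaluated at $x-y$ and is \emph{not} smoothed in $T_j^s$, while $b_Q$ has mean zero at a much smaller scale $2^{j-s}$. So I should not try to exploit the cancellation of $b$ at all. The estimate above is purely size-based, uses only $\|b_Q\|_1$, and therefore avoids any need for regularity of $\Omega$. The one point to check carefully is that the $v$-smoothing term keeps $\Omega(x-y)$ fixed while moving $K_j(\cdot,y)$ to $K_j(\cdot,v)$ with $|y-v|<2^{j-s_\gamma}$. The condition $|x-y|\ge 2|y-v|$ holds on the support, so \eqref{regularity condition} applies in the second variable. The same goes for the Lipschitz variation of $\phi_j$, whose modulus is $2^{-s_\gamma}\le 2^{-\gamma s_\gamma}$.

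The main obstacle is bookkeeping rather than substance. I need the two-step telescoping to be arranged so that every term has $\Omega(x-y)$ with the original $x,y$; this holds because $\Omega$ sits outside the double integral defining $K_j^s$. I also need the resulting bound to be $\|\Omega\|_1\sum_s s^{-2}\lesssim C_\Omega$. Combining these, the left side of the lemma is at most
\begin{align*}
\lambda^{-1}\|\Omega\|_1\sum_s s^{-2}\sum_{j}\|b_{j-s}\|_1\lesssim\lambda^{-1}\|\Omega\|_1\|f\|_1\cdot\lambda C_\Omega^{-1}\cdot C_\Omega/\lambda.
\end{align*}
Here I track that $\|b\|_1$ carries the normalization from the decomposition at height $\lambda/C_\Omega$, and $\|\Omega\|_1\le C_\Omega$. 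This yields the claimed bound $\lambda^{-1}C_\Omega\|f\|_1$.
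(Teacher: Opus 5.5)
Your proposal is correct and follows the paper's proof. Both arguments bound $|K_j-K_j^s|\lesssim s^{-2}2^{-jd}$ on the annulus $2^{j-5}\le|x-y|\le 2^{j-1}$ using only \eqref{regularity condition}, the Lipschitz bound for $\phi_j$ and $2^{-\gamma s_\gamma}\lesssim s^{-2}$; then get $\|(T_j-T_j^s)b_{j-s}\|_{L^1}\lesssim s^{-2}\|\Omega\|_1\|b_{j-s}\|_{L^1}$ without the cancellation of $b$, and finish with Chebyshev and \eqref{sum_s ||b_s||_1}. The only cosmetic difference is that you telescope the two convolutions separately, while the paper bounds $\sup_{u,v}|K_j(x,y)-K_j(x-u,y-v)|$ and splits it into a $y$-difference and an $x$-difference, which is the same computation.
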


	\begin{lemma} \label{T_j^s}
	 With the above notation,
	\begin{align} \label{equality T_j^s}
	\Big|\{x\in (E^*)^c: \sum_{s\geq 200} \sup_{k\in\Z} \Big|\sum_{j\geq k} T_j^s b_{j-s} (x) \Big| >\lambda/4 \} \Big| 
	\lesssim \lambda^{-1}C_{\Omega} \|f\|_{L^1(\R^d)}.
	\end{align}
	\end{lemma}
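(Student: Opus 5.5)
The plan is to prove Lemma~\ref{T_j^s} by Chebyshev's inequality, separately for each $s\geq 200$, after splitting $\Omega$ according to the size of its values. Fix $s$. Write $\Omega=\Omega_s^{(1)}+\Omega_s^{(2)}$, where
\[
\Omega_s^{(1)}=\Omega\,\chi_{\{|\Omega|\le 2^{\epsilon s}\|\Omega\|_1\}}
\]
for a small $\epsilon>0$ to be fixed later. Let $T_j^{s,(i)}$ be $T_j^s$ with $\Omega$ replaced by $\Omega_s^{(i)}$.

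\textbf{The large part.} For $\Omega_s^{(2)}$ I will not use cancellation at all. By \eqref{estimate for K_j^s(x,y)}, for each $y$,
\[
\int |\Omega_s^{(2)}(x-y)|\,|K_j^s(x,y)|\,\dif x\lesssim \|\Omega_s^{(2)}\|_1 .
\]
The sum over $j\ge k$ is dominated by the sum over all $j$, and each $b_{j-s}$ is used once. Hence
\[
\Big\|\sup_k\Big|\sum_{j\ge k}T_j^{s,(2)}b_{j-s}\Big|\Big\|_{L^1}\lesssim \|\Omega_s^{(2)}\|_1\sum_j\|b_{j-s}\|_1 .
\]
Summing over $s$ and interchanging the sums, $\sum_s\|\Omega_s^{(2)}\|_1\chi$ counts the $s$ with $2^{\epsilon s}<|\Omega|/\|\Omega\|_1$. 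Pointwise on the sphere this gives at most $\epsilon^{-1}\log^+(|\Omega|/\|\Omega\|_1)$. Together with \eqref{sum_s ||b_s||_1}, the contribution is $\lesssim \lambda^{-1}C_\Omega\|f\|_1$ by \eqref{C_Omega}. One detail: the sum over $s$ and over $j$ must be organised so that each cube $Q$ is charged only once per $s$. This works because $b_{j-s}$ ranges over all $b_t$ as $j$ varies.

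\textbf{The bounded part.} For $\Omega_s^{(1)}$ I aim for an $L^2$ bound with decay in $s$:
\[
\Big\|\chi_{(E^*)^c}\sup_k\Big|\sum_{j\ge k}T_j^{s,(1)}b_{j-s}\Big|\Big\|_{L^2}^2\lesssim 2^{-\delta s}\,\|\Omega\|_1^2\,\frac{\lambda}{C_\Omega}\sum_j\|b_{j-s}\|_1 .
\]
This is enough: summing the square roots with Chebyshev over $s\ge200$ gives the claim, since $\|\Omega\|_1\le C_\Omega$.

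To prove it, use \eqref{T_j^s = sum T_J^s} and \eqref{supp T_J^s h}, so that $T_j^sb_{j-s}=\sum_{J\in\mD_j}T_J^sb_{j-s}\chi_J$ is localised to $3J$. Then decompose the cubes $J$ geometrically into nested layers according to their position relative to the cubes of $\mQ$. In each layer, at a given point $x$ only $O(s)$ scales $j$ can be active. The reason is that $x\notin E^*$ while $J$ must meet a cube of side $2^{j-s}$ within distance $2^j$ of $x$. This reduces the sup over $k$ to a maximal partial sum over a sequence of length $N\lesssim s$.

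The Rademacher--Menshov theorem then bounds that sup by $(\log s)^2$ times an orthogonal-type square sum $\sum_j\|T_J^s b_{j-s}\|_2^2$, up to almost-orthogonality errors. This is the core square function estimate. For a single scale I expect to prove
\[
\|T_j^{s,(1)}b_{j-s}\|_2^2\lesssim 2^{-\delta' s}\,\|\Omega\|_1^2\,2^{\epsilon s}\,\frac{\lambda}{C_\Omega}\,\|b_{j-s}\|_1 .
\]
The route is Seeger's microlocal decomposition. The smoothing at scale $2^{j-s_\gamma}$ in $x$ and $y$ makes $K_j^s$ essentially a smooth function of $x-y$ at that scale, up to errors of size $2^{-\gamma s_\gamma}\approx s^{-2}$. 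These errors are absorbed like those in Lemma~\ref{T_j - T_j^s}, combined with the sup bound $|\Omega_s^{(1)}|\le 2^{\epsilon s}\|\Omega\|_1$.

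Split the angular variable into caps of aperture $2^{-s/2}$. On each cap, combine the mean zero of $b_Q$ (side $2^{j-s}$) with a frequency localisation adapted to the direction of the cap, and then apply Plancherel and the H\"ormander--Mihlin multiplier theorem. Choosing $\epsilon<\delta'/2$ closes the estimate.

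\textbf{Main obstacle.} The hard step is this single-scale microlocal $L^2$ estimate for the non-convolution kernel. It must be combined with enough almost-orthogonality across scales $j$ and layers that the Rademacher--Menshov loss stays only polylogarithmic in $s$. The non-convolution structure means Plancherel cannot be applied directly. I would first freeze the $x$-variable of $K_j^s$ on cubes of side $2^{j-s_\gamma}$ and control the frozen-versus-unfrozen error with \eqref{regularity condition}.
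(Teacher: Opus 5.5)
Your treatment of the large part of $\Omega$ and the final Chebyshev summation over $s$ match the paper (Lemma~\ref{lemma T_{J,1}^s} and the reduction to Lemma~\ref{lemma T_{J,2}^s}). The gap is in the bounded part, at the step that controls the supremum over $k$.

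\textbf{The $O(s)$ claim is false.} You assert that within a layer only $O(s)$ scales $j$ are active at a point $x\notin E^*$. The reason you give does not bound the number of scales at all. For $-M\le t\le 0$, let $Q_t$ be the dyadic cube of side $2^t$ containing $x+2^{t+s-2}e_1$, and let $f=\frac{3\lambda}{2C_\Omega}\chi_{\bigcup_t Q_t}$. For $s$ large, the stopping cubes are exactly the $Q_t$, and $x\notin E^*$. Yet $T^s_{t+s}b_t(x)$ can be nonzero for every $t$. So the number of active scales at $x$ is unbounded. Pigeonholing in $m$, so is the length of nested chains of generated cubes $J^m$ through $x$.

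\textbf{The missing mechanism.} Any layering must therefore cut long chains into pieces, and you need a reason why the pieces are summable. Your proposal gives none, and this is the paper's new idea. The blocks $H^{i-1}_{s,m}\setminus H^i_{s,m}$ have chain depth $\lceil C_0 2^{ds}\rceil$. The counting Lemma~\ref{sum|J|} ($\sum |J|\lesssim 2^{ds}|A|$ over active $J\subseteq A$) then gives $|F^i_{s,m}|\lesssim 3^{-i}\lambda^{-1}C_\Omega\|f\|_1$. Hence the $b_Q$ feeding block $i$ have mass $\lesssim 3^{-i}\|f\|_1$ by \eqref{sum|b_Q|_1}, and the factors $3^{-i/2}$ sum. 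This decay works only because the block depth beats the factor $2^{ds}$ in the counting lemma. So Rademacher--Menshov is applied to sequences of length $u_0\approx 2^{ds}$, not $\lesssim s$. The loss $\log(2+u_0)\lesssim s$ is then absorbed by $2^{-\delta s}$.

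\textbf{A second gap: the input to Rademacher--Menshov.} Lemma~\ref{Rademacher-Menshov} needs a bound on $\|\sum_u\epsilon_u h_u\|_{L^2}$ uniform over all sign choices. The pieces at different scales are not orthogonal. A single-scale bound on $\|T_j^s b_{j-s}\|_2^2$ plus unspecified ``almost-orthogonality errors'' therefore does not supply it. The paper proves the signed multiscale estimate directly (Lemma~\ref{epsilon_u beta_u}):
\begin{itemize}
\item It smooths the sharp cutoffs $\chi_{J^m}$ created by the layering at scale $2^{j-s\kappa}$.
\item For the microlocalized piece $I_3$, it expands the square and controls all cross-scale interactions with the Calder\'on--Zygmund height bound $\sum_{i\le j}\chi_{\widetilde E^{s,v}_j}*\mathcal{B}_{i-s}\lesssim \lambda C_\Omega^{-1}|3\widetilde E^{s,v}_j|$.
\item The remaining pieces are handled by interpolating $L^1$ bounds against $L^3$ bounds. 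The $L^1$ bounds use the cancellation of $b_Q$ and sum trivially over $j$.
\end{itemize}

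\textbf{Your freezing step.} Freezing the $x$-variable of $K$ at scale $2^{j-s_\gamma}$ may be a workable substitute for the paper's derivative bounds on $K_j^s$. But the non-convolution structure is not the real obstacle here, and freezing addresses neither of the two gaps above.
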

For brevity, we prove only Lemma~\ref{T_j - T_j^s} here; the proof of Lemma~\ref{T_j^s} is more involved and is deferred to the next subsection.

\begin{proof}[Proof of Lemma \ref{T_j - T_j^s}]
By the definition of $K_j^s$ and $\int_{\R^d} \eta_{j-s_{\gamma}} = 1$, we have
    \begin{align*}
    \big| \big( K_j-K_j^s \big)(x,y) \big|
    &\leq \int_{\R^d} \int_{\R^d} \eta_{j-s_{\gamma}}(x-u) \eta_{j-s_{\gamma}}(y-v) \big| \big( K_j(x,y)-K_j(u,v) \big) \big| \dif u \dif v \\
    &\leq \sup_{{\begin{subarray}{c} u,v\in \\ \supp\eta_{j-s_{\gamma}} \end{subarray}}} \big| K_j(x,y) - K_j(x-u,y-v) \big|.
    \end{align*}
We split the difference $\big| K_j(x,y) - K_j(x-u,y-v) \big|$ as 
    \begin{align*}
    \big| K_j(x,y) - K_j(x,y-v) \big| + \big| K_j(x,y-v) - K_j(x-u,y-v) \big|.
    \end{align*}
It suffices to estimate the first term, since the second one can be handled similarly.

By the definition of $K_j$,
    \begin{align*}
	\big| K_j(x,y) - K_j(x,y-v) \big|
	&\leq \big| \phi_j(x-y) \big| \big| K(x,y) - K(x,y-v) \big| \\
	&\quad + \big| \phi_j(x-y) - \phi_j(x-y-v) \big| \big| K(x,y-v) \big| \\
	& =: I_1 + I_2.
    \end{align*}
We aim to show 
    \begin{align*} 
    I_i \lesssim s^{-2} 2^{-jd} \chi_{ \{2^{j-5} < |x-y| < 2^{j-1}\} },
    \end{align*}
for $i=1, 2$.

\medskip
\noindent\textbf{Estimate of \(I_1\).} 
We first verify that $|x-y|>2|v|$, which allows us to apply the regularity condition (\ref{regularity condition}).
Indeed, when $\phi_j (x-y) \neq 0$, we have $2^{j-4} < |x-y| < 2^{j-2}$. 
Meanwhile, since $v \in \supp \eta_{j-s_{\gamma}} \subset \{z:\in\R^d: |z| < 2^{j-s_{\gamma}} \}$ and $s_{\gamma} \geq 10$, we get 
    \[
    2|v| < 2^{j-s_{\gamma} + 1} \leq 2^{j-9} < |x-y|.
    \]

Thus, using the regularity condition \eqref{regularity condition} and \eqref{2^{s_{gamma}}},
    \begin{align*}
    \big| K(x,y) - K(x,y-v) \big|
    \lesssim \frac{|v|^{\gamma}}{|x-y|^{d+\gamma}}
    \lesssim 2^{- \gamma s_{\gamma}} 2^{-jd}
    \lesssim s^{-2} 2^{-jd}. 
    \end{align*}
Together with the support of $\phi_j$, we obtain
    \begin{align*}
    I_1 \lesssim s^{-2} 2^{-jd} \chi_{ \{2^{j-4} <|x-y|< 2^{j-2}\} }.	
    \end{align*}

\medskip
\noindent\textbf{Estimate of \(I_2\).}
We first determine the support of $I_2$.
When $\phi_j (x-y) - \phi_j (x-y-v) \neq 0$, one of the following cases holds:
    \[
    (i)\ 2^{j-4} < |x-y| < 2^{j-2}; \qquad
    (ii)\ 2^{j-4} < |x-y-v'| < 2^{j-2}.
    \]
If $(i)$ holds, we directly have $2^{j-5} < |x-y| < 2^{j-1}$.
In either case, using $|v| < 2^{j-s_{\gamma}} < 2^{j-10} $,  
    \begin{align*}
    2^{j-5} \leq |x-y-v| - |v| \leq |x-y| \leq |x-y-v| + |v| \leq 2^{j-1}.
    \end{align*}
This provides the desired support property.    
Moreover, a similar calculation shows that $|x-y-v| \approx 2^j$ whenever $\phi_j (x-y) - \phi_j (x-y-v) \neq 0$.
Hence, by the size condition (\ref{Size condition}), the mean value theorem, $|v| < 2^{j-s_{\gamma}}$, and (\ref{2^{s_{gamma}}}),
    \begin{align*}
    I_2 \leq |x-y-v|^{-d} \big( |v| 2^{-j} \big) \chi_{ \{2^{j-5} < |x-y| < 2^{j-1} \} } 
        \lesssim s^{-2} 2^{-jd} \chi_{ \{2^{j-5} < |x-y| < 2^{j-1} \} } .
    \end{align*}

Consequently, from the estimates for $I_1$ and $I_2$,
    \begin{align*}
    \big| \big( K_j-K_j^s \big)(x,y) \big|
    \lesssim s^{-2} 2^{-jd} \chi_{ \{2^{j-5} < |x-y| < 2^{j-1} \} },
    \end{align*}
which implies 
    \begin{align*}
    \big\| \big(T_j - T_j^s\big) b_{j-s} \big\|_{L^1(\R^d)}
    &= \int_{\R^d} \Big| \int_{\R^d} \Omega(x-y) \big( K_j-K_j^s \big)(x,y) b_{j-s}(x)  \dif y \Big| \dif x \\
    &\lesssim s^{-2} 2^{-jd} \int_{\R^d} \Big( \int_{\R^d} \big| \Omega(x-y) \big| \chi_{ \{2^{j-5} \leq|x-y|\leq 2^{j-1} \} } \dif y \Big) \big| b_{j-s}(x) \big| \dif x \\
    &\lesssim s^{-2} \|\Omega\|_{1} \|b_{j-s}\|_{L^1(\R^d)}.
    \end{align*}
Finally, by the Chebyshev inequality,  
    \begin{align*}
    \Big|\{x\in (E^*)^c: \sum_{s\geq 200}  \sup_{k\in\Z} & \Big|\sum_{j\geq k} \big(T_j - T_j^s\big) b_{j-s} (x) \Big| >\lambda/4 \} \Big| \\ 
    &\lesssim \lambda^{-1} \sum_{s\geq 200} \sum_{j\in\Z} \big\| \big(T_j - T_j^s\big) b_{j-s}  \big\|_{L^1(\R^d)} \\
    &\lesssim \lambda^{-1} \sum_{s\geq 200} s^{-2} \|\Omega\|_{1} \sum_{j\in\Z} \|b_{j-s}\|_{L^1(\R^d)},
    \end{align*}
which is bounded by $\lambda^{-1} \|\Omega\|_{1} \|f\|_{L^1(\R^d)}$ by using (\ref{sum_s ||b_s||_1}).    
This completes the proof of Lemma \ref{T_j - T_j^s}.  	
\end{proof}

It remains to prove Lemma \ref{T_j^s}.
Thus, we have reduced the estimate for the bad function to that of the smoothed operator \(T^s_j\).

\medskip
\subsection{Estimates for the bad function II: Reduction to the core square function}

In this subsection, we further decompose the operator \(T_j^s\) by using the dyadic decomposition $T_j^s=\sum\limits_{J\in\mD_j}T_J^s$ (see (\ref{T_j^s = sum T_J^s})) and then split the function \(\Omega\) according to the size of $|\Omega(\theta)|$.

To begin with, by $T_j^s=\sum\limits_{J\in\mD_j}T_J^s$, the left-hand side of (\ref{equality T_j^s}) becomes
    \begin{align} \label{rewrite the left‑hand side of ({T_j^s})}
    \Bigl|\Bigl\{x\in(E^*)^c:\sum_{s\ge200}\sup_{k\in\mathbb{Z}}\Bigl|\sum_{j\ge k}\sum_{J\in\mD_j} T_J^s b_{j-s}(x)\Bigr|>\lambda/4\Bigr\}\Bigr|.
    \end{align}
To proceed, choose a sufficiently small constant $\iota>0$ (to be fixed later) and set
    \begin{align*}
    D^{\iota,s} = \big\{ \theta\in\sS^{d-1}: |\Omega(\theta)| > 2^{\iota s} \|\Omega\|_{1} \big\}.
    \end{align*}
Define $\Omega_1 = \Omega \chi_{D^{\iota,s}}$ and $\Omega_2 = \Omega \chi_{(D^{\iota,s})^c}$.
Correspondingly, we obtain \(T_J^s = T_{J,1}^s+T_{J,2}^s\) with
    \begin{align*}
    & T_{J,1}^s h(x) = \int_{\R^d} \Omega_1(x-y) K_j^s(x,y) h(y) \chi_J(y) \dif y, \\
    & T_{J,2}^s h(x) = \int_{\R^d} \Omega_2(x-y) K_j^s(x,y) h(y) \chi_J(y) \dif y.
    \end{align*}    
Now, applying the Chebyshev inequality to \eqref{rewrite the left‑hand side of ({T_j^s})}, we get
    \begin{align*}
    (\ref{rewrite the left‑hand side of ({T_j^s})})
    &\leq \lambda^{-1} \sum_{s\geq 200} \sum_{j\in\Z} \sum_{J\in\mD_j} \big\| T_{J,1}^s b_{j-s} \big\|_{L^1(\R^d)} \\
    &\quad+ \lambda^{-2} \Big( \sum_{s\geq 200}  \Big\| \sup_{k\in\Z} \Big| \sum_{j\geq k} \sum_{J\in\mD_j} T_{J,2}^s b_{j-s} \Big| \Big\|_{L^2(\R^d)} \Big)^2.
    \end{align*}
Thus, the proof of \eqref{equality T_j^s} is reduced to the following two lemmas:   
    \begin{lemma} \label{lemma T_{J,1}^s}
    With the above notation,
    \begin{align*}
    \sum_{s\geq 200} \sum_{j\in\Z} \sum_{J\in\mD_j} \big\| T_{J,1}^s b_{j-s} \big\|_{L^1(\R^d)}
	\lesssim C_{\Omega}  \|f\|_{L^1(\R^d)}.	
    \end{align*}
    \end{lemma}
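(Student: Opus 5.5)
The estimate follows from a direct $L^1$ bound on each piece, then a rearrangement of the sum over $s$ that turns the large-value part of $\Omega$ into the $L\log L$ quantity $C_\Omega$. No cancellation of $b$ is used here.

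\textbf{Step 1: an $L^1$ bound for one piece.} Fix $s\ge 200$, $j\in\Z$ and $J\in\mD_j$. By Fubini and the kernel estimate \eqref{estimate for K_j^s(x,y)},
\begin{align*}
\big\|T_{J,1}^s b_{j-s}\big\|_{L^1(\R^d)}
&\le \int_J |b_{j-s}(y)| \int_{\R^d} |\Omega_1(x-y)|\,|K_j^s(x,y)|\,\dif x\,\dif y\\
&\lesssim \int_J |b_{j-s}(y)|\, 2^{-jd}\int_{2^{j-5}\le |z|\le 2^{j-1}} |\Omega_1(z)|\,\dif z\,\dif y .
\end{align*}
In polar coordinates the inner integral equals $c\,2^{jd}\|\Omega_1\|_{1}$, so
\[
\big\|T_{J,1}^s b_{j-s}\big\|_{L^1} \lesssim \|\Omega\chi_{D^{\iota,s}}\|_{1}\,\|b_{j-s}\chi_J\|_{L^1}.
\]

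\textbf{Step 2: sum over $J$ and $j$.} The cubes of $\mD_j$ partition $\R^d$, so summing over $J\in\mD_j$ gives $\|\Omega\chi_{D^{\iota,s}}\|_1\|b_{j-s}\|_{L^1}$. Summing over $j$ and using \eqref{sum_s ||b_s||_1}, which is uniform in $s$ because $\sum_j\|b_{j-s}\|_1=\sum_t\|b_t\|_1$, yields
\[
\sum_{j}\sum_{J\in\mD_j}\big\|T_{J,1}^s b_{j-s}\big\|_{L^1} \lesssim \|\Omega\chi_{D^{\iota,s}}\|_{1}\,\|f\|_{L^1}.
\]

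\textbf{Step 3: sum over $s$.} This is the only nontrivial point. It remains to show
\[
\sum_{s\ge 200}\int_{\sS^{d-1}}|\Omega(\theta)|\,\chi_{\{|\Omega(\theta)|>2^{\iota s}\|\Omega\|_1\}}\,\dif\theta \lesssim C_\Omega .
\]
Interchange sum and integral. For fixed $\theta$ with $\Omega(\theta)\neq 0$, the number of $s\ge 0$ satisfying $2^{\iota s}<|\Omega(\theta)|/\|\Omega\|_1$ is at most $\iota^{-1}\log_2^+\big(|\Omega(\theta)|/\|\Omega\|_1\big)+1$, and it is zero unless $|\Omega(\theta)|>\|\Omega\|_1$. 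Hence the sum is bounded by
\[
\iota^{-1}\int_{\sS^{d-1}}|\Omega(\theta)|\Big(1+\log^+\big(|\Omega(\theta)|/\|\Omega\|_1\big)\Big)\dif\theta \lesssim_\iota C_\Omega,
\]
by the definition \eqref{C_Omega}. The constant depends on $\iota$, which is fixed later as an absolute constant, so this is harmless. The case $\Omega\equiv 0$ is trivial. Combining the three steps gives Lemma~\ref{lemma T_{J,1}^s}.
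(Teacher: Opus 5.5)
Your proof is correct and is the paper's argument. The paper likewise uses the kernel bound $|K_j^s(x,y)|\lesssim 2^{-jd}\chi_{\{2^{j-5}\le|x-y|\le 2^{j-1}\}}$ to get $\|T_{J,1}^s b_{j-s}\|_{L^1}\lesssim \|\Omega\chi_{D^{\iota,s}}\|_1\|b_{j-s}\chi_J\|_{L^1}$, and then interchanges the $s$-sum with the $\theta$-integral to produce the $L\log L$ term in $C_\Omega$. Your write-up is somewhat more careful than the paper's in two places: you state explicitly that $\sum_j\|b_{j-s}\|_{L^1}$ is bounded uniformly in $s$, and that the implicit constant depends on the fixed parameter $\iota$.
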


    \begin{lemma} \label{lemma T_{J,2}^s}
    With the above notation, for any $s>200$, there exists a positive constant $\delta$ and a positive integer $N_0$ such that
    \begin{align*}
    \Big\| \sup_{k\in\Z} \Big| \sum_{j\geq k} \sum_{J\in\mD_j} T_{J,2}^s b_{j-s} \Big| \Big\|_{L^2(\R^d)}
	\lesssim 2^{-\delta s} s^{ \frac{N_0}{2} \gamma^{-1}+1}  \big( \lambda C_{\Omega} \|f\|_{L^1(\R^d)} \big)^{1/2}.
    \end{align*}
    \end{lemma}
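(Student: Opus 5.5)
Lemma~\ref{lemma T_{J,2}^s} is the core $L^2$ estimate. I would prove it by combining Seeger's microlocal method with a maximal-function bound coming from the Rademacher--Menshov theorem.

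\textbf{Step 1: reduce to layers.} Fix $s>200$. Since $T_{J,2}^s b_{j-s}$ is supported in $3J$ by \eqref{supp T_J^s h}, the supremum over $k$ only couples those scales $j$ whose cubes $J\ni x$ are nested. For each point $x$ the relevant cubes form a chain $Q_{x,j}$, $j\in\Z$. I would organize the dyadic cubes $J$ that actually meet $\operatorname{supp} b_{j-s}$ into nested layers $J^m$, grouped according to how many ancestors in the chain carry bad mass. On each layer the number of active scales is controlled. Pulling out the layers costs a factor $s^{O(\gamma^{-1})}$, which is where the factor $s^{N_0\gamma^{-1}/2}$ comes from.

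\textbf{Step 2: linearize the supremum.} On a fixed layer, the supremum over $k$ of partial sums $\sum_{j\ge k}$ is handled by the Rademacher--Menshov theorem. Up to a factor $\log(\#\text{scales})$, this reduces the maximal bound to square-function bounds for the family $\{T^s_{J,2}b_{j-s}\}$ with arbitrary signs $\epsilon_u\in\{\pm1\}$ and dyadic blocks $\beta_u$. Because the supports $3J$ overlap boundedly within a layer, almost-orthogonality in $x$ lets me sum the $L^2$ norms cube by cube.

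\textbf{Step 3: decay for a single scale (the main obstacle).} The key estimate is
\[
\Big\|\sum_J \epsilon_J T_{J,2}^s b_{j-s}\Big\|_2^2\lesssim 2^{-2\delta s}\,\lambda C_\Omega^{-1}\,\|\Omega\|_1^2\,\|b_{j-s}\|_1,
\]
up to harmless factors. To get it, I would use Seeger's microlocal decomposition of $\Omega_2$ on the sphere into caps of aperture $2^{-s(1-\epsilon)}$. Correspondingly, I would split frequency space into cones, and at each point freeze $K_j^s(x,y)\approx K_j^s(x,c_J)$.

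The error from freezing $y$ is controlled by the $y$-regularity in \eqref{regularity condition}. Since the kernel was mollified at scale $2^{j-s_\gamma}$, this gives a gain of $2^{-\gamma s}$ times a power of $2^{s_\gamma}$; that power is again $s^{O(\gamma^{-1})}$ by \eqref{2^{s_{gamma}}}.

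After freezing, the operator is essentially a convolution in $y$ with an $x$-dependent amplitude. I would expand this amplitude in Fourier series on $3J$, using its smoothness at scale $2^{j-s_\gamma}$. Each term then admits Seeger's estimates:
\begin{itemize}
\item the nonstationary-phase term, where the frequency is transverse to the cap direction, is estimated in $L^1$ and decays like $2^{-Ns\epsilon}$;
\item the main term is estimated in $L^2$ using $\|\Omega_2\|_\infty\le 2^{\iota s}\|\Omega\|_1$, the cancellation of $b_Q$, and $|b|\lesssim\lambda/C_\Omega$ on average, giving $2^{-s(1-\epsilon)(d-1)/2}\,2^{\iota s}$ decay;
\item the Fourier-series truncation errors are controlled by the H\"ormander--Mihlin theorem.
\end{itemize}
Choosing $\iota,\epsilon$ small relative to $d$ yields $\delta>0$. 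These correspond to the estimates \eqref{estimate for I_1}--\eqref{estimate for I_4}. This is the delicate step, since the non-convolution structure must be kept only at the mollification scale.

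\textbf{Step 4: sum.} Summing over $j$ with $\sum_j\|b_{j-s}\|_1\lesssim\|f\|_1$ from \eqref{sum_s ||b_s||_1}, and combining the layer and Rademacher--Menshov losses, gives the stated bound with $C_\Omega$ absorbing $\|\Omega\|_1$.
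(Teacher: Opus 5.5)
Your outline names the right ingredients (generated cubes $J^m$, layers, Rademacher--Menshov, microlocal decomposition), but the two mechanisms that make the proof work are missing.

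\textbf{Summing over layers.} The layering does not ``cost $s^{O(\gamma^{-1})}$''. The number of layers $H^{i-1}_{s,m}\setminus H^{i}_{s,m}$ is not bounded in terms of $s$. The sum over $i$ converges only because of the geometric lemma $|F^i_{s,m}|\lesssim 3^{-i}\lambda^{-1}C_\Omega\|f\|_{L^1}$. That lemma follows from two facts:
the packing bound $\sum_{J\subseteq A,\ b_{j-s}\chi_J\neq 0}|J|\lesssim 2^{ds}|A|$, which holds because the cubes $Q_J\subset J$ are pairwise disjoint;
and the exact count $\#G^i_s(J^m)=\lceil C_02^{ds}\rceil$ for maximal cubes.
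It gives $\sum_j\sum_{Q\in\mathfrak{Q}_{j-s}}\|b_Q\|_{L^1}\lesssim 3^{-i}\|f\|_{L^1}$, hence a factor $3^{-i/2}$ per layer. Nothing in your proposal supplies this decay.

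The polynomial factors also have other origins than you state. The extra $s$ is the Rademacher--Menshov loss $\log(2+u_0)\lesssim s$, where $u_0=\lceil C_02^{ds}\rceil$ is the maximal chain length in a layer. The factor $s^{N_0\gamma^{-1}/2}=(s^{2N_0\gamma^{-1}})^{1/4}$ comes from $N_0$ integrations by parts in $r$ against $K^s_j$, each $r$-derivative costing $2^{s_\gamma-j}\approx s^{2/\gamma}2^{-j}$, followed by $L^1$--$L^3$ interpolation.

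\textbf{The multi-scale estimate.} You never prove what Rademacher--Menshov actually requires: a bound for $\|\sum_u\epsilon_u\beta_u\|_{L^2}$ uniformly in the signs. This is a genuinely multi-scale estimate.

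Inside one layer, the cubes $J^m$ containing a point form a chain of length up to $u_0\approx C_02^{ds}$. So the supports do not overlap boundedly; only each sublayer $\mathcal{L}_u$ is disjoint. Summing $L^2$ norms cube by cube or sublayer by sublayer therefore loses a power of $2^{ds}$.

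Your Step 4 then adds single-scale bounds over $j$ as if the pieces $T^s_jb_{j-s}$ were orthogonal across scales. They are not, and even a square-function bound would not give the uniform-in-signs estimate.

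The paper never separates scales in $L^2$. It proves $L^1$ bounds for the full sum over $j$, where the triangle inequality is harmless. It proves $L^3$ bounds via the trilinear expansion over $j_1\ge j_2\ge j_3$. That expansion uses that all Calder\'on--Zygmund cubes are disjoint across scales with averages $\approx\lambda/C_\Omega$, so that $\sum_{j_3\le j_2}\int_{|y_2-y_3|\le 2^{j_2}}\mathcal{B}_{j_3-s}\lesssim\lambda C_\Omega^{-1}2^{j_2d}$. It then interpolates. The main microlocal term $I_3$ is handled by expanding the square and using the same disjointness over the tubes $\widetilde E^{s,v}_j$.

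\textbf{The cutoffs $\chi_{J^m}$.} You also ignore the sharp cutoffs $\chi_{J^m}$ that the layer decomposition introduces. The paper replaces them by $\widetilde\chi_{J^m}=P_{j-s\kappa}\chi_{J^m}$, at boundary-layer cost $2^{-s\kappa}$ (term $I_1$). After that, each integration by parts in $r$ produces a factor of only about $2^{s\kappa-j}/|\langle\xi,\theta\rangle|$. This is why the caps have aperture $2^{-s\mu}$ with $\iota\ll\mu\ll\kappa\ll\sigma\ll1$, and the net gain is only a small power $2^{-\delta s}$. The $2^{-s(1-\epsilon)(d-1)/2}$ gain you claim from caps of aperture $2^{-s(1-\epsilon)}$ does not survive the cutoffs in this scheme.

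Separately, freezing $y$ at $c_J$ gains nothing, since $|y-c_J|$ is comparable to $|x-y|\sim 2^j$. The gain has to come from the cancellation of $b_Q$ at $c(Q)$.
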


The proof of Lemma~\ref{lemma T_{J,1}^s} is straightforward by using (\ref{estimate for K_j^s(x,y)}) and the definition of $\Omega_1$; we include it here for completeness.
However, the proof of Lemma~\ref{lemma T_{J,2}^s} is considerably more involved; it requires a linearization of the maximal operator, a careful geometric partition of the dyadic cubes, and oscillatory integral estimates. 
We postpone its proof to Section~\ref{Section: proof of lemma T_{J,2}^s}.

\begin{proof}[Proof of Lemma \ref{lemma T_{J,1}^s}]
By (\ref{estimate for K_j^s(x,y)}), 
    \begin{align*}
    \big\| T_{J,1}^s b_{j-s} \big\|_{L^1(\R^d)}
    &\leq 2^{-jd} \int_{\R^d} |\Omega_1(x-y)| \chi_{ \{2^{j-5} < |x-y| < 2^{j-1}\} } \Big( \int_{\R^d} \big| b_{j-s}(y) \big| \chi_J(y) \dif y \Big) \dif x \\
    &= 2^{-jd} \Big(\int_{2^{j-5}}^{2^{j-1}} \int_{\sS^{d-1}} |\Omega_1(\theta)| \dif\theta \dif r \Big) \big\| b_{j-s}\chi_J \big\|_{L^1(\R^d)} \\
    &\lesssim \Big( \int_{\sS^{d-1}} |\Omega_1(\theta)| \dif\theta \Big) \| b_{j-s}\chi_J \|_{L^1(\R^d)}.
    \end{align*}
Then, using (\ref{sum_s ||b_s||_1}),
    \begin{align*}
    \sum_{s\geq 200} \sum_{j\in\Z} \sum_{J\in\mD_j} \big\| T_{J,1}^s b_{j-s} \big\|_{L^1(\R^d)}
    &\lesssim \sum_{j\in\Z} \| b_{j-s} \|_{L^1(\R^d)} \int_{\sS^{d-1}} |\Omega(\theta)| \sum_{s\geq 200}\chi_{D^{\iota,s}}(\theta)  \dif\theta \\
    &\lesssim \|f\|_{L^1(\R^d)} \int_{\sS^{d-1}}  |\Omega(\theta)| \Big(1+\log^+(|\Omega(\theta)\|/\|\Omega\|_{1}) \Big) \dif\theta \\
    &\lesssim C_{\Omega} \|f\|_{L^1(\R^d)}.
    \end{align*}
\end{proof}

Consequently, the proof of Lemma \ref{T_j^s} is reduced to Lemma~\ref{lemma T_{J,1}^s} (proved above) and Lemma~\ref{lemma T_{J,2}^s} (whose proof is postponed to the next section).

\medskip
\section{Proof of Lemma \ref{lemma T_{J,2}^s}: the core square function estimate}   \label{Section: proof of lemma T_{J,2}^s}
This section is devoted to the proof of Lemma~\ref{lemma T_{J,2}^s}, i.e. the $L^2$ estimate for $T_{J,2}^s$ (the part of $|\Omega(\theta)| \leq 2^{\iota s} \|\Omega\|_{L_1(\sS^d)}$). 
We fix the integer $s\geq 200$ from now on.

\medskip
\subsection{Preparations for proving Lemma \ref{lemma T_{J,2}^s}: The partition of dyadic subcubes}
For $j\in\Z$, notice the definition
    \begin{align*}
    T_{J,2}^s b_{j-s}(x)
    = \int_{\R^d} \Omega_2(x-y) K_j^s(x,y) b_{j-s}(y) \chi_J(y) \dif y,\quad 
    J\in\mD_j. 
    \end{align*}
Hence, we only need to consider those cubes $J\in\mD_j$ for which $b_{j-s}\chi_J \neq 0$.
The following lemma estimates the total measure of all such cubes, which is essential for handling the sum $\sum\limits_{j\geq k} \sum\limits_{J\in\mD_j} T_{J,2}^s b_{j-s}$ in Lemma~\ref{lemma T_{J,2}^s}.

\begin{lemma} \label{sum|J|}
For any measurable set $A\subset \R^d$, we have
	\begin{align*}
	\sum_{j\in\Z} \sum_{{\begin{subarray}{c} J\in \mD_j, J\subseteq A \\ b_{j-s}\chi_J \neq 0 \end{subarray}}} |J| 
	\lesssim 2^{ds} |A|.
	\end{align*}
Moreover, 
	\begin{align*}
	\sum_{j\in\Z} \sum_{{\begin{subarray}{c} J\in \mD_j,  \\ b_{j-s}\chi_J \neq 0 \end{subarray}}} |J| 
	\lesssim 2^{ds} \lambda^{-1} C_{\Omega} \|f\|_{L^1(\R^d)}.
	\end{align*}
\end{lemma}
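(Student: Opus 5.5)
The plan is to charge each cube $J$ to the Calder\'on--Zygmund cubes it contains. Fix $j\in\Z$ and $J\in\mD_j$ with $b_{j-s}\chi_J\neq 0$. Since $b_{j-s}=\sum_{Q\in\mQ_{j-s}}b_Q$ with $\supp b_Q\subseteq Q$, there is some $Q\in\mQ_{j-s}$ with $Q\cap J\neq\emptyset$. Both $Q$ and $J$ are standard dyadic cubes and $\ell(Q)=2^{j-s}<2^j=\ell(J)$ because $s\ge 200$. By the nesting property of $\mD$ this forces $Q\subseteq J$. Hence every such $J$ contains at least one cube of $\mQ_{j-s}$, and so
\[
|J| = 2^{ds}\,|Q| \le 2^{ds}\sum_{Q\in\mQ_{j-s},\,Q\subseteq J}|Q|.
\]

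Next we sum over $J$ and $j$. For fixed $j$ the cubes $J\in\mD_j$ are pairwise disjoint. Also, each $Q\in\mQ_{j-s}$ lies in exactly one $J\in\mD_j$, namely its dyadic ancestor of side $2^j$. Therefore
\[
\sum_{\substack{J\in\mD_j,\,J\subseteq A\\ b_{j-s}\chi_J\neq0}}|J|\le 2^{ds}\sum_{\substack{Q\in\mQ_{j-s}\\ Q\subseteq A}}|Q|.
\]
In this step we use that if $Q\subseteq J\subseteq A$ then $Q\subseteq A$. Summing over $j\in\Z$, the map $j\mapsto \mQ_{j-s}$ runs through each side length exactly once. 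By (cz-iv) the cubes of $\mQ$ are pairwise disjoint, so
\[
\sum_{j}\sum_{Q\in\mQ_{j-s},\,Q\subseteq A}|Q| = \Big|\bigcup_{Q\in\mQ,\,Q\subseteq A}Q\Big|\le |A|.
\]
This gives the first inequality.

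For the second inequality we drop the restriction $J\subseteq A$ and repeat the same chain. The sum is then bounded by $2^{ds}\sum_{Q\in\mQ}|Q| = 2^{ds}|E|$, using disjointness (cz-iv). Finally, (cz-v) gives $|E|\lesssim \lambda^{-1}C_\Omega\|f\|_{L^1(\R^d)}$.

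The argument has essentially no obstacle. The one point needing care is the nesting step $Q\subseteq J$: it requires $\ell(Q)\le\ell(J)$, which holds since $s>0$, and that $\mQ$ consists of standard dyadic cubes in the same grid $\mD$, which holds because the Calder\'on--Zygmund decomposition was performed with respect to $\mD$.
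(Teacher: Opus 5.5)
Your proof is correct and follows the paper's argument. Each $J\in\mD_j$ with $b_{j-s}\chi_J\neq 0$ contains a cube $Q\in\mQ_{j-s}$ by dyadic nesting, $|J|=2^{ds}|Q|$, and (cz-iv) and (cz-v) then give both bounds; the only cosmetic difference is that the paper picks a single representative $Q_J\subset J$ and checks the $Q_J$ are pairwise disjoint, while you sum over all $Q\subseteq J$ and use that each $Q$ has a unique ancestor in $\mD_j$, which is the same counting without the choice.
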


\begin{proof}
Fix a $j\in\Z$.
Since $b_{j-s} = \sum\limits_{Q\in\mQ_{j-s}} b_Q$ and $\operatorname{supp}(b_Q) \subseteq Q$ (see (cz-ii)), any cube $J\in\mD_j$ such that $b_{j-s}\chi_J \neq 0$ must intersect some $Q\in\mQ_{j-s}$.
Because the side length of $Q$ is much smaller than that of $J$, and both $Q$ and $J$ are dyadic cubes, we actually have $Q\subset J$.
Choose one such $Q$ and denote it by $Q_J$ (if more than one such $Q$ exists, we simply pick one).

We claim that all $Q_J$'s are disjoint.
Indeed, take two distinct dyadic cubes \(J\in\mathcal{D}_j\) and \(K\in\mathcal{D}_k\) with \(J\neq K\). 
If $k=j$, then $J \cap K=\varnothing$, hence $Q_J\cap Q_K=\varnothing$.
If $k\neq j$, then $\ell(Q_J) \neq \ell(Q_K) $, which means $Q_J\neq Q_K$. 
This gives $Q_J\cap Q_K=\varnothing$ by using (cz-iv).  

Now we turn to the proof of the lemma.
Observe that $|Q_J| = 2^{(j-s)d} = 2^{-sd}|J|$.
Using the disjointness of the cubes $Q_J$'s and applying (cz-v), we obtain
    \begin{align*}
    \sum_{j\in\Z} \sum_{{\begin{subarray}{c} J\in \mD_j, J\subseteq A \\ b_{j-s}\chi_J \neq 0 \end{subarray}}} |J| 
    &\leq 2^{-sd}  \sum_{j\in\Z} \sum_{{\begin{subarray}{c} J\in \mD_j, J\subseteq A \\ Q_J\in\mQ_{j-s} \\ Q_J\subset J \end{subarray}}} |Q_J| \\
    &\leq 2^{-sd}  \sum_{j\in\Z} \sum_{{\begin{subarray}{c} Q\in\mQ_{j-s} \\ Q\subseteq A \end{subarray}}} |Q| \\
    &\lesssim 2^{-sd} \min\big\{ |A|,\ \lambda^{-1} C_{\Omega} \|f\|_{L^1(\R^d)} \big\},
    \end{align*}
 which proves the desired estimates.
\end{proof}

Fix a cube $J\in\mD_j$. 
To partition $3J$ into smaller cubes, we divide each of its sides into three equal segments. 
This gives $3^d$ congruent subcubes, each of side length $\ell(J)/3$, with \(J\) itself as the central cube.
Label these subcubes as $J^1, J^2, \dots, J^{3^d}$.
Since $J\in\mD_j$, the cubes obtained by trisecting \(3J\) also lie in \(\mathcal{D}_j\). 
We call these \(J^m\) (\(1\le m\le 3^d\)) \emph{generated cubes} (as they are generated from the dyadic cube $J$), and we call $J$ the \emph{original cube} of $J^m$.

\begin{remark} \label{three possibilities}
Since all generated cubes lie in the dyadic grid \(\mathcal{D}\),  any two of them are either disjoint or one contains the other.  
In other word, for distinct \(J\in\mathcal{D}_j\) and \(K\in\mathcal{D}_k\), and any \(1\le m,n\le 3^d\), we have
\[
J^m \cap K^n \in \{J^m,\; K^n,\; \varnothing\}.
\]
\end{remark}

\textbf{The labeling is fixed} 
so that for any cube in $\mD = \bigcup\limits_{j\in\Z} \mD_j$, the generated cube occupying the same relative position always receives the same superscript (see Figure~\ref{fig:labeling}). 
As a result, once we know a generated cube \(J^m\), its superscript \(m\) tells us exactly where \(J\) is located relative to \(J^m\).
In particular, for a fixed \(m\in\{1,\dots,3^d\}\), the map \(J\mapsto J^m\) is one-to-one: different original cubes produce different generated cubes with the same superscript.
This property is essential for our subsequent proof, because it allows us to uniquely recover the original cube \(J\) from any generated cube \(J^m\) with a known superscript \(m\).

\begin{figure}[htbp]
\centering
\begin{tikzpicture}[scale=0.8, every node/.style={font=\small}]
\draw (0,0) rectangle (3,3);
\foreach \x in {1,2} \draw (\x,0) -- (\x,3);
\foreach \y in {1,2} \draw (0,\y) -- (3,\y);
\node at (0.5,2.5) {$J^1$};
\node at (1.5,2.5) {$J^2$};
\node at (2.5,2.5) {$J^3$};
\node at (0.5,1.5) {$J^4$};
\node at (1.5,1.5) {\textbf{$J^5$}};
\node at (2.5,1.5) {$J^6$};
\node at (0.5,0.5) {$J^7$};
\node at (1.5,0.5) {$J^8$};
\node at (2.5,0.5) {$J^9$};
\draw[thick,red] (1,1) rectangle (2,2);
\node[below] at (1.5,0) {$3J$};
\node[above] at (1.5,3.2) {Original cube $J$};

\begin{scope}[xshift=5cm]
\draw (0,0) rectangle (3,3);
\foreach \x in {1,2} \draw (\x,0) -- (\x,3);
\foreach \y in {1,2} \draw (0,\y) -- (3,\y);
\node at (0.5,2.5) {$K^1$};
\node at (1.5,2.5) {$K^2$};
\node at (2.5,2.5) {$K^3$};
\node at (0.5,1.5) {$K^4$};
\node at (1.5,1.5) {\textbf{$K^5$}};
\node at (2.5,1.5) {$K^6$};
\node at (0.5,0.5) {$K^7$};
\node at (1.5,0.5) {$K^8$};
\node at (2.5,0.5) {$K^9$};

\draw[thick,red] (1,1) rectangle (2,2);
\node[below] at (1.5,0) {$3K$};
\node[above] at (1.5,3.2) {Original cube $K$};
\end{scope}

\draw[->, thick] (1.8,3.3) -- (1.8,1.7) node[midway, above, sloped] {};
\draw[->, thick] (6.8,3.3) -- (6.8,1.7) node[midway, above, sloped] {};

\end{tikzpicture}
\caption{
Illustration of the fixed labeling: 
The generated cubes $J^m$ and $K^m$ occupy the same relative position inside $3J$ and $3K$, respectively. 
The central squares are the original cubes \(J\) and \(K\); they also serve as the generated cubes \(J^5\) and \(K^5\).
}\label{fig:labeling}
\end{figure}

\begin{definition}
Fix $1\leq m \leq 3^d$.
Let
	\begin{align*}
	H_{s,m}^0 = \{ J^m:\ J\in\mD_j,\ b_{j-s}\chi_J \neq 0,\ j\in\Z\}.	
	\end{align*}
For $i\geq 1$, define recursively
    \begin{align*}
	H^i_{s,m} = \{J^m\in H^{i-1}_{s,m}:\ \#(G^i_s(J^m)) \geq \lceil C_0 2^{ds} \rceil \},
    \end{align*}
where for $J^m\in H^{i-1}_{s,m}$,
	\begin{align*}
	G_s^i (J^m) = \{K^m\in H^{i-1}_{s,m}:\ J^m\subseteq K^m  \}.
    \end{align*}
Here, $C_0$ is a large constant to be chosen later.
\end{definition}

%
%
%
%
%

In plain words, \(H_{s,m}^0\) collects all generated cubes whose original cubes actually contribute to the bad function. 
For a cube \(J^m\in H_{s,m}^{i-1}\), the set \(G_s^i(J^m)\) consists of all the "ancestors" of \(J^m\) inside \(H_{s,m}^{i-1}\) (including itself). 
By Remark~\ref{three possibilities} with \(n=m\), these ancestor cubes are totally ordered by inclusion, forming a nested structure.
The condition \(\#G_s^i(J^m)\ge\lceil C_0 2^{ds}\rceil\) means that the containment chain below \(J^m\) contains at least that many distinct cubes. 
Thus \(H_{s,m}^i\) selects those cubes that lie sufficiently deep in such a chain.
Figure~\ref{fig:layered_structure} illustrates this nested structure.

\begin{figure}[htbp] 
\centering
\begin{tikzpicture}[
    x=1cm, y=1cm,
    line width = 1 pt,
    every node/.style={font=\small},
    >=stealth
]

\newcommand{\interval}[4]{
  \draw (#1,#3) -- (#2,#3);
  \draw (#1,#3-0.08) -- (#1,#3+0.08);
  \draw (#2,#3-0.08) -- (#2,#3+0.08);
  \ifx&#4& \else \node[anchor=west] at (#2+0.1,#3) {#4}; \fi
}


\node at (2.5,-0.25) {$\vdots$};

\interval{0.5}{4.5}{0}

\interval{2.5}{4.5}{0.5}{$K^m$}

\draw[line width=1.2pt, red] (2.5,1) -- (3.5,1);
\draw[red] (2.5,0.92) -- (2.5,1.08);
\draw[red] (3.5,0.92) -- (3.5,1.08);
\node[anchor=west, red] at (3.6, 1) {$J^m$ };

\interval{3}{3.5}{1.5}{}

\node at (3.25,1.95) {$\vdots$};

\interval{3.25}{3}{2.25}{}

\node at (3.125, 2.75) {$\vdots$};

\draw[decorate,decoration={brace, amplitude=3.5pt, mirror}, line width=1.0pt, red] 
    (5.4, -0.5) -- (5.4, 1) 
    node[midway, right, xshift = 2.5 pt] {$G_s^1(J^m)$};

\draw[decorate,decoration={brace, amplitude=3.5pt}, line width=1.0pt] 
    (0.25, -0.5) -- (0.25, 3) 
    node[midway, left, xshift = -2 pt] {$H^0_{s,m}$};


\node at (10,-0.25) {$\vdots$};    
    
\interval{9}{13}{0}    
    
\interval{9}{11}{0.5}
    
\interval{9}{10}{1}

\node at (9.75,1.55) {$\vdots$};

\interval{9.5}{10}{1.8}
    

\draw[line width=1.2pt, red] (9.75, 2.3) -- (10, 2.3);
\draw[red] (9.75, 2.22) -- (9.75, 2.38);
\draw[red] (10, 2.22) -- (10, 2.38);
\node[anchor=west, red] at (9.975, 2.3) {$L^m$};


\node at (9.86, 2.8) {$\vdots$};

\draw[decorate,decoration={brace, amplitude=3.5pt}, line width=1.0pt] 
    (7.75, -0.5) -- (7.75, 3) 
    node[midway, left, xshift = -1.8 pt] {$H^{i-2}_{s,m}$};
    
\draw[decorate,decoration={brace, amplitude=3.5pt}, line width=1.0pt] 
    (8.825, 0.5) -- (8.825, 3) 
    node[midway, left, xshift = -1.8 pt] {$H_{s,m}^{i-1}$};      

\draw[decorate,decoration={brace, amplitude=3.5pt, mirror}, line width=1.0pt,red] 
    (11.25, 0.5) -- (11.25, 2.3) 
    node[midway, right, xshift = 2.5 pt] {$G^{i}_{s}(L^m)$};    
    
\end{tikzpicture}

\caption{
Schematic of the layered structure in $H_{s,m}^i$, each horizontal segment represents a cube. 
For $J^m\in H^0_{s,m}$, by the inclusion relationship, these cubes $K^m$'s such that $K^m\in H^0_{s,m}$ and $K^m \supseteq J^m$ form a staircase, and $\#G_s^1(J^m)$ counts how many steps (floors) lie under $J^m$ in that staircase.  
If this number is at least $\lceil C_0 2^{ds}\rceil$ (i.e., the staircase is high enough), then $J^m$ belongs to $H_{s,m}^1$.
Similarly, for $i>1$ and $L^m \in H^{i-1}_{s,m}$, we select $L^m$ into $H^{i}_{s,m}$ if its staircase inside $H_{s,m}^{i-1}$ has at least $\lceil C_0 2^{ds}\rceil$ floors. 
} \label{fig:layered_structure}
\end{figure}

The following contents (especially Lemma \ref{|F^i_{s,m}|}) will be needed in the next section, where they are used to obtain an $L^1$ estimate for $b_Q$.
However, since the relevant framework has just been established, it is more natural to state them here rather than interrupt the later argument with a separate construction.

\begin{definition}\label{def of F^i_{s,m}}
Fix $1\leq m \leq 3^d$.
For $i\geq 0$, we set
	\begin{align*}
	F^i_{s,m} = \bigcup_{j\in\Z} \bigcup	_{{\begin{subarray}{c} J\in\mD_{j} \\ J^m\in H^i_{s,m} \end{subarray}}} J^m.
	\end{align*}
\end{definition}

\begin{remark} \label{inclusion relation of H^i_{s,m}, F^i_{s,m}}
From the definitions above, the families $\{H_{s,m}^i\}_{i\ge0}$ and $\{F_{s,m}^i\}_{i\ge0}$ satisfy the following nested inclusions:
    \begin{align*}
    & H^0_{s,m} \supseteq H^1_{s,m} \supseteq \cdots \supseteq H^i_{s,m} \cdots, \\
    & F^0_{s,m} \supseteq F^1_{s,m} \supseteq \cdots \supseteq F^i_{s,m} \cdots.
    \end{align*}
\end{remark}

Because the cubes in $H_{s,m}^i$ are either nested or disjoint (see Remark~\ref{three possibilities} with \(n=m\)), the union $F_{s,m}^i$ is exactly the union of the maximal cubes in $H_{s,m}^i$. 
Moreover, such maximal cubes exist: Lemma~\ref{sum|J|} together with the nested inclusion in Remark~\ref{inclusion relation of H^i_{s,m}, F^i_{s,m}} implies that the total measure of the cubes in $H_{s,m}^i$ is finite. 
In a collection of dyadic cubes with finite total measure, maximal cubes (with respect to inclusion) are always attained.  
Thus, we may write
    \begin{align*}
    F^i_{s,m} = \bigcup_{j\in\Z} \bigcup	_{{\begin{subarray}{c} J\in\mD_{j} \\ J^m\in \mH^i_{s,m} \end{subarray}}} J^m, 
    \end{align*}
where
    \begin{align*}
    \mH^i_{s,m} := \{\text{the maximal cubes in}\ H^i_{s,m}\}.
    \end{align*}
Obviously, these maximal cubes in $\mH^i_{s,m}$ are pairwise disjoint.

For a maximal cube \(J^m\in\mH_{s,m}^i\), we have the exact count
    \begin{align} \label{exactly C_0 2^{ds}}
    \#G_s^i(J^m)=\lceil C_0 2^{ds}\rceil.
    \end{align}
Indeed, by definition \(\#G_s^i(J^m)\ge\lceil C_0 2^{ds}\rceil\). 
If it were strictly larger, then there would exist a strictly larger cube \(K^m\in H_{s,m}^{i-1}\) also satisfying the condition, which would imply \(K^m\in H_{s,m}^i\) and contradict the maximality of \(J^m\).
We illustrate the construction of the family $\{ \mH^i_{s,m} \}_{i\geq 0}$ in Figure \ref{figure: H^i_{s,m}}.


\begin{figure}[htbp]
\centering
\begin{tikzpicture}[
    x=1cm, y=1cm,
    line width= 1 pt,
    every node/.style={font=\small},
    >=stealth
]

\newcommand{\interval}[4]{
  \draw (#1,#3) -- (#2,#3);
  \draw (#1,#3-0.08) -- (#1,#3+0.08);
  \draw (#2,#3-0.08) -- (#2,#3+0.08);
  \ifx&#4& \else \node[anchor=west] at (#2+0.1,#3) {#4}; \fi
}

\interval{1}{9}{0}{the maximal cube in $H^0_{s,m}$}
\draw[<-, line width=1.0pt] (0.2, 0) -- (0.2, 4.5) node[midway, left, align=center] {$H^0_{s,m}$ };
\draw[dashed] (0,0) -- (1,0);
\node at (3, 0.55) {$\vdots$};

\interval{1}{5}{1}{the maximal cube in $H^1_{s,m}$}
\draw[<-, line width=1.0pt] (0.8, 1) -- (0.8, 4.5) node[midway, right, align=center] {$H^1_{s,m}$ };
\draw[dashed] (0.5,1) -- (1,1);
\node at (4, 1.5) {$\vdots$};
\node at (4, 2) {$\vdots$};

\interval{3}{5}{2.25}{the maximal cube in $H^{i-1}_{s,m}$}
\draw[<-, line width=1.0pt] (2.25, 2.25) -- (2.25, 4.5) node[midway, right, align=center] {$H^{i-1}_{s,m}$ };
\draw[dashed] (1.95,2.25) -- (3,2.25);
\node at (4.5, 2.75) {$\vdots$};

\interval{4}{5}{3.25}{the maximal cube in $H^i_{s,m}$}
\draw[<-, line width=1.0pt] (3.5, 3.25) -- (3.5, 4.5) node[midway, right, align=center] {$H^i_{s,m}$ };
\draw[dashed] (3.25, 3.25) -- (4, 3.25);
\node at (4.5, 3.7) {$\vdots$};


\draw[decorate,decoration={brace, amplitude=3.5pt, mirror}, line width=1.0pt, red] 
    (9.3, 0.2) -- (9.3, 1.1) 
    node[midway, right, xshift=6pt] { \textbf{exactly} $\lceil C_0 2^{ds}\rceil$ \textbf{layers}};

\draw[decorate,decoration={brace, amplitude=3.5pt, mirror}, line width=1.0pt, red] 
    (9.36, 2.25) -- (9.36, 3.25) 
    node[midway, right, xshift=6pt] { \textbf{exactly} $\lceil C_0 2^{ds}\rceil$ \textbf{layers} };

\end{tikzpicture}
\caption
{
The bottom layer of each $H^i_{s,m}$ depends on the maximal cube.
We select a family of maximal cubes with a nested inclusion to illustrate this construction.
Then, the number of layers between two adjacent layers is exactly $\lceil C_0 2^{ds}\rceil$.
}\label{figure: H^i_{s,m}}
\end{figure}

Having defined the nested families $\{F_{s,m}^i\}_{i\geq 0}$, we now prove an exponential decay estimate for their measures. This is a key geometric fact needed later.
\begin{lemma} \label{|F^i_{s,m}|}
Fix $1\leq m \leq 3^d$.
For $i\geq 1$,
    \begin{align*}
    |F^i_{s,m}| \lesssim 3^{-i} \lambda^{-1} C_{\Omega} \|f\|_{L^1(\R^d)}.
    \end{align*}
\end{lemma}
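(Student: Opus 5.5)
We argue by induction on $i$, and the goal is the one-step contraction
\[
|F^{i}_{s,m}|\le \tfrac13 |F^{i-1}_{s,m}|,\qquad i\ge 1 .
\]
This gives the claim once we have the base bound $|F^0_{s,m}|\lesssim 2^{ds}\lambda^{-1}C_\Omega\|f\|_{L^1}$. That base bound is slightly too weak by the factor $2^{ds}$. To remove it, we will prove the case $i=1$ directly, with the sharper bound $|F^1_{s,m}|\lesssim 3^{-1}\lambda^{-1}C_\Omega\|f\|_{L^1}$. For this we use that $C_0$ is large: we will show $|F^1_{s,m}|\le (C_02^{ds})^{-1}\sum_{J^m\in H^0_{s,m}}|J^m|$. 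Since $|J^m|=|J|$, Lemma~\ref{sum|J|} then gives $|F^1_{s,m}|\lesssim C_0^{-1}\lambda^{-1}C_\Omega\|f\|_{L^1}$, which is at most $3^{-1}$ times the desired constant once $C_0$ is chosen large.

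\textbf{Counting argument for one step.} We show that for every $i\ge1$,
\[
|F^{i}_{s,m}|\le \frac{1}{\lceil C_02^{ds}\rceil}\sum_{J^m\in H^{i-1}_{s,m}}|J^m| .
\]
Take a point $x\in F^i_{s,m}$. Then $x$ lies in some $J^m\in H^i_{s,m}$, and $\#G^i_s(J^m)\ge\lceil C_02^{ds}\rceil$. Every $K^m\in G^i_s(J^m)$ contains $J^m$, hence contains $x$. So
\[
\sum_{K^m\in H^{i-1}_{s,m}}\chi_{K^m}(x)\ge \lceil C_02^{ds}\rceil \quad\text{on } F^i_{s,m},
\]
and integrating (a Chebyshev-type argument) gives the displayed bound. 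For $i=1$, combining this with Lemma~\ref{sum|J|} gives the $i=1$ case above.

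\textbf{Passing from $i-1$ to $i$ for $i\ge2$.} Here we need $\sum_{J^m\in H^{i-1}_{s,m}}|J^m|\lesssim 2^{ds}|F^{i-1}_{s,m}|$, with a constant independent of $i$. Every cube in $H^{i-1}_{s,m}$ lies inside some maximal cube $L^m\in\mH^{i-1}_{s,m}$, and these maximal cubes are disjoint. So it suffices to bound, for each $L^m$, the sum of $|J^m|$ over $J^m\in H^0_{s,m}$ with $J^m\subseteq L^m$. Since the map $J\mapsto J^m$ is injective and $J^m$ is a translate of $J$ at the same scale, $J^m\subseteq L^m$ forces $J\subseteq 3L^m$. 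Lemma~\ref{sum|J|} with $A=3L^m$ then bounds this sum by $\lesssim 2^{ds}3^d|L^m|$. Summing over $L^m$ gives $\lesssim 2^{ds}|F^{i-1}_{s,m}|$. Hence
\[
|F^i_{s,m}|\le \frac{C\,2^{ds}}{C_02^{ds}}\,|F^{i-1}_{s,m}|\le \tfrac13|F^{i-1}_{s,m}|
\]
for $C_0$ large, and the induction closes.

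\textbf{Main obstacle.} The delicate point is the localization step just described. We must make sure that Lemma~\ref{sum|J|} applied to $A=3L^m$ really controls every generated cube inside $L^m$. Concretely, we must check that the original cube $J$ of such a $J^m$ lies in $3L^m$, which holds because $J\subseteq 3J^m\subseteq 3L^m$. The constant produced, $3^d$ times the implicit constant in Lemma~\ref{sum|J|}, must be independent of $i$ and $s$, so that $C_0$ can absorb it. Using all of $H^0_{s,m}\supseteq H^{i-1}_{s,m}$ is harmless here, since it only enlarges the sum we are bounding.
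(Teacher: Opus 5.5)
Your proposal is correct and follows essentially the same route as the paper. Both arguments use three ingredients: the overlap-counting bound $|F^i_{s,m}|\le \lceil C_02^{ds}\rceil^{-1}\sum_{J^m\in H^{i-1}_{s,m}}|J^m|$, Lemma~\ref{sum|J|} for $i=1$, and, for $i\ge 2$, the inclusion $J\subseteq 3J^m$ to get $\sum_{J^m\in H^{i-1}_{s,m}}|J^m|\lesssim 3^d2^{ds}|F^{i-1}_{s,m}|$. The differences are cosmetic: you apply Lemma~\ref{sum|J|} cube-by-cube with $A=3L^m$ over the disjoint maximal cubes $L^m\in\mH^{i-1}_{s,m}$, which makes precise the paper's set $3F^{i-1}_{s,m}$; and you use a pointwise Chebyshev count where the paper uses the exact ancestor count for maximal cubes.
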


\begin{proof}
%
%
%
%

We first handle $i=1$.
For a cube $K \in H^0_{s,m}$, by $|K^m| = |K|$ and Lemma \ref{sum|J|}, we have
    \begin{align*}
    \sum_{K^m\in H^0_{s,m}} |K^m|
    = \sum_{k\in\Z} \sum_{{\begin{subarray}{c} K\in \mD_k,  \\ b_{k-s}\chi_K \neq 0 \end{subarray}}} |K| 
	\lesssim 2^{ds} \lambda^{-1} C_{\Omega} \|f\|_{L^1(\R^d)}.
	\end{align*}
This result, combined with (\ref{exactly C_0 2^{ds}}), gives
    \begin{align*}
    |F_{s,m}^1|
    = \sum_{j\in\Z} \sum_{{\begin{subarray}{c} J\in\mD_{j} \\ J^m\in \mH^1_{s,m} \end{subarray}}} |J^m| 
    \leq \lceil C_0 2^{ds} \rceil^{-1} \sum_{K^m\in H^0_{s,m} } |K^m|
    \lesssim 3^{-1} \lambda^{-1} C_{\Omega} \|f\|_{L^1(\R^d)},
    \end{align*}
provided $C_0$ is chosen large enough.

Now assume $i\geq 2$.
For any $K^m\in H^{i-1}_{s,m}$, we have $K^m \subseteq F^{i-1}_{s,m}$, so its original cube $K$ satisfies $K \subseteq 3K^m \subseteq 3F^{i-1}_{s,m}$.
Consequently, by $H^i_{s,m} \subseteq H^0_{s,m}$ (see Remark \ref{inclusion relation of H^i_{s,m}, F^i_{s,m}}), $|K^m| = |K|$, and Lemma \ref{sum|J|},   
    \begin{align*}
    \sum_{K^m\in H^{i-1}_{s,m} } |K^m|
    \leq \sum_{{\begin{subarray}{c} K^m \in H^0_{s,m} \\ K \subseteq 3F^{i-1}_{s,m}\end{subarray}}} |K^m|
    = \sum_{k\in\Z} \sum_{{\begin{subarray}{c} K\in \mD_k, K \subseteq 3F^{i-1}_{s,m} \\ b_{k-s}\chi_K \neq 0  \end{subarray}}} |K|  
    \lesssim 2^{ds} |3F^{i-1}_{s,m}|.
    \end{align*}
Therefore, 
    \begin{align*}
    |F_{s,m}^i|
    &= \sum_{j\in\Z} \sum_{J^m\in\mH^i_{s,m}} |J^m| 	
    \leq \lceil C_0 2^{ds} \rceil^{-1} \sum_{K^m\in H^{i-1}_{s,m} } |K^m|
    \lesssim C_0^{-1} 3^d |F^{i-1}_{s,m}| \\
    &\lesssim 3^{-i} \lambda^{-1} C_{\Omega} \|f\|_{L^1(\R^d)},
    \end{align*}
again with a sufficiently large $C_0$.
This completes the proof.
\end{proof}

\medskip
\subsection{Proof of Lemma \ref{lemma T_{J,2}^s}}
We now return to the proof of Lemma \ref{lemma T_{J,2}^s}.
From the nested inclusions of $\{H^i_{s,m}\}_{i\geq 0}$ in Remark~\ref{inclusion relation of H^i_{s,m}, F^i_{s,m}}, we have the disjoint partition
    \begin{align*}
    H^0_{s,m} = \bigcup_{i\geq 1} \big( H^{i-1}_{s,m} \backslash H^i_{s,m} \big).
    \end{align*}
Fix a $j\in\Z$. 
Using the support property \eqref{supp T_J^s h}, the partition of $3J$, and the above decomposition, we obtain
    \begin{align*}
    \sum_{J\in\mD_j} T_{J,2}^s b_{j-s}
    &= \sum_{1\leq m\leq 3^d} \sum_{i\geq 1} \sum_{{\begin{subarray}{c} J\in\mD_{j} \\ J^m\in H^{i-1}_{s,m}\backslash H^i_{s,m} \end{subarray}}} \big( T_{J,2}^s b_{j-s} \big)\chi_{J^m},
    \end{align*}
which implies
    \begin{align} \label{T_{J,2}^s b_{j-s}}
    \sup_{k\in\Z} \Big| \sum_{j\geq k} \sum_{J\in \mD_j} T_{J,2}^s b_{j-s} \Big|
    \leq \sum_{1\leq m\leq 3^d} \sum_{i\geq 1} \sup_{k\in\Z} \Big| \sum_{j\geq k} \sum_{{\begin{subarray}{c} J\in\mD_{j} \\ J^m\in H^{i-1}_{s,m}\backslash H^i_{s,m} \end{subarray}}} \big( T_{J,2}^s b_{j-s} \big)\chi_{J^m} \Big|.
    \end{align}
Thus, it remains to control the contribution from each layer \(H_{s,m}^{i-1}\setminus H_{s,m}^i\) separately. 
This is the content of the following lemma.

\begin{lemma} \label{lemma (T_{J_2}^s b_{j-s}) chi_{J^m}}
There exists $\delta>0$ and a positive integer $N_0$ such that for all $1\leq m \leq 3^d$ and $i\geq 1$, we have
    \begin{align*}
    \Big\| \sup_{k\in\Z} & \Big| \sum_{j\geq k} \sum_{{\begin{subarray}{c} J\in\mD_{j} \\ J^m\in H^{i-1}_{s,m}\backslash H^i_{s,m} \end{subarray}}} \big( T_{J,2}^s b_{j-s} \big)\chi_{J^m} \Big| \Big\|_{L^2(\R^d)} \\
    &\lesssim 3^{-\frac{i}{2}} 2^{-\delta s} s^{ \frac{N_0}{2} \gamma^{-1}+1}  \big( \lambda C_{\Omega} \|f\|_{L^1(\R^d)} \big)^{1/2}.
    \end{align*}	
\end{lemma}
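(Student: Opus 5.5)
We have to bound the maximal function over truncations $k$ of a sum over $j\ge k$ of pieces living on the generated cubes $J^m\in H^{i-1}_{s,m}\setminus H^i_{s,m}$. The key structural input is that, for fixed $m$ and $i$, each point $x$ lies in fewer than $\lceil C_0 2^{ds}\rceil$ such cubes. Indeed, the cubes of $H^{i-1}_{s,m}$ containing $x$ form a chain. Every cube $J^m$ in the layer has $\#G^i_s(J^m)<\lceil C_0 2^{ds}\rceil$, and all layer cubes above it in the chain belong to $G^i_s(J^m)$. Hence for each $x$ only $N\lesssim C_0 2^{ds}$ values of $j$ contribute.

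\textbf{Step 1: linearize the maximal function.} Fix $x$ and write $F_j(x)=\sum_{J} (T_{J,2}^s b_{j-s})\chi_{J^m}(x)$, where the sum runs over $J\in\mD_j$ with $J^m$ in the layer. At each $x$ the sequence $(F_j(x))_j$ has at most $N$ nonzero entries. We apply the Rademacher--Menshov theorem to the sequence of nonzero entries, ordered by scale. In the form used here it gives
\[
\sup_k\Bigl|\sum_{j\ge k}F_j\Bigr|\lesssim \log N\cdot\sup_{\epsilon}\Bigl|\sum_j \epsilon_j F_j\Bigr|,
\]
possibly in the dyadic-block form, a sum over $O(\log N)$ levels of square functions of block sums. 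Since $\log N\lesssim s$, this produces the factor $s$ in the bound. It thus suffices to bound, uniformly in measurable selections $\epsilon_u(x)\in\{0,1\}$ adapted to dyadic blocks $u$, the $L^2$ norm of $\sum_u \epsilon_u(x)\beta_u(x)$. Here $\beta_u$ is the block sum over consecutive scales in the chain, and we use Cauchy--Schwarz over the at most $O(\log N)$ levels.

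\textbf{Step 2: $L^2$ estimate with decay $2^{-\delta s}$.} This is the content of the expected Lemma on $\epsilon_u\beta_u$. Write each $T_{J,2}^s b_{j-s}$ by freezing the $x$-variable of $K_j^s$ at the center of $J$, or by expanding $K_j^s$ in a Fourier series in $x$ on $3J$. The smoothing at scale $2^{j-s_\gamma}$ makes this expansion converge with $O(2^{N_0 s_\gamma})\approx s^{N_0\gamma^{-1}}$ essentially relevant terms, and this accounts for the factor $s^{\frac{N_0}{2}\gamma^{-1}}$ after taking square roots. Each resulting operator is a convolution-type operator $h\mapsto(\Omega_2\,\psi_j)*h$ with a smooth radial cutoff, acting on $b_{j-s}$, which has mean zero on cubes of side $2^{j-s}$. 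We then carry out a microlocal decomposition as in \cite{Lai2025weak}. We decompose $\Omega_2$ into pieces with angular localization at scale $2^{-\gamma' s}$, apply a Littlewood--Paley decomposition in frequency, and split into a region where the Fourier transform of the kernel decays (stationary-phase estimate $|\widehat{d\sigma}|\lesssim (2^j|\xi|)^{-(d-1)/2}$), a low-frequency region where the cancellation of $b_Q$ gives a gain of $2^{-s}$, and the remaining pieces. Because $\|\Omega_2\|_\infty\le 2^{\iota s}\|\Omega\|_1$ with $\iota$ small, the $L^2$ norm of each piece is controlled by
\[
2^{-\delta s}\,\|\Omega\|_1 \Bigl(\sum_j\|b_{j-s}\|_{L^1}\|b_{j-s}\|_\infty\Bigr)^{1/2}.
\]
The quantity $\|b\|_\infty$ is not bounded directly, so we use $\|b_Q\|_1\lesssim\lambda|Q|/C_\Omega$ together with the usual $L^2$ almost-orthogonality, producing $(\lambda C_\Omega\|f\|_1)^{1/2}$-type quantities. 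This step splits into the four technical estimates, and I expect it to be the main obstacle. In particular, one must keep the interaction between the selector $\epsilon_u(x)$, which depends on $x$, and the Fourier analysis harmless. I would first try to handle this by absorbing the selectors into the localization to cubes $J^m$: the selectors are constant on $J^m$ up to the next dyadic level.

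\textbf{Step 3: gain $3^{-i/2}$.} Every layer cube lies in $F^{i-1}_{s,m}$, and the only $b_Q$ that contribute are those inside the original cubes $J\subset 3J^m\subset 3F^{i-1}_{s,m}$. The $L^1$-mass entering the $L^2$ bound is therefore $\lambda C_\Omega^{-1}$ times the measure of these $Q$, which by Lemma~\ref{sum|J|} and Lemma~\ref{|F^i_{s,m}|} (applied with $i-1$, or directly for $i=1$) is $\lesssim 3^{-i}\lambda^{-1}C_\Omega\|f\|_1$. Taking square roots gives the factor $3^{-i/2}$. Combining the three steps yields the stated bound.
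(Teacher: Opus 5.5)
Your skeleton is right: fewer than $\lceil C_0 2^{ds}\rceil$ layer cubes pass through each point, Rademacher--Menshov with $\log(2+u_0)\lesssim s$ supplies the extra factor $s$, a signed $L^2$ estimate gives $2^{-\delta s}s^{\frac{N_0}{2}\gamma^{-1}}$, and Lemma~\ref{|F^i_{s,m}|} applied to $F^{i-1}_{s,m}$ via \eqref{sum|b_Q|_1} gives $3^{-i}$. However, two steps do not work as written, and the second is the heart of the proof.

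\textbf{The linearization is misformulated.} Rademacher--Menshov is not a pointwise inequality. You also should not reduce to selectors $\epsilon_u(x)$ that depend on $x$ and then try to carry them through the Fourier analysis. Your claim that they are constant on $J^m$ is false: the block selectors in the proof of Rademacher--Menshov depend on the maximizing truncation at $x$. What is needed is a global index of chain positions. The paper defines $\mathcal{L}_1$ as the maximal cubes of $H^{i-1}_{s,m}\setminus H^i_{s,m}$ and $\mathcal{L}_{u+1}$ as the maximal cubes among the rest. Then pointwise $\sup_k|\sum_{j\ge k}\cdots|=\sup_v|\sum_{u\le v}\beta_u|$, and Lemma~\ref{Rademacher-Menshov} reduces matters to constant signs $\epsilon_u\in\{-1,1\}$. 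Such signs are constant on each generated cube, so $\sum_u\epsilon_u\beta_u=\sum_j\sum_Q\epsilon_{J_Q}(T^s_jb_Q)\chi_{J_Q^m}$ is linear in $b$ with one fixed sign per $Q$. This is what makes the later kernel and multiplier arguments legitimate. Indexing by the nonzero entries at $x$, as you propose, would make the sign vary inside a cube.

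\textbf{Step 2, the signed $L^2$ estimate, is left unproved, and your sketch misses the key mechanism.} The factor $\chi_{J^m}$ is a sharp spatial cutoff at scale $2^j$; it is exactly what encodes the truncation. Multiplying $T^s_jb_Q$ by it destroys the frequency decay any microlocal argument needs, and you never treat it. The paper handles it in four pieces:
\begin{itemize}
\item It replaces $\chi_{J^m}$ by $\widetilde\chi_{J^m}=P_{j-s\kappa}\chi_{J^m}$.
\item The error $I_1$ lives in a $2^{j-s\kappa}$-neighbourhood of $\partial J^m$ and gains $2^{-s\kappa}$ in $L^1$.
\item The low-frequency piece $I_2$ gains from the cancellation of $b_Q$.
\item In $I_4$, the $N_0$-fold radial integration by parts loses $2^{s\kappa N_0}$ from $\widetilde\chi$ and $s^{2\gamma^{-1}N_0}$ from $K^s_j$. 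This loss is beaten only because the frequency split is at $k\le j-s\sigma$ with $\iota\ll\mu\ll\kappa\ll\sigma$.
\end{itemize}

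Several of your other ingredients also do not match what is needed. The decay away from $(e^s_v)^\perp$ does not come from a stationary-phase bound, which a rough $\Omega$ does not provide. It comes from radial integration by parts inside each angular cap, and the part near the hyperplane ($I_3$) is handled by Plancherel and the overlap bound \eqref{Phi(2^s delta <e^s_v, xi/|xi|>)}. The paper never uses $\|b\|_\infty$. It proves $L^1$ bounds and $L^3$ bounds, the latter via the counting argument of Lemma~\ref{sum sum |T^s_j b_Q|} using $\langle|b_Q|\rangle_Q\lesssim\lambda/C_\Omega$, and interpolates; that is where the loss from $\|\Omega_2\|_\infty\le2^{\iota s}\|\Omega\|_1$ is absorbed.

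Freezing $x$ at $c(J)$ cannot work, since $|x-c(J)|\sim2^j\sim|x-y|$. A Fourier expansion of $K^s_j$ in $x$ is a plausible alternative to the paper's direct derivative bounds on $K^s_j$. However, the modulations $e^{i\langle n,x\rangle2^{-j}}$ it produces must be handled exactly like $\widetilde\chi_{J^m}$, and you have not done that.
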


We first assume Lemma \ref{lemma (T_{J_2}^s b_{j-s}) chi_{J^m}} holds, and use it to prove Lemma \ref{lemma T_{J,2}^s}.
Combining \eqref{T_{J,2}^s b_{j-s}} with Lemma~\ref{lemma (T_{J_2}^s b_{j-s}) chi_{J^m}}, we get
    \begin{align*}
    &\Big\| \sup_{k\in\Z} \Big| \sum_{j\geq k} \sum_{J\in\mD_j} T_{J,2}^s b_{j-s} \Big| \Big\|_{L^2(\R^d)} \\
	&\quad \leq \sum_{1\leq m\leq 3^d} \sum_{i\geq 1}  \Big\| \sup_{k\in\Z} \Big| \sum_{j\geq k} \sum_{{\begin{subarray}{c} J\in\mD_{j} \\ J^m\in H^{i-1}_{s,m}\backslash H^i_{s,m} \end{subarray}}} \big( T_{J,2}^s b_{j-s} \big)\chi_{J^m} \Big\|_{L^2(\R^d)}\\
	&\quad \lesssim \sum_{i\geq 1} 3^{-\frac{i}{2}} 2^{-\delta s} s^{ \frac{N_0}{2} \gamma^{-1}+1}  \big( \lambda C_{\Omega} \|f\|_{L^1(\R^d)} \big)^{1/2} \\ 
	&\quad \lesssim 2^{-\delta s} s^{ \frac{N_0}{2} \gamma^{-1}+1}  \big( \lambda C_{\Omega} \|f\|_{L^1(\R^d)} \big)^{1/2},
    \end{align*}
which gives the desired bound in Lemma \ref{lemma T_{J,2}^s}.

\medskip
\section{Proof of Lemma \ref{lemma (T_{J_2}^s b_{j-s}) chi_{J^m}}} \label{Section: Proof of Lemma {lemma (T_{J_2}^s b_{j-s}) chi_{J^m}}}
In this section, we fix $i\ge 1$ and decompose the set $H_{s,m}^{i-1}\setminus H_{s,m}^i$ into layers of maximal cubes. 
This yields a finite collection of layers, which allows us to decompose the operator 
in Lemma~\ref{lemma (T_{J_2}^s b_{j-s}) chi_{J^m}} according to these layers. 
Consequently, we can linearize the supremum in the maximal operator via the Rademacher-Menshov theorem (Lemma~\ref{Rademacher-Menshov}). 
The required estimate then reduces to controlling a linear combination of the operators $\beta_u$ (see Lemma~\ref{epsilon_u beta_u}). 

\medskip
\subsection{Linearization of the maximal operator}
Fix $i\ge 1$. 
We begin by decomposing the set $H_{s,m}^{i-1}\setminus H_{s,m}^i$ into layers of maximal cubes by the following constructions.
    \begin{definition}
	Set
	\begin{align*}
	&\mL_1 = \{\text{the maximal cubes in}\ H^{i-1}_{s,m} \backslash H^i_{s,m}\}.
	\end{align*}
For $u\geq 1$, define recursively
    \begin{align*}
	\mL_{u+1} = \{\text{the maximal cubes in}\ \big(H^{i-1}_{s,m} \backslash H^i_{s,m} \big) \backslash \bigcup_{v=1}^u \mL_v\}.
	\end{align*}
    \end{definition}

The following properties are immediate from the definition; we omit the straightforward verification.
    \begin{enumerate}[(i)]
    \item $\mL_u = \varnothing$ when $u> u_0 := \lceil C_0 2^{ds} \rceil$.
    \item $\bigcup\limits_{u=1}^{u_0} \mL_u = H^{i-1}_{s,m} \backslash H^i_{s,m} $.
    \item Cubes in $\mL_u$ ($1\le u\le u_0$) are disjoint. 
    \end{enumerate}
Using these properties, for any $x$ in the support of $\sum\limits_{j\ge k}\;\sum\limits_{\substack{J\in\mD_j \\ J^m\in H_{s,m}^{i-1}\setminus H_{s,m}^i}} 
    \bigl( T_{J,2}^s b_{j-s} \bigr)\chi_{J^m}$, there is at most one cube of $\mL_u$ containing $x$ for each $1\le u\le u_0$; if such a cube exists, we denote it by $J^m_{u,x}$. 
Thus, we obtain a nested sequence $\{J^m_{u,x}\}_{1\le u\le u_0}$ (where some indices may be missing) such that 
    \[
    x\in J^m_{u_0,x} \subseteq J^m_{u_0-1,x} \subseteq \cdots \subseteq J^m_{1,x},
    \quad \text{and} \quad
    \{J^m_{u,x}\}_{1\le u\le u_0}\subseteq H_{s,m}^{i-1}\setminus H_{s,m}^i.
    \]

\begin{figure}[htbp]
\centering
\begin{tikzpicture}[
    x=1cm, y=1cm,
    line width= 1 pt,
    every node/.style={font=\small},
    >=stealth
]

\newcommand{\interval}[3]{%
  \draw (#1,#3)--(#2,#3);
  \draw (#1,#3-0.08)--(#1,#3+0.08);
  \draw (#2,#3-0.08)--(#2,#3+0.08);
}

\node[left] at (0,2.5) {$\mathcal L_{u_0}$};
\node[left] at (0,1) {$\mathcal L_{2}$};
\node[left] at (0,0) {$\mathcal L_{1}$};

\interval{1}{1.4}{2.5}
\interval{2.2}{2.7}{2.5}

\draw[red] (3.2,2.5) -- (4.3,2.5);
\draw[red] (3.2,2.42) -- (3.2,2.58);
\draw[red] (4.3,2.42) -- (4.3,2.58);

\interval{6.0}{6.5}{2.5}
\interval{9.0}{9.3}{2.5}

\interval{1}{1.8}{1}

\draw[red] (2.3,1) -- (4.3,1);
\draw[red] (2.3,0.92) -- (2.3,1.08);
\draw[red] (4.3,0.92) -- (4.3,1.08);

\interval{6.0}{7.0}{1}
\interval{7.8}{8.3}{1}
\interval{9.0}{9.5}{1}

\draw[red] (1,0) -- (4.3,0);
\draw[red] (1,-0.08) -- (1,0.08);
\draw[red] (4.3,-0.08) -- (4.3,0.08);
\interval{6.0}{8.3}{0}
\interval{9.0}{10.0}{0}

\node[left] at (-0.2, 1.85) {$\vdots$};
\node at (2.65, 1.85) {$\vdots$};

\draw[dashed, red] (3.5, 2.5) -- (3.5, 0);
\node at (3.5, 0) {\textcolor{red}{$\bullet$}};
\node at (3.5, 1) {\textcolor{red}{$\bullet$}};
\node at (3.5, 2.5) {\textcolor{red}{$\bullet$}};
\node at (3.5, -0.25) {\textcolor{red}{$x$}};
\node at (4.85, 0) {\textcolor{red}{$J^m_{1,x}$}};
\node at (4.85, 1) {\textcolor{red}{$J^m_{2,x}$}};
\node at (4.85, 2.5) {\textcolor{red}{$J^m_{u_0,x}$}};

\end{tikzpicture}
\caption{
Illustration of the nested cubes \(J^m_{u,x}\). 
The layers \(\mL_1,\mL_2,\dots,\mL_{u_0}\) are shown as horizontal segments.
For a fixed point \(x\), the dots mark the positions of \(x\) and the corresponding cubes containing it.
}
\end{figure}

Hence,
    \begin{align*}
    \sup_{k\in\Z} \Big| \sum_{j\geq k} \sum_{{\begin{subarray}{c} J\in\mD_{j} \\ J^m\in H^{i-1}_{s,m}\backslash H^i_{s,m} \end{subarray}}} \big( T_{J,2}^s b_{j-s} \big)\chi_{J^m} (x) \Big| 
    = \sup_{1\leq v\leq u_0} \Big| \sum_{1\leq u\leq v} \big( T_{J_{u,x},2}^s b_{j-s} \big)\chi_{J_{u,x}^m} (x)  \Big|.
    \end{align*}
where $j_u$ is the integer such that the original cube $J$ of $J_{u,x}^m$ lies in $\mD_{j_u}$.

Now define for each $u=1,\dots,u_0$,     
    \begin{align*}
    \beta_u = \sum_{j\in\Z}  \sum_{{\begin{subarray}{c} J\in\mD_{j} \\ J^m\in \mL_u \end{subarray}}} \big( T_{J,2}^s b_{j-s} \big)\chi_{J^m}.
    \end{align*} 
Then the estimate required in Lemma~\ref{lemma (T_{J_2}^s b_{j-s}) chi_{J^m}} is equivalent to proving    
    \begin{align} \label{sum beta_u}
    \Big\| \sup_{1\leq v\leq u_0} \big| \sum_{1\leq u\leq v} \beta_u \big|  \Big\|_{L^2(\R^d)}
    \lesssim 3^{-\frac{i}{2}} 2^{-\delta s} s^{ \frac{N_0}{2} \gamma^{-1}+1}  \big( \lambda C_{\Omega} \|f\|_{L^1(\R^d)} \big)^{1/2},
    \end{align}     
which is a direct consequence of the following lemma together with the Rademacher-Menshov theorem (Lemma~\ref{Rademacher-Menshov}). 
\begin{lemma} \label{epsilon_u beta_u}
For any choice of $\epsilon_u\in\{-1, 1\}$ $(1\leq u\leq u_0)$, there exists a positive constant $\delta$ and a positive integer $N_0$ such that
    \begin{align*}
     \Big\| \sum_{1\leq u\leq u_0} \epsilon_u \beta_u  \Big\|_{L^2(\R^d)}
    \lesssim 3^{-\frac{i}{2}} 2^{-\delta s} s^{ \frac{N_0}{2} \gamma^{-1}}  \big( \lambda C_{\Omega} \|f\|_{L^1(\R^d)} \big)^{1/2}.
    \end{align*}
\end{lemma}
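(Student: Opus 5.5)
We follow the Seeger-type strategy used in the convolution case \cite{Lai2025weak}, adapted to the non-convolution kernel $K_j^s$. Write $\beta:=\sum_u\epsilon_u\beta_u$. The first step is to exploit the nested-layer structure. Since the cubes in each $\mL_u$ are disjoint and lie in $H^{i-1}_{s,m}\setminus H^i_{s,m}$, every point lies in at most $u_0\approx C_0 2^{ds}$ of the cubes $J^m$ that occur. A crude orthogonality argument would therefore lose a factor $2^{ds}$, so we cannot simply bound each $\beta_u$ and sum. Instead we expand
\[
\|\beta\|_{L^2}^2=\sum_{u,u'}\epsilon_u\epsilon_{u'}\langle\beta_u,\beta_{u'}\rangle ,
\]
and write $\beta=\sum_j\sum_{J}\epsilon_{u(J)}(T^s_{J,2}b_{j-s})\chi_{J^m}$, where $u(J)$ is the unique layer index with $J^m\in\mL_{u(J)}$. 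This is a single sum over cubes, and each term carries a fixed sign and a cutoff.

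The second step is the microlocal decomposition of the $L^2$ bound. Following Seeger, we decompose the sphere into caps $\{\theta_\nu^{j}\}$ of diameter about $2^{-s(1-\varepsilon)}$. For each cap we insert a smooth angular cutoff $\Gamma_\nu(x-y)$ and a frequency projection $P^{j}_\nu$ onto a cone around $\theta_\nu$ at frequency scale above $2^{-j+\varepsilon s}$. This gives a splitting
\[
\beta=\beta^{\rm low}+\sum_\nu(\beta_\nu-P_\nu\beta_\nu)+\sum_\nu P_\nu\beta_\nu .
\]
The frequency-localized parts are nearly orthogonal in $\nu$, so we estimate $\sum_\nu\|P_\nu\beta_\nu\|_2^2$. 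We use the bound $|\Omega_2|\le 2^{\iota s}\|\Omega\|_1$ and the smoothness of $K^s_j$ at scale $2^{j-s_\gamma}$. The latter gives symbol derivative bounds that lose at most powers of $2^{s_\gamma}\approx s^{2/\gamma}$, and this is the source of the factor $s^{N_0\gamma^{-1}/2}$. The $L^2$ norm is then reduced to an $L^1\times L^\infty$ pairing of the $b_Q$'s. Using $\|b_Q\|_1\lesssim\lambda C_\Omega^{-1}|Q|$, we obtain a bound $\lambda\sum_Q\|b_Q\|_1$ restricted to cubes $Q$ whose parent $J$ has $J^m$ in the current layer. Those $Q$ lie in $3F^{i-1}_{s,m}$, so Lemma~\ref{|F^i_{s,m}|} produces the factor $3^{-i}\lambda^{-1}C_\Omega\|f\|_1$. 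Together with the $\lambda$ in front, this gives the required $3^{-i}\lambda C_\Omega\|f\|_1$ after squaring.

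I expect the non-convolution nature to be the main obstacle: the operators $T^s_{J,2}$ are not Fourier multipliers. I would first freeze the $x$-variable in $K_j^s$, expanding $K_j^s(x,y)$ in a Fourier series in $x$ over the cube $3J$. Because of the $2^{j-s_\gamma}$ smoothing, this needs only about $2^{C s_\gamma}$ modes, each of which is a convolution-type piece. The resulting polynomial losses in $s$ are absorbed into $s^{N_0\gamma^{-1}/2}$. The remaining error terms are the four estimates \eqref{estimate for I_1}--\eqref{estimate for I_4}:
\begin{enumerate}[(1)]
\item the low-frequency part, controlled by the cancellation $\int b_Q=0$ and the smoothness of $K_j^s$;
\item the angular-truncation error $\beta_\nu-P_\nu\beta_\nu$, handled by integration by parts in the direction transverse to $\theta_\nu$, which yields decay $2^{-Ns\varepsilon}$;
\item the near-orthogonality of the $P_\nu$;
\item the main $L^2$ term, estimated via the $TT^*$ kernel together with the sizes of the caps, giving a gain of $2^{-s(\varepsilon'-\iota)}$.
\end{enumerate}
Choosing $\iota$ small relative to $\varepsilon$ then yields the decay $2^{-\delta s}$.

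The step I expect to be hardest is the overlap issue in the main term. The cutoffs $\chi_{J^m}$ for nested cubes from different layers interact, and the counting must not lose $u_0\approx 2^{ds}$. The plan is to use the fact that only cubes in the layer $H^{i-1}\setminus H^i$ appear, which bounds the overlap multiplicity by $u_0$. I would then trade this multiplicity against the $2^{-ds}$ gain from Lemma~\ref{sum|J|}, comparing $|J|$ with $2^{ds}$ times the total measure of the $Q$'s it contains, so that the two factors cancel exactly.
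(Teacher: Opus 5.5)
\textbf{The microlocal step is set up the wrong way round, so step (2) fails as stated.} Since $\Omega$ has no angular regularity, the only smoothness of $x\mapsto \Omega(x-y)K^s_j(x,y)\widetilde{\chi}_{J^m}(x)$ for fixed $y\in Q$ is radial. Writing $x=y+r\theta$, the profile $r\mapsto K^s_j(y+r\theta,y)\widetilde{\chi}_{J^m}(y+r\theta)r^{d-1}$ satisfies \eqref{qwer}. Nothing can be gained by integrating by parts transversally to $\theta_\nu$. Each radial integration by parts gains $(2^{j-s\kappa}|\langle \xi,\theta\rangle|)^{-1}$, up to powers of $s^{\gamma^{-1}}$. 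So decay is available where $\xi/|\xi|$ is far from the hyperplane $\theta_\nu^{\perp}$, not away from $\theta_\nu$.

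This is why $G^s_v$ has symbol $\Phi(2^{s\mu}\langle e^s_v,\xi/|\xi|\rangle)$, which localizes to a $2^{-s\mu}$-neighbourhood of $(e^s_v)^{\perp}$. These regions overlap $2^{s\mu(d-2)}$ times, and in $I_3$ the tube bound $|E^{s,v}_j|\lesssim 2^{jd-s\mu(d-1)}$ beats that overlap. The complement $I-G^s_v$ is handled by radial integration by parts for $|\xi|\gtrsim 2^{-j+s\sigma}$ and by the cancellation of $b_Q$ below that. With a frequency cone around $\theta_\nu$, your error term $\beta_\nu-P_\nu\beta_\nu$ contains exactly the region near $\theta_\nu^{\perp}$ where the Fourier transform of the piece is concentrated.

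The same device, polar coordinates centred at the frozen $y$, is how the paper copes with the non-convolution kernel. The Fourier expansion of $K^s_j$ in $x$ is therefore unnecessary. You also never say how the sharp cutoff $\chi_{J^m}$ enters the integration by parts. The paper mollifies it at scale $2^{j-s\kappa}$ and pays $2^{-s\kappa}$ for the boundary layer in $I_1$.

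\textbf{You have no mechanism for turning $L^1$ gains into the required $L^2$ gains.} The lemma asks for an $L^2$ bound with a gain for every piece; this is forced by the Rademacher--Menshov linearization. But the estimates that produce the gain for the off-cone pieces are $L^1$ estimates, because the kernels of $(I-G^s_v)\Lambda_k$ are not compactly supported. Your $L^1\times L^\infty$ pairing does not help here, since no pointwise bound uniform in the sum over $j$ is available. The paper pairs the $L^1$ estimates for $I_1$, $I_2$, $I_4$ with $L^3$ estimates that carry no gain. These come from expanding the cube over ordered scales $j_1\ge j_2\ge j_3$ and using the disjointness of the $Q$'s together with $\|b_Q\|_1\lesssim\lambda C_\Omega^{-1}|Q|$. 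It then interpolates via $\|F\|_2\le\|F\|_1^{1/4}\|F\|_3^{3/4}$.

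For $I_4$ the $L^3$ bound costs $2^{s\mu(\lfloor d/2\rfloor+1)+\frac23 s\mu(d-1)}$, coming from the Mikhlin constant of $I-G^s_v$ and the number of caps. That is why the caps have diameter $2^{-s\mu}$ with $\iota\ll\mu\ll\kappa\ll\sigma\ll1$. With your caps of diameter $2^{-s(1-\varepsilon)}$, this loss would far exceed any available gain.

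\textbf{The overlap you single out as the hardest step is not an obstacle.} After reindexing by $Q\in\mathfrak{Q}_{j-s}$, each pair $(j,Q)$ occurs exactly once. The signs and the cutoffs, with $|\chi_{J_Q^m}|,|\widetilde{\chi}_{J_Q^m}|\le 1$, are simply discarded. The layer structure enters only through $\sum_j\sum_{Q\in\mathfrak{Q}_{j-s}}\|b_Q\|_1\lesssim 3^{-i}\|f\|_1$.

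Your proposed remedy, trading $u_0$ against a ``$2^{-ds}$ gain'' from Lemma~\ref{sum|J|}, has no basis. That lemma is a $2^{ds}$ loss, compensated only in Lemma~\ref{|F^i_{s,m}|} by the threshold $\lceil C_0 2^{ds}\rceil$. If you genuinely incurred the multiplicity $u_0\approx 2^{ds}$ in $L^2$, nothing in the argument would recover it.
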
 
To see this, we replace the constants $N$ and $B$ in Lemma~\ref{Rademacher-Menshov} by $u_0$ and $ 3^{-\frac{i}{2}} 2^{-\delta s} \cdot s^{ \frac{N_0}{2} \gamma^{-1}} \big(  \lambda C_{\Omega} \|f\|_{L^1(\R^d)} \big)^{1/2}$, respectively.
Since $\log(2+u_0)\lesssim_d s$, assuming Lemma \ref{epsilon_u beta_u} yields exactly (\ref{sum beta_u}).
Hence, we have reduced the proof of Lemma~\ref{lemma (T_{J_2}^s b_{j-s}) chi_{J^m}} to establishing Lemma~\ref{epsilon_u beta_u}, completing the linearization of the maximal operator.

\medskip
\subsection{Proof of Lemma \ref{epsilon_u beta_u}: Smoothing and microlocal decomposition}

By the definition of $\beta_u$ and the partition $\bigcup\limits_{u=1}^{u_0} \mL_u = H^{i-1}_{s,m} \backslash H^i_{s,m} $, we have
    \begin{align*}
    \sum_{1\leq u\leq u_0} \epsilon_u \beta_u 
    = \sum_{1\leq u\leq u_0}  \sum_{j\in\Z}  \sum_{{\begin{subarray}{c} J\in\mD_{j} \\ J^m\in \mL_u \end{subarray}}} \epsilon_u \big( T_{J,2}^s b_{j-s} \big)\chi_{J^m}
    = \sum_{j\in\Z}  \sum_{{\begin{subarray}{c} J\in\mD_{j} \\ J^m\in H^{i-1}_{s,m}\backslash H^i_{s,m} \end{subarray}}} \epsilon_u \big( T_{J,2}^s b_{j-s} \big)\chi_{J^m}.
    \end{align*} 
Notice that $b_{j-s} = \sum\limits_{Q\in\mQ_{j-s}} b_Q$.
Since $J^m\in H_{s,m}^{i-1}\setminus H_{s,m}^i$ implies $J\in\mD_j$ and $b_{j-s} \chi_J \neq 0$, only those \(Q\in\mQ_{j-s}\) with \(Q\subset J\) actually contribute to the sum.
We therefore rewrite the above equality as
    \begin{align*}
    \sum_{j\in\Z}  \sum_{{\begin{subarray}{c} J\in\mD_{j} \\ J^m\in H^{i-1}_{s,m}\backslash H^i_{s,m} \end{subarray}}}  \sum_{{\begin{subarray}{c} Q\in\mQ_{j-s} \\ Q\subseteq J \end{subarray}}}   \epsilon_u \big( T_{J,2}^s b_{Q} \big)\chi_{J^m} 
    &= \sum_{j\in\Z}   \sum_{Q\in\mQ_{j-s}}   \sum_{{\begin{subarray}{c} J\in\mD_{j} \\ J^m\in H^{i-1}_{s,m}\backslash H^i_{s,m} \\ Q\subseteq J \end{subarray}}}  \epsilon_u \big( T_{J,2}^s b_{Q} \big)\chi_{J^m} \\
    &= \sum_{j\in\Z}   \sum_{Q\in\mathfrak{Q}_{j-s}} \epsilon_{J_Q}  \big( T_{j} b_{Q} \big)\chi_{J_Q^m},
    \end{align*}
where the set $\mathfrak{Q}_{j-s}$ is defined as
    \begin{align*}
    &\qquad \{Q\in\mQ_{j-s}:  \text{there is exactly one } J\in\mD_j \ \text{such that}\ J^m\in H^{i-1}_{s,m} \backslash H^i_{s,m},\ Q\subset J\},
    \end{align*}
and we denote this unique $J$ associated with $Q$ by $J_Q$.

Therefore, it remains to prove the following lemma: 
\begin{lemma} \label{epsilon_{J_Q} (T_{j}^s b_{Q}) chi_{J^m}}
There exists a positive constant $\delta$ and a positive integer $N_0$ such that
	\begin{align*}
    \Big\| \sum_{j\in\Z} \sum_{Q\in\mathfrak{Q}_{j-s}} \epsilon_{J_Q}  \big( T_{j}^s b_{Q} \big)\chi_{J_Q^m}  \Big\|_{L^2(\R^d)}
    \lesssim 3^{-\frac{i}{2}} 2^{-\delta s} s^{ \frac{N_0}{2} \gamma^{-1}}  \big( \lambda C_{\Omega} \|f\|_{L^1(\R^d)} \big)^{1/2}.	
    \end{align*}
\end{lemma}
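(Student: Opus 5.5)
The plan is to prove the estimate by an $L^2$ almost-orthogonality argument in the spirit of Seeger \cite{Seeger1996Singular} and Lai \cite{Lai2025weak}. The decaying factor $3^{-i}$ will come from the geometry of Section~\ref{Section: proof of lemma T_{J,2}^s}, and the factor $2^{-\delta s}$ from oscillatory cancellation. Here $T_j^s$ in the statement is understood with $\Omega_2$, i.e.\ it is $T_{J_Q,2}^s$.

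\emph{Step 1: the $L^1$ budget.} Every $Q\in\mathfrak Q_{j-s}$ satisfies $Q\subset J_Q$ with $J_Q^m\in H^{i-1}_{s,m}$. Hence $J_Q\subseteq 3J_Q^m\subseteq 3F^{i-1}_{s,m}$, and the cubes $Q$ are disjoint by (cz-iv). By (cz-iii), Lemma~\ref{sum|J|} with $A=3F^{i-1}_{s,m}$, and Lemma~\ref{|F^i_{s,m}|}, I would first show
\begin{align*}
\sum_{j}\sum_{Q\in\mathfrak Q_{j-s}}\|b_Q\|_{L^1}\lesssim \frac{\lambda}{C_\Omega}\,2^{-ds}\sum_{\substack{J\subseteq 3F^{i-1}_{s,m}\\ b_{j-s}\chi_J\neq0}}|J|\lesssim 3^{-i}\|f\|_{L^1}
\end{align*}
(for $i=1$ one uses the global form of Lemma~\ref{sum|J|} instead). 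Every subsequent bound will be of the form $2^{-2\delta s}s^{N_0\gamma^{-1}}\lambda C_\Omega\times(\text{this sum})$, which yields the lemma.

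\emph{Step 2: angular and microlocal decomposition.} Fix $\varepsilon\ll\iota$ and cover $\sS^{d-1}$ by caps $\{\theta_\nu\}$ of diameter $2^{-\varepsilon s}$. Split $\Omega_2=\sum_\nu\Omega_2\chi_\nu$, and on the frequency side insert a cone multiplier $\Gamma_\nu$ localizing $\xi$ to directions nearly orthogonal to $\theta_\nu$, writing $T_j^s b_Q=\sum_\nu\Gamma_\nu(\cdots)+\sum_\nu(I-\Gamma_\nu)(\cdots)$. Because $K_j^s$ is not a convolution kernel, I would freeze the $x$-variable: expand $K_j^s(x,y)$ around $x=c(J_Q)$. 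The map $x\mapsto K_j^s(x,y)$ is smooth at scale $2^{j-s_\gamma}$, so one can either Fourier-expand in $x$ on $3J_Q$ or Taylor-expand. Either way each derivative costs $2^{s_\gamma}\approx s^{2/\gamma}$, which after $N_0$ derivatives produces the factor $s^{N_0\gamma^{-1}}$. The cutoff $\chi_{J_Q^m}$ is handled the same way, since it lives at the coarser scale $2^j$. This splits the square of the norm into four pieces, the estimates \eqref{estimate for I_1}--\eqref{estimate for I_4}:
\begin{itemize}
\item $I_1$ is the Taylor/Fourier error of the freezing, bounded via kernel regularity \eqref{regularity condition}.
\item $I_2$ is the $(I-\Gamma_\nu)$ part, controlled in $L^1$ by integration by parts along $\theta_\nu$. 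This uses the mean zero of $b_Q$ and the smoothing at scale $2^{j-s_\gamma}\gg\ell(Q)=2^{j-s}$, and gives a gain $2^{-cs}$ against $\|\Omega\|_1$.
\item $I_3$ and $I_4$ are the diagonal and off-diagonal parts of $\|\sum\Gamma_\nu\cdots\|_2^2$.
\end{itemize}

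\emph{Step 3: the $L^2$ core (main obstacle).} For the $\Gamma_\nu$ part, I would use the bounded overlap of the $\Gamma_\nu$ to pass to $\sum_\nu$. I then expand $\|\sum_{j,Q}\epsilon_{J_Q}\cdots\|_2^2$ as a double sum over pairs $(Q,Q')$, using $\|\Omega_2\|_\infty\le 2^{\iota s}\|\Omega\|_1$. For each pair, the kernel of $\Gamma_\nu^*\Gamma_\nu$ composed with the two frozen operators is bounded by $2^{-jd}$ times the measure of a $2^{-\varepsilon s}$-tube around direction $\theta_\nu$. Summing over $Q'$ and using $|b_{Q'}|$ with $\langle |b|\rangle\lesssim\lambda/C_\Omega$ then gives a gain $2^{-\varepsilon s(d-1)}$, which beats the loss $2^{2\iota s}$ once $\iota\ll\varepsilon$.

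The delicate point is the interaction across scales $j\neq j'$ and across different layers. Here the signs $\epsilon_{J_Q}$ and the cutoffs $\chi_{J_Q^m}$ destroy pure convolution structure. I would try to control the off-diagonal $j\ne j'$ terms through the nested structure: along a chain at most $u_0\approx C_02^{ds}$ cubes of $H^{i-1}_{s,m}\setminus H^i_{s,m}$ contain a point, so one may use Cauchy--Schwarz over the chain. If that loses too much, the fallback is to exploit frequency decay $|\widehat{\eta_{j-s_\gamma}}|$ between separated scales together with the Hörmander--Mihlin theorem for the $\Gamma_\nu$. Collecting all terms gives $\lesssim 2^{-2\delta s}s^{N_0\gamma^{-1}}\lambda C_\Omega\,3^{-i}\|f\|_1$, and taking square roots yields the lemma.
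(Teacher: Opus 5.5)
There is a genuine gap. The lemma is an $L^2$ statement, because it feeds the Rademacher--Menshov linearization. Your treatment of the non-orthogonal piece $(I-\Gamma_\nu)$ only yields an $L^1$ bound of the form $2^{-cs}\|\Omega\|_1\sum_{j,Q}\|b_Q\|_{L^1}$, and nothing in the proposal turns $L^1$ smallness into $L^2$ smallness. In Seeger's weak type argument such an $L^1$ bound is enough, since it goes straight into Chebyshev; here it is not.

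The paper's device is to pair every $L^1$ estimate with an $L^3$ estimate and interpolate: $\|F\|_{L^2}\le\|F\|_{L^1}^{1/4}\|F\|_{L^3}^{3/4}$. The $L^3$ bound comes from positivity alone. One expands the cube over $j_1\ge j_2\ge j_3$ and uses that the Calder\'on--Zygmund cubes of all scales are disjoint with $\|b_Q\|_{L^1}\lesssim(\lambda/C_\Omega)|Q|$. This gives $\|\sum_{j,Q}|T^s_jb_Q|\|_{L^3}^3\lesssim\|\Omega\|_\infty^3(\lambda/C_\Omega)^2\sum_{j,Q}\|b_Q\|_{L^1}$. For the $(I-G^s_v)$ piece one also needs H\"ormander--Mihlin on $L^3$ at a cost $2^{s\mu(\lfloor d/2\rfloor+1)}$, which is why the hierarchy $\iota\ll\mu\ll\kappa\ll\sigma\ll1$ is needed.

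The same issue affects the cutoff. $\chi_{J_Q^m}$ is discontinuous, so it cannot be Taylor or Fourier expanded ``the same way'' as the kernel. The paper mollifies it at scale $2^{j-s\kappa}$. Both resulting pieces are small only in $L^1$: the boundary-layer error (gain $2^{-s\kappa}$) and the low-frequency part $P_{j-s\kappa}(\cdots)$ (gain $2^{-s(1-\kappa)}$ from the mean zero of $b_Q$). So they too must go through $L^1$--$L^3$ interpolation.

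Two further steps would fail as written.

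\emph{Freezing.} Freezing $x$ at $c(J_Q)$ moves $x$ by about $2^j\approx|x-y|$. At that displacement \eqref{regularity condition} gives nothing, since it requires $|x_1-x_2|\le|x_1-y|/2$, and a Taylor expansion across $2^{s_\gamma}$ smoothness lengths leaves no small remainder. Freezing is in any case unnecessary. The integration by parts is only along rays, in $r\mapsto K_j^s(y+r\theta,y)\widetilde{\chi}_{J_Q^m}(y+r\theta)r^{d-1}$, with each $\partial_r$ costing $2^{s_\gamma-j}$ or $2^{s\kappa-j}$. That, not freezing, is where $s^{2N_0\gamma^{-1}}$ comes from.

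\emph{Cross-scale terms.} The interaction across scales in the $\Gamma_\nu$ part is not the delicate point. After Plancherel and $\sup_\xi\sum_v\Phi^2(2^{s\mu}\langle e^s_v,\xi/|\xi|\rangle)\lesssim2^{s\mu(d-2)}$ (the overlap is not bounded when $d\ge3$), one drops the multiplier. One then uses the pointwise bound $\|\Omega\|_\infty2^{-jd}\chi_{E^{s,v}_j}*|b_Q|$, so the signs $\epsilon_{J_Q}$ and the cutoffs disappear. The terms $i\le j$ are summed by positivity: cubes of all scales $\le j-s$ meeting a $2^{j-s\mu}$-tube of length $2^j$ have total measure $\lesssim2^{jd-s\mu(d-1)}$. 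Your primary suggestion, Cauchy--Schwarz along chains of length $u_0\approx C_02^{ds}$, would lose a factor $u_0$ in the squared norm and wipe out the gain $2^{-\delta s}$.
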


To prove this lemma, we first construct a smooth approximation of the characteristic function $\chi_{J^m}$ and then introduce a microlocal decomposition of the kernel.

Let $\omega\in C_c^{\infty}(\R^d)$ be a nonnegative, radial function satisfying $\supp\omega\subseteq\{x\in\R^d: |x|\leq 2^{-5}\}$ and $\int \omega =1$. 
Set $\omega_j(x)=2^{-jd}\omega(2^{-j}x)$.
Define the operator $P_j$ by $P_j h = \omega_j * h$.
Now, let 
    \begin{align*}
    \widetilde{\chi}_{J^m} = P_{j-s\kappa}(\chi_{J^m}),
    \end{align*}
where $\kappa\in (0,1)$ is a small constant to be chosen later.	
A straightforward calculation yields, for any multi-index \(\alpha\),
    \begin{align} \label{partial_x^alpha widetilde{chi}_{J^m} (x)}
    |\partial_x^{\alpha} \widetilde{\chi}_{J^m} (x) |
    \lesssim 2^{-(j-s\kappa)|\alpha|}.
    \end{align}

Next, we employ Seeger's modified microlocal decomposition method (see \cite{Seeger1996Singular}).
For a fixed $s\geq 200$, construct a collection $\Theta_s = \{e_v^s\}_v$ of unit vectors satisfying
    \begin{enumerate}[(i)]
    \item $|e_v^s - e_{v'}^s| \geq 2^{-s\mu-4}$ whenever $v\neq v'$,
    \item For every $\theta\in \sS^{d-1}$, there exists an unit vector $e^s_v\in\Theta_s$ such that $|e^s_v-\theta| \leq 2^{-s\mu-4}$,
    \end{enumerate}
where the constants $\mu$ and $\kappa$ satisfy $0<\mu<\kappa<1$.

Then, we can find a family of sets $\{E^s_v\}_v$ such that
    \begin{enumerate}[(i)]
    	\item $e^s_v \in E^s_v$.
    	\item $E^s_v \bigcap E^s_{v'} = \varnothing$ when $v\neq v'$.
    	\item $\bigcup\limits_v E^s_v = \sS^{d-1}$.
    	\item $2^{-s\mu-5} \leq \text{diam}(E^s_v) \leq 2^{-s\mu-2}$. 
    \end{enumerate}
Moreover, we have the following cardinality estimate:
    \begin{align} \label{Card Theta}
    \#\Theta_s \lesssim 2^{s\mu (d-1)}.
    \end{align}

Define 
    \begin{align*}
    K_j^{s,v} (x,y) = K_j^s(x,y) \chi_{E^s_v} \Big( \frac{x-y}{|x-y|} \Big),
    \end{align*} 
and the corresponding operator
    \begin{align*}
    T_j^{s,v} h(x) = \int_{\R^d} \Omega(x-y) K_j^{s,v}(x,y) h(y) \dif y.
    \end{align*}
Thus, the operator $T_j$ can be decomposed as $T_j = \sum\limits_v T_j^{s,v}$.

Let $\Phi$ be a smooth nonnegative radial function with $0\le\Phi\le1$, $\Phi(x)=1$ for $|x|\le2$ and $\Phi(x)=0$ for $|x|>4$.  
Define the Fourier multiplier operator $G_v^s$ by 
   \begin{align*}
    \mF(G^s_v h) (\xi) = \Phi ( 2^{s\mu} \langle e^s_v,\ \xi/|\xi| \rangle) \mF(h) (\xi).
    \end{align*}
One can verify that 
    \begin{align} \label{Phi(2^s delta <e^s_v, xi/|xi|>)}
    \sup_{\xi\neq 0} \sum_{v\in\Theta_s} \Phi^2 (2^{s\mu} \langle e^s_v,\ \xi/|\xi| \rangle ) 
    \lesssim 2^{s\mu (d-2)}.
    \end{align}

Using the smoothed characteristic function $\widetilde{\chi}_{J^m}$, the convolution operator $P_{j-s\kappa}$, the Fourier multiplier $G_v^s$, and the decomposition $T_j^s = \sum\limits_v T_j^{s,v}$, we split $\epsilon_{J_Q}(T_j^s b_Q)\chi_{J_Q^m}$ into four terms:
    \[
    \begin{aligned}
    I_1 &= \epsilon_{J_Q} (T_j^s b_Q)(\chi_{J_Q^m} - \widetilde{\chi}_{J_Q^m}),\\
    I_2 &= P_{j-s\kappa}\bigl( \epsilon_{J_Q} (T_j^s b_Q)\widetilde{\chi}_{J_Q^m} \bigr),\\
    I_3 &= \sum_v (I - P_{j-s\kappa}) G_v^s \bigl( \epsilon_{J_Q} (T_j^{s,v} b_Q)\widetilde{\chi}_{J_Q^m} \bigr),\\
    I_4 &= \sum_v (I - P_{j-s\kappa}) (I - G_v^s) \bigl( \epsilon_{J_Q} (T_j^{s,v} b_Q)\widetilde{\chi}_{J_Q^m} \bigr).
    \end{aligned}
    \]
Consequently, to prove Lemma~\ref{epsilon_{J_Q} (T_{j}^s b_{Q}) chi_{J^m}}, it suffices to establish the following four estimates for suitable positive constants $\delta_1$, $\delta_2$, $\delta_3$, $\delta_4$, and a positive integer $N_0$:
    \begin{align}
    \Bigl\| \sum_{j\in\Z}\sum_{Q\in\mathfrak{Q}_{j-s}} I_1 \Bigr\|_{L^2(\R^d)} &\lesssim 3^{-\frac{i}{2}} 2^{-\delta_1 s} \bigl( \lambda C_\Omega \|f\|_{L^1(\R^d)} \bigr)^{1/2}, \label{estimate for I_1}\\
    \Bigl\| \sum_{j\in\Z}\sum_{Q\in\mathfrak{Q}_{j-s}} I_2 \Bigr\|_{L^2(\R^d)} &\lesssim 3^{-\frac{i}{2}} 2^{-\delta_2 s} s^{\frac{1}{2} \gamma^{-1}} \bigl( \lambda C_\Omega \|f\|_{L^1(\R^d)} \bigr)^{1/2}, \label{estimate for I_2}\\
    \Bigl\| \sum_{j\in\Z}\sum_{Q\in\mathfrak{Q}_{j-s}} I_3 \Bigr\|_{L^2(\R^d)} &\lesssim 3^{-\frac{i}{2}} 2^{-\delta_3 s} \bigl( \lambda C_\Omega \|f\|_{L^1(\R^d)} \bigr)^{1/2}, \label{estimate for I_3}\\
    \Bigl\| \sum_{j\in\Z}\sum_{Q\in\mathfrak{Q}_{j-s}} I_4 \Bigr\|_{L^2(\R^d)} &\lesssim 3^{-\frac{i}{2}} 2^{-\delta_4 s} s^{\frac{N_0}{2}\gamma^{-1}} \bigl( \lambda C_\Omega \|f\|_{L^1(\R^d)} \bigr)^{1/2}. \label{estimate for I_4}
    \end{align}

The proofs of estimates (\ref{estimate for I_1})-(\ref{estimate for I_4}) are technically involved and rely on a careful analysis of oscillatory integrals. 
To keep the exposition focused, we postpone them to the next section.

At the end of this section, we provide an $L^1$ estimate for $b_Q$, which will be needed in the subsequent proofs.
From (cz-iii) and the definition of $\mathfrak{Q}_{j-s}$,
    \begin{align*}
    \sum_{j\in\Z} \sum_{Q\in\mathfrak{Q}_{j-s}} \| b_Q \|_{L^1(\R^d)}
    &\lesssim \lambda C_{\Omega}^{-1} \sum_{j\in\Z}  \sum_{Q\in\mathfrak{Q}_{j-s}} |Q| \\
    &\leq \lambda C_{\Omega}^{-1}\sum_{j\in\Z}  \sum_{Q\in\mathfrak{Q}_{j-s}} \min\Big\{ \Big| \bigcup_{j\in\Z} \bigcup_{Q\in\mQ_{j-s}} Q \Big|, \ \Big| \bigcup_{j\in\Z} \bigcup_{{\begin{subarray}{c} J\in\mD_{j} \\ J^m\in H^{i-1}_{s,m}\backslash H^i_{s,m} \end{subarray}}} J \Big| \Big\} \\
    &\leq \min\Big\{ \lambda^{-1}C_{\Omega} \|f\|_{L^1(\R^d)},\  \Big| \bigcup_{j\in\Z} \bigcup_{{\begin{subarray}{c} J\in\mD_{j} \\ J^m\in H^{i-1}_{s,m} \end{subarray}}}J \Big| \Big\}.
    \end{align*}    
Note that $J \subset 3J^m$. 
Using Lemma \ref{|F^i_{s,m}|},  
    \begin{align*}
    \Big| \bigcup_{j\in\Z} \bigcup_{{\begin{subarray}{c} J\in\mD_{j} \\ J^m\in H^{i-1}_{s,m} \end{subarray}}} J \Big|
    \leq 3^d \Big| \bigcup_{j\in\Z} \bigcup_{{\begin{subarray}{c} J\in\mD_{j} \\ J^m\in H^{i-1}_{s,m} \end{subarray}}} J^m \Big| 
    \lesssim |F^{i-1}_{s,m}|
    \lesssim 3^{-i+1} \lambda^{-1} C_{\Omega} \|f\|_{L^1(\R^d)}.
    \end{align*}
Finally, since $i\geq 1$, we obtain
    \begin{align} \label{sum|b_Q|_1}
    \sum_{j\in\Z} \sum_{Q\in\mathfrak{Q}_{j-s}} \| b_Q \|_{L^1(\R^d)} 
    \lesssim \min\big\{ \|f\|_{L^1(\R^d)},\ 3^{-i} \|f\|_{L^1(\R^d)} \min\big\}
    \leq 3^{-i } \|f\|_{L^1(\R^d)},
    \end{align}
which gives the bound for $b_Q$.

\medskip
\section{Proof of estimates (\ref{estimate for I_1})-(\ref{estimate for I_4})} \label{Section: Proof of estimates I_1-I_4}
In this section, we prove the four estimates \eqref{estimate for I_1}-\eqref{estimate for I_4}, which were used in the proof of Lemma~\ref{epsilon_{J_Q} (T_{j}^s b_{Q}) chi_{J^m}}. 
For \(I_1\) and \(I_2\), we establish \(L^1\) and \(L^3\) bounds and then interpolate between them. 
For \(I_3\), we use the Plancherel theorem, Cauchy-Schwarz inequality, and the orthogonality estimate \eqref{Phi(2^s delta <e^s_v, xi/|xi|>)} to reduce the proof to an \(L^2\) estimate, which is verified by a geometric argument involving the sets \(E_j^{s,v}\). 
For \(I_4\), the most complicated case, we need both an \(L^1\) estimate (derived via Littlewood-Paley decomposition and oscillatory integral estimates) and an \(L^3\) estimate; interpolation then yields the desired bound. 
The required technical lemmas are stated and proved at the end of this section.

\medskip
\subsection{Proof of (\ref{estimate for I_1})} 

We need the $L^1$ and $L^3$ estimates for $I_1$:
    \begin{align} \label{L_1 estimate for I_1}
    \Big\| \sum_{j\in\Z} \sum_{Q\in\mathfrak{Q}_{j-s}} I_1  \Big\|_{L^1(\R^d)}
    \lesssim 2^{-s\kappa} 3^{-i} \|\Omega\|_{\infty} \|f\|_{L^1(\R^d)}.
    \end{align}
    \begin{align} \label{L_3 estimate for I_1}
    \Big\| \sum_{j\in\Z} \sum_{Q\in\mathfrak{Q}_{j-s}} I_1  \Big\|_{L^3(\R^d)}
    \lesssim \|\Omega\|_{\infty} \big( \lambda^2 C_{\Omega}^{-2}  3^{-i} \|f\|_{L^1(\R^d)} \big)^{\frac{1}{3}}.
    \end{align}
Making an interpolation between (\ref{L_1 estimate for I_1}) and (\ref{L_3 estimate for I_1}), and using the fact that $\|\Omega\|_{\infty} \leq 2^{s\iota} \|\Omega\|_{1} \leq 2^{s\iota} C_{\Omega}$, we see
   \begin{align*}
   \text{LHS of}\ (\ref{estimate for I_1}) 
   &\lesssim \Big( 2^{-s\kappa} 3^{-i} \|\Omega\|_{\infty} \|f\|_{L^1(\R^d)} \Big)^{\frac{1}{4}}
   \cdot \Big( \|\Omega\|_{\infty} \big( \lambda^2 C_{\Omega}^{-2}  3^{-i} \|f\|_{L^1(\R^d)} \big)^{\frac{1}{3}} \Big)^{\frac{3}{4}}  \\
   &\lesssim 2^{-\frac{1}{4} s \kappa + s\iota} 3^{-\frac{i}{2}} \big( \lambda C_{\Omega} \|f\|_{L^1(\R^d)} \big)^{\frac{1}{2}}.
   \end{align*}
Choosing $\iota$ sufficiently small so that $\frac14\kappa > \iota$, and setting 
    \[
    \delta_1 := \frac14\kappa - \iota,
    \]
we obtain \eqref{estimate for I_1}.

\medskip
\noindent\textbf{Proof of the \(L^1\) estimate \eqref{L_1 estimate for I_1}.}
Fix a cube $Q\in\mathfrak{Q}_{j-s}$. 
For a $x\in Q$, we see
    \begin{align*}
    |(\chi_{J_Q^m} - \widetilde{\chi}_{J_Q^m}) (x)| 
    &\leq \int_{|u|\lesssim 2^{j-s\kappa}} |\chi_{J_Q^m}(x) - \chi_{J_Q^m}(x-u)|	 \omega_{j-s\kappa}(u) \dif u \\
    &\lesssim \chi_{ \{\dist(x, \partial J_Q^m)\lesssim 2^{j-s\kappa}\} }(x).
    \end{align*}
Combining this with \eqref{estimate for K_j^s(x,y)} gives
    \begin{align*}
     \big\| \big( T_{j}^s b_{Q} \big) (\chi_{J_Q^m} - \widetilde{\chi}_{J_Q^m}) \big\|_{L^1(\R^d)}
    &\lesssim 2^{-jd} \|\Omega\|_{\infty} \int_{\R^d} |b_Q(y)| \Big(\int_{\dist(x, \partial J_Q^m)\lesssim 2^{j-s\kappa}} \dif x \Big) \dif y\\
    &\lesssim 2^{-s\kappa} \|\Omega\|_{\infty} \|b_Q\|_{L^1(\R^d)}.
    \end{align*}
Summing over $j$ and $Q$ and using \eqref{sum|b_Q|_1} yields (\ref{L_1 estimate for I_1}).

\medskip
\noindent\textbf{Proof of the \(L^3\) estimate \eqref{L_3 estimate for I_1}.}
The $L^3$ estimate (\ref{L_3 estimate for I_1}) derives directly from the following lemma:
\begin{lemma} \label{sum sum |T^s_j b_Q|}
Using the notation established above, we have
    \begin{align*} 
    \Big\| \sum_{j\in\Z} \sum_{Q\in\mathfrak{Q}_{j-s}} |T_j^s b_Q|  \Big\|_{L^3(\R^d)}
    \lesssim \|\Omega\|_{\infty} \Big( \lambda^2 C_{\Omega}^{-2}  3^{-i} \|f\|_{L^1(\R^d)} \Big)^{1/3}.
    \end{align*}	
\end{lemma}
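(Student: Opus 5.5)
The plan is to interpolate between an $L^1$ bound and an $L^\infty$ bound for $F:=\sum_{j}\sum_{Q\in\mathfrak{Q}_{j-s}}|T_j^s b_Q|$, using $\|F\|_{3}^3\le \|F\|_{\infty}^2\|F\|_1$.

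\textbf{Step 1: the $L^1$ bound.} By \eqref{estimate for K_j^s(x,y)}, for each $Q\in\mathfrak{Q}_{j-s}$ we have
\[
\|T_j^s b_Q\|_{L^1}\lesssim \|\Omega\|_\infty 2^{-jd}\,|B_{2^{j}}|\,\|b_Q\|_{L^1}\lesssim \|\Omega\|_\infty\|b_Q\|_{L^1}.
\]
Summing and invoking \eqref{sum|b_Q|_1} gives
\[
\|F\|_{L^1}\lesssim \|\Omega\|_\infty 3^{-i}\|f\|_{L^1}.
\]

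\textbf{Step 2: the $L^\infty$ bound.} Fix $x$. By \eqref{estimate for K_j^s(x,y)},
\[
|T_j^s b_Q(x)|\lesssim \|\Omega\|_\infty 2^{-jd}\int_{Q\cap\{2^{j-5}\le|x-y|\le 2^{j-1}\}}|b_Q|.
\]
For fixed $j$, only cubes $Q\in\mQ_{j-s}$ inside the ball $B(x,2^{j})$ contribute. These are disjoint, and by (cz-iii) we have $\|b_Q\|_1\lesssim(\lambda/C_\Omega)|Q|$, so
\[
\sum_{Q\in\mathfrak{Q}_{j-s}}|T_j^s b_Q(x)|\lesssim \|\Omega\|_\infty 2^{-jd}(\lambda/C_\Omega)\sum_{Q\subset B(x,2^j),\,\ell(Q)=2^{j-s}}|Q|\lesssim \|\Omega\|_\infty\lambda/C_\Omega .
\]
Summing over $j$ is the main obstacle, since a naive sum diverges. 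The remedy is to use the disjointness of \emph{all} cubes in $\mQ$ across scales together with the annular support. The cubes contributing at scale $j$ lie in the annulus $A_j(x)=\{2^{j-6}\le |x-y|\le 2^j\}$, and these annuli have bounded overlap in $j$. That alone is not enough, because each annulus has measure $\approx 2^{jd}$ and the weights $2^{-jd}$ then sum to a quantity $\lesssim$ the number of scales.

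The fix is to use $x\notin E^*$, or more generally to bound $\sum_j 2^{-jd}|E\cap A_j(x)|$. The quantity $\sum_j 2^{-jd}\chi_{A_j(x)}(y)\approx|x-y|^{-d}$ is not integrable, so a truly uniform pointwise bound seems unavailable. I would therefore not seek an $L^\infty$ bound for $F$ directly, but bound $\|F\|_3$ via duality or Young instead.

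\textbf{Step 2, revised.} Instead use
\[
F\lesssim \|\Omega\|_\infty \sum_j (|b^{(j)}|*\psi_j),\qquad \psi_j=2^{-jd}\chi_{|\cdot|\le 2^j},\quad b^{(j)}=\sum_{Q\in\mathfrak{Q}_{j-s}}b_Q .
\]
Then replace each $|b_Q|$ by its average $\langle|b_Q|\rangle_Q\chi_Q\lesssim(\lambda/C_\Omega)\chi_Q$. This replacement is legitimate up to constants because $\psi_j$ is essentially constant at scale $2^{j-s}\ll 2^j$ (after enlarging $\psi_j$ slightly). We thus dominate $F$ by
\[
\|\Omega\|_\infty(\lambda/C_\Omega)\sum_j \chi_{E_j}*\psi_j,\qquad E_j=\bigcup_{Q\in\mathfrak{Q}_{j-s}}Q,
\]
where the $E_j$ are pairwise disjoint with total measure $\lesssim 3^{-i}\lambda^{-1}C_\Omega\|f\|_1$ (as in \eqref{sum|b_Q|_1}).

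The key remaining inequality is
\[
\Big\|\sum_j\chi_{E_j}*\psi_j\Big\|_{L^3}^3\lesssim\Big|\bigcup_j E_j\Big| .
\]
I would prove this by expanding the cube into $\sum_{j_1\le j_2\le j_3}$ and using $\|\chi_{E_{j}}*\psi_{j}\|_\infty\lesssim1$. The main difficulty is again the log-divergence across scales. Here I would exploit the structure from Section~\ref{Section: proof of lemma T_{J,2}^s}: the cubes $J_Q^m$ lie in $H^{i-1}_{s,m}\setminus H^i_{s,m}$, so any point lies in at most $u_0\approx 2^{ds}$ of the relevant $3J$'s. This yields $\sum_j\chi_{E_j}*\psi_j\lesssim 2^{-ds}u_0\lesssim1$ pointwise, since each contributing $J$ gives $\chi_{E_j}*\psi_j\lesssim 2^{-ds}\cdot\#\{Q\subset J\}\,$-average, which is $\le 1$ and supported in $3J$. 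Combining the pointwise bound $\|F\|_\infty\lesssim\|\Omega\|_\infty\lambda C_\Omega^{-1}$ with Step 1 via $\|F\|_3^3\le\|F\|_\infty^2\|F\|_1$ gives exactly the claimed bound. The main obstacle is this bounded-depth argument, namely making precise that $\sum_{Q\subset J}|Q|/|J|$ summed over the at most $C_02^{ds}$ nested $J^m$ containing $x$ is $O(1)$.
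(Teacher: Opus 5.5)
Your Step 1 is fine, and so is the domination of $F$ by $\|\Omega\|_\infty(\lambda/C_\Omega)\sum_j\chi_{E_j}*\psi_j$. The gap is in the route you finally commit to: a pointwise bound $\|F\|_\infty\lesssim\|\Omega\|_\infty\lambda/C_\Omega$ combined with $\|F\|_3^3\le\|F\|_\infty^2\|F\|_1$. That majorant is not uniformly bounded.

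The depth count only caps the number of contributing levels at $u_0\approx C_02^{ds}$. A single level, however, contributes $2^{-ds}\#\{Q\in\mathfrak{Q}_{j-s}:Q\subset J\}$, and this can be of order $1$ rather than $O(2^{-ds})$. Nothing forces $J$ to contain only $O(1)$ cubes $Q$: CZ cubes of side $2^{j-s}$ can fill a fixed fraction, e.g. $2^{-d}$, of $J$.

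Concretely, take nested dyadic cubes $J_1\supset\cdots\supset J_T$ with a common corner and $\ell(J_{t+1})=2^{-10}\ell(J_t)$. Let each $J_t\setminus J_{t+1}$ carry CZ cubes of side $2^{j_t-s}$ with density about $2^{-d}$, and put nothing at intermediate scales. With the central label $m$ and $i=1$, all $J_t$ lie in $H^0_{s,m}\setminus H^1_{s,m}$ as long as $T<\lceil C_02^{ds}\rceil$. At points of $J_T$ the majorant is then $\gtrsim T$. So the sum of $\sum_{Q\subset J}|Q|/|J|$ over the nested $J$ containing $x$ is not $O(1)$; it grows with the number of scales.

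Separately, your claim that a point lies in at most $u_0$ of the relevant $3J$ is not justified. The chain-length bound controls the cubes $J^m$ for a fixed $m$, and $x\in 3J$ does not give $x\in J^m$.

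The missing idea is to avoid any $L^\infty$ bound and prove your ``key remaining inequality'' directly, which is what the paper does.
\begin{itemize}
\item Expand $\|F\|_3^3$ into the ordered sum over $j_1\ge j_2\ge j_3$ and integrate out $x$ first. Since $|x-y_t|\le 2^{j_t}$, this gives
$\int\prod_t|K^s_{j_t}(x,y_t)|\,dx\lesssim 2^{-(j_1+j_2)d}\chi_{\{|y_1-y_2|\le 2^{j_1}\}}\chi_{\{|y_2-y_3|\le 2^{j_2}\}}$.
\item The input needed is then not the single-scale bound $\|\chi_{E_j}*\psi_j\|_\infty\lesssim 1$. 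It is the cumulative packing estimate
$\sum_{j'\le j}\int_{|y-y'|\lesssim 2^{j}}\mathcal{B}_{j'-s}(y')\,dy'\lesssim\lambda C_\Omega^{-1}2^{jd}$.
\item This follows from (cz-iii) and the disjointness of all CZ cubes across scales: every cube of side at most $2^{j-s}$ meeting the ball lies in its double.
\item Apply this to the $y_3$-sum and then to the $y_2$-sum. What remains is $(\lambda C_\Omega^{-1})^2\|\Omega\|_\infty^3\sum_j\sum_{Q\in\mathfrak{Q}_{j-s}}\|b_Q\|_1$, and the bound $\lesssim 3^{-i}\|f\|_1$ for the last sum finishes the proof.
\end{itemize}
The $H^i_{s,m}$ structure enters only through that final $L^1$ bound on $\sum\|b_Q\|_1$, not through any depth count.
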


\begin{proof} 
For convenience, set $\mB_{m} = \sum\limits_{Q\in\mathfrak{Q}_{m}} b_Q$.
Then,
    \begin{align*}
    \Big\| \sum_{j\in\Z} \sum_{Q\in\mathfrak{Q}_{j-s}} |T_j^s b_Q|  \Big\|_{L^3(\R^d)}^3
    &\leq \int_{\R^d} \bigg( \sum_{j\in\Z} \int_{\R^d} |\Omega(x-y)| |K_j^s(x,y)| \mB_{j-s}(y) \dif y \bigg)^3 \dif x \\
    &\leq 3! \|\Omega\|_{\infty}^3 \sum_{j_1\geq j_2\geq j_3} \int_{\R^d} \prod_{t=1}^3 \bigg( \int_{\R^d} |K^s_{j_t}(x,y)| \mB_{j_t-s}(y) \dif y\bigg) \dif x,
    \end{align*}  
which is bounded by a constant multiple of
    \begin{align} \label{int int int int}
    \|\Omega\|_{\infty}^3 \sum_{j_1\geq j_2\geq j_3} \int_{\R^d} & \int_{\R^d} \int_{\R^d} \bigg( \int_{\R^d}  \prod_{t=1}^3 |K^s_{j_t}(x,y_t)| \dif x\bigg) \\
    & \cdot \mB_{j_1-s}(y_1) \mB_{j_2-s}(y_2) \mB_{j_3-s}(y_3) \dif y_3 \dif y_2 \dif y_1. \notag
    \end{align}
By (\ref{estimate for K_j^s(x,y)}) and the condition $j_1\geq j_2\geq j_3$, we get
    \begin{align*}
    |K^s_{j_1} (x, y_1)| \lesssim 2^{-j_1 d} \chi_{ \{|y_1-y_2|\leq 2^{j_1} \} },
    \quad |K^s_{j_2} (x, y_2)| \lesssim 2^{-j_2 d} \chi_{ \{|y_2-y_3|\leq 2^{j_2} \} },
    \end{align*}
    \begin{align*}
    |K^s_{j_3} (x, y_3)| \lesssim 2^{-j_3 d} \chi_{ \{|x-y_3|\leq 2^{j_3} \} },
    \end{align*}
which imply
    \begin{align*}
	\int_{\R^d}  \prod_{t=1}^3 |K^s_{j_t}(x,y_t)| \dif x
	\lesssim 2^{-(j_1 + j_2)d} \chi_{ \{|y_1-y_2|\leq 2^{j_1} \} } \chi_{ \{|y_2-y_3|\leq 2^{j_2} \} }.
    \end{align*}
Hence,
    \begin{align*}
    (\ref{int int int int})
    \lesssim \|\Omega\|_{\infty}^3 \sum_{j_1} 2^{-(j_1+j_2)d } & \int_{\R^d} \mB_{j_1-s}(y_1) \sum_{j_2\leq j_1} \int_{\R^d} \mB_{j_2-s}(y_2) \chi_{ \{|y_1-y_2|\leq 2^{j_1} \} } (y_2) \\
    &\sum_{j_3\leq j_2} \int_{\R^d} \mB_{j_3-s}(y_3) \chi_{ \{|y_2-y_3|\leq 2^{j_2} \} } (y_3) \dif y_3 \dif y_2 \dif y_1.
    \end{align*}

We next observe that
    \begin{align} \label{sum int B_{j_3-s} chi dy_3}
    \sum_{j_3\leq j_2} \int_{\R^d} \mB_{j_3-s}(y_3) \chi_{ \{|y_2-y_3|\leq 2^{j_2} \} } (y_3) \dif y_3
    \lesssim \lambda C_{\Omega}^{-1} 2^{j_2 d}.
    \end{align}
Indeed, using the definition of $\mB_m$, we estimate the left-hand side of (\ref{sum int B_{j_3-s} chi dy_3}) as
    \begin{align*}
    \sum_{j_3\leq j_2} \sum_{Q\in\mathfrak{Q}_{j-s}} \int_{|y_2-y_3|\lesssim 2^{j_2}} b_Q(y_3) \dif y_3 
    &\lesssim \lambda C_{\Omega}^{-1} \sum_{j_3\leq j_2}  \sum_{{\begin{subarray}{c} Q\in\mathfrak{Q}_{j-s} \\ Q\bigcap B_{2^{j-2}}(y_2)  \neq \varnothing \end{subarray}}} |Q| \\
    &\lesssim \lambda C_{\Omega}^{-1} |B_{2^{j-2}}(y_2)| \\
    &\lesssim \lambda C_{\Omega}^{-1} 2^{j_2 d},
    \end{align*}    
which gives (\ref{sum int B_{j_3-s} chi dy_3}).

Then, the same argument applies to the sum over \(j_2\le j_1\) and the integral over \(y_2\). 
Therefore,
    \begin{align*}
    (\ref{int int int int})
    &\lesssim (\lambda C_{\Omega})^2 \|\Omega\|_{\infty}^3 \sum_{j_1} \int_{\R^d} \mB_{j_1-s} (y_1) \dif y_1 
    \leq (\lambda C_{\Omega})^2 \|\Omega\|_{\infty}^3 \sum_{j} \sum_{Q\in\mathfrak{Q}_{j-s}} \|b_Q\|_{L^1(\R^d)} \\
    &\lesssim \|\Omega\|_{\infty}^3 \Big( \lambda^2 C_{\Omega}^2 3^{-i} \|f\|_{L^1(\R^d)} \Big),
    \end{align*}
where the last inequality follows from (\ref{sum|b_Q|_1}).	
\end{proof}

Now, since $|\chi_{J^m}| \leq 1$, $|\widetilde{\chi}_{J^m}| \leq 1$, and by Lemma \ref{sum sum |T^s_j b_Q|}, we have
    \begin{align*}
    \text{LHS of } (\ref{L_3 estimate for I_1})
    \lesssim \Big\| \sum_{j\in\Z} \sum_{Q\in\mathfrak{Q}_{j-s}} | T_{j}^s b_{Q} | \Big\|_{L^3(\R^d)}
    \lesssim \|\Omega\|_{\infty} \Big( \lambda^2 C_{\Omega}^{-2}  3^{-i} \|f\|_{L^1(\R^d)} \Big)^{1/3},
    \end{align*}
which proves (\ref{L_3 estimate for I_1}).

\medskip
\subsection{Proof of (\ref{estimate for I_2})}    
   
We need the following $L^1$ and $L^3$ estimates for $I_2$:
    \begin{align} \label{L_1 estimate for I_2}
    \Big\| \sum_{j\in\Z} \sum_{Q\in\mathfrak{Q}_{j-s}} I_2  \Big\|_{L^1(\R^d)}
    \lesssim (2^{-s(1-\kappa)} + 2^{-s}s^{2\gamma^{-1}}) 3^{-i} \|\Omega\|_{\infty} \|f\|_{L^1(\R^d)}.
    \end{align}
    \begin{align} \label{L_3 estimate for I_2}
    \Big\| \sum_{j\in\Z} \sum_{Q\in\mathfrak{Q}_{j-s}} I_2  \Big\|_{L^3(\R^d)}
    \lesssim \|\Omega\|_{\infty} \big( \lambda^2 C_{\Omega}^{-2}  3^{-i} \|f\|_{L^1(\R^d)} \big)^{1/3}.
    \end{align}       
Interpolating between (\ref{L_1 estimate for I_2}) and (\ref{L_3 estimate for I_2}), and using $\|\Omega\|_{\infty} \le 2^{s\iota}\|\Omega\|_{1} \le 2^{s\iota}\mathcal{C}_\Omega$, we have
    \begin{align*}
    &\text{LHS of } (\ref{estimate for I_2}) \\
    &\lesssim \Big( (2^{-s(1-\kappa)} + 2^{-s}s^{2\gamma^{-1}}) 3^{-i} \|\Omega\|_{\infty} \|f\|_{L^1(\R^d)} \Big)^{\frac{1}{4}} 
    \cdot \Big( \|\Omega\|_{\infty} \big( \lambda^2 C_{\Omega}^{-2}  3^{-i} \|f\|_{L^1(\R^d)} \big)^{\frac{1}{3}} \Big)^{\frac{3}{4}}  \\
    &\lesssim 3^{-\frac{i}{2}} \big( 2^{-s(\frac{1}{4}(1-\kappa)-\iota)} + 2^{-s(\frac{1}{4}-\iota)} s^{\frac{1}{2} \gamma^{-1}} \big) \big( \lambda C_{\Omega} \|f\|_{L^1(\R^d)} \big)^{\frac{1}{2}}.
    \end{align*}  
Choosing $\iota$ such that $\frac14(1-\kappa) > \iota$, and setting
    \begin{align*}
    \delta_2 
    :x= \min \big\{ \frac{1}{4}(1-\kappa)-\iota,\ \frac{1}{4}-\iota\big\},
    \end{align*}   
we obtain \eqref{estimate for I_2}.

\medskip
\noindent\textbf{Proof of the \(L^1\) estimate \eqref{L_1 estimate for I_2}.} 
From (\ref{sum|b_Q|_1}), it suffices to show
    \begin{align} \label{L_1 estimate for I_2-2}
    \| I_2 \|_{L^1(\R^d)}
    \lesssim (2^{-s(1-\kappa)} + 2^{-s}s^{2\gamma^{-1}}) \|\Omega\|_{\infty} \|b_Q\|_{L^1(\R^d)}.
    \end{align}
To prove this, for a fixed $x\in\R^d$, we observe that
    \begin{align*}
    I_2 (x) 
    &= \epsilon_{J_Q} \int_{\R^d} \omega_{j-s\kappa}(x-u) (T_j^sb_Q)(u) \widetilde{\chi}_{J_Q^m}(u) \dif u \\
    &= \epsilon_{J_Q} \int_Q b_Q(y) \Big( \int_{\R^d} \omega_{j-s\kappa}(x-u) \Omega(u-y) K_j^s(u,y) \widetilde{\chi}_{J_Q^m}(u) \dif u \Big) \dif y \\
    &= \epsilon_{J_Q} \int_Q b_Q(y) F_{j,s,\kappa}^x (y) \dif y,
    \end{align*}
where 
    \begin{align*}
    F_{j,s,\kappa}^x (y)
    := \int_{\R^d} \omega_{j-s\kappa}(x-y-z) \Omega(z) K_j^s(y+z,y) \widetilde{\chi}_{J_Q^m}(y+z) \dif z.
    \end{align*}
Using the cancellation property of $b_Q$ (see (cz-ii)) and the mean value theorem, we have
    \begin{align*}
    \int_Q b_Q(y) F_{j,s,\kappa}^x (y) \dif y
    &= \int_Q b_Q(y) \big(F_{j,s,\kappa}^x (y) - F_{j,s,\kappa}^x (c(Q)) \big) \dif y \\
    &= \int_Q b_Q(y) \Big( \int_0^1 \langle y-c(Q), \nabla (F_{j,s,\kappa}^x) (y_t) \rangle \dif t \Big) \dif y,
    \end{align*}
where $y_t := ty + (1-t)c(Q)$.
Then, a direct computation gives 
    \begin{align*}
    \int_{\R^d} | \nabla(F_{j,s,\kappa}^x) (y_t) | \dif x 
    \lesssim \|\Omega\|_{\infty} (2^{-j+s\kappa} + 2^{-j+s_{\gamma}}).
    \end{align*}
Combining this with \(|y-c(Q)|\lesssim 2^{j-s}\) and \eqref{2^{s_{gamma}}}, 
    \begin{align*}
    \| I_2 \|_{L^1(\R^d)}
    &\leq \int_{\R^d} \int_{Q} |b_Q(y)| \Big( \int_0^1 |y-c(Q)| |\nabla(F_{j,s,\kappa}^x) (y_t)| \dif t \Big) \dif y \dif x \\
    &\lesssim 2^{j-s} \int_Q |b_Q(y)| \Big(\int_{\R^d} \int_0^1  | \nabla(F_{j,s,\kappa}^x) (y_t) | \dif t \dif x \Big) \dif y \\
    &\lesssim (2^{-s(1-\kappa)} + 2^{-s}s^{2\gamma^{-1}}) \|\Omega\|_{\infty} \|b_Q\|_{L^1(\R^d)}.
    \end{align*}
This proves \eqref{L_1 estimate for I_2-2}, and hence \eqref{L_1 estimate for I_2}.

\medskip
\noindent\textbf{Proof of the \(L^3\) estimate \eqref{L_3 estimate for I_2}.}
The following lemma will be useful.
\begin{lemma} \label{sum sum P_{j-s kappa}(| T^s_j b_Q|)}
Using the notation established above, we have
    \begin{align*} 
    \Big\| \sum_{j\in\Z} \sum_{Q\in\mathfrak{Q}_{j-s}} P_{j-s\kappa} \big( | T_{j}^s b_{Q} | \big)  \Big\|_{L^3(\R^d)}
    \lesssim \|\Omega\|_{\infty} \Big( \lambda^2 C_{\Omega}^{-2}  3^{-i} \|f\|_{L^1(\R^d)} \Big)^{1/3}.
    \end{align*}	
\end{lemma}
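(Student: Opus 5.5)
The plan is to reduce Lemma~\ref{sum sum P_{j-s kappa}(| T^s_j b_Q|)} to Lemma~\ref{sum sum |T^s_j b_Q|} via a pointwise domination, rather than redoing the trilinear computation. The operator $P_{j-s\kappa}$ is convolution with $\omega_{j-s\kappa}$, which is nonnegative and of integral one, but its scale depends on $j$. So we cannot pull it out of the sum over $j$ as a single operator. Instead, we absorb it into the kernel.

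For fixed $j$ and $Q\in\mathfrak{Q}_{j-s}$, the first step is to write
\[
P_{j-s\kappa}\big(|T_j^s b_Q|\big)(x)\le \|\Omega\|_{\infty}\int_{\R^d}\widetilde K_j(x,y)\,|b_Q(y)|\dif y,
\]
where the modified kernel is
\[
\widetilde K_j(x,y):=\int_{\R^d}\omega_{j-s\kappa}(x-u)\,|K_j^s(u,y)|\dif u .
\]
The kernel bound \eqref{estimate for K_j^s(x,y)} gives $|K_j^s(u,y)|\lesssim 2^{-jd}\chi_{\{|u-y|\le 2^{j-1}\}}$. The mollifier $\omega_{j-s\kappa}$ is supported in $\{|x-u|\le 2^{j-s\kappa-5}\}\subset\{|x-u|\le 2^{j-5}\}$. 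Together these give
\[
\widetilde K_j(x,y)\lesssim 2^{-jd}\chi_{\{|x-y|\le 2^{j}\}} .
\]
This is the same size-and-support profile used in the proof of Lemma~\ref{sum sum |T^s_j b_Q|}. The only difference is a slightly larger radius; the lower annulus bound is lost, but that proof never used it.

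Next, I would rerun the argument of Lemma~\ref{sum sum |T^s_j b_Q|} with $\widetilde K_j$ in place of $|K_j^s|$. Cube the sum, symmetrize over the ordering $j_1\ge j_2\ge j_3$, and integrate out $x$ to obtain
\[
\int\prod_{t}\widetilde K_{j_t}(x,y_t)\dif x\lesssim 2^{-(j_1+j_2)d}\chi_{\{|y_1-y_2|\lesssim 2^{j_1}\}}\chi_{\{|y_2-y_3|\lesssim 2^{j_2}\}} .
\]
Then apply the packing estimate \eqref{sum int B_{j_3-s} chi dy_3} twice. That estimate uses only that the cubes $Q$ are disjoint with $\|b_Q\|_1\lesssim \lambda C_\Omega^{-1}|Q|$, and that they lie in a ball of radius $\lesssim 2^{j_2}$, resp.\ $2^{j_1}$. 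Finally, close with \eqref{sum|b_Q|_1}. This yields
\[
\|\Omega\|_\infty^3\,\lambda^2C_\Omega^{-2}\,3^{-i}\,\|f\|_{L^1(\R^d)},
\]
and taking cube roots gives the claim.

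The only point needing care is checking that the enlarged support radius, $2^j$ instead of $2^{j-1}$, does not break the packing step. The balls involved are still of radius comparable to $2^{j_2}$, resp.\ $2^{j_1}$, so this costs only harmless constant factors. No cancellation is needed anywhere, because everything is estimated in absolute value.
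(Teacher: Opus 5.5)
Your proposal is correct and is essentially the paper's proof. The paper also pulls out $\|\Omega\|_{\infty}$ and absorbs the mollifier into the modified kernel $W_j(x,z)=\int_{\R^d}\omega_{j-s\kappa}(x-y)\,|K_j^s(y,z)|\,\dif y$, then calls the rest ``completely analogous'' to the trilinear argument for $\sum_j\sum_Q|T_j^s b_Q|$. Your explicit check that $W_j(x,z)\lesssim 2^{-jd}\chi_{\{|x-z|\le 2^{j}\}}$, and that the enlarged radius only changes constants in the packing estimate, supplies exactly the details the paper omits.
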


\begin{proof}
Set $\mB_{m} = \sum\limits_{Q\in\mathfrak{Q}_{m}} b_Q$ as before.
Consequently,
    \begin{align*}
    &\Big\| \sum_{j\in\Z} \sum_{Q\in\mathfrak{Q}_{j-s}} P_{j-s\kappa} \big( | T_{j}^s b_{Q} | \big)  \Big\|_{L^3(\R^d)}^3 \\
    &\leq \int_{\R^d} \Big( \sum_{j\in\Z} \sum_{Q\in\mathfrak{Q}_{j-s}} \int_{\R^d} \Big( \int_{\R^d} \omega_{j-s\kappa}(x-y) |\Omega(y-z)| |K^s_j(y,z)| \dif y \Big) |b_Q(z)| \dif z\Big)^3 \dif x  \\
    &\lesssim \|\Omega\|_{\infty}^3 \sum_{j_1\geq j_2\geq j_3} \int_{\R^d} \prod_{t=1}^3 \bigg( \int_{\R^d} |W_{j_t}(x,y)| \mB_{j_t-s}(y) \dif y\bigg) \dif x,
    \end{align*}
where $W_j(x,z) := \int_{\R^d} |\omega_{j-s\kappa}(x-y)| |K_j^s(y,z)| \dif y $.	
The remainder of the argument is completely analogous to the proof of Lemma~\ref{sum sum |T^s_j b_Q|}; we omit the details to avoid repetition.

\end{proof}

Now, using $|\widetilde{\chi}_{J^m}|\le 1$ and Lemma \ref{sum sum P_{j-s kappa}(| T^s_j b_Q|)}, 
    \begin{align*}
    \text{LHS of } (\ref{L_3 estimate for I_2})
    \leq \Big\| \sum_{j\in\Z} \sum_{Q\in\mathfrak{Q}_{j-s}} P_{j-s\kappa} \big( | T_{j}^s b_{Q} | \big)  \Big\|_{L^3(\R^d)}
    \lesssim \|\Omega\|_{\infty} \Big( \lambda^2 C_{\Omega}^{-2}  3^{-i} \|f\|_{L^1(\R^d)} \Big)^{1/3},
    \end{align*}
completing the proof of (\ref{L_3 estimate for I_2}).

\medskip
\subsection{Proof of (\ref{estimate for I_3})}

Applying the Plancherel theorem, the Cauchy-Schwarz inequality, \eqref{Phi(2^s delta <e^s_v, xi/|xi|>)}, and then Plancherel again, we see
    \begin{align*}
    &\Big\| \sum_{j\in\Z} \sum_{Q\in\mathfrak{Q}_{j-s}} I_3  \Big\|_{L^2(\R^d)}^2 \\
    &\quad = \int_{\R^d} \Big|  \sum_v \Phi ( 2^{s\mu} \langle e^s_v,\ \xi/|\xi| \rangle ) \mF\Big( \sum_{j\in\Z} \sum_{Q\in\mathfrak{Q}_{j-s}} \big(I - P_{j-s\kappa} \big) \big( \epsilon_{J_Q}  \big( T_{j}^{s,v} b_{Q} \big)\widetilde{\chi}_{J_Q^m} \big) \Big) (\xi)  \Big|^2 \dif\xi	\\
    &\quad \lesssim 2^{s\mu(d-2)} \Big\| \sum_v \Big| \mF\Big( \sum_{j\in\Z} \sum_{Q\in\mathfrak{Q}_{j-s}} \big(I - P_{j-s\kappa} \big) \big( \epsilon_{J_Q}  \big( T_{j}^{s,v} b_{Q} \big)\widetilde{\chi}_{J_Q^m} \big) \Big) \Big|^2 \Big\|_{L^1(\R^d)} \\
    &\quad \lesssim 2^{s\mu(d-2)} \sum_v  \Big\| \sum_{j\in\Z} \sum_{Q\in\mathfrak{Q}_{j-s}} \big(I - P_{j-s\kappa} \big) \big( \epsilon_{J_Q}  \big( T_{j}^{s,v} b_{Q} \big)\widetilde{\chi}_{J_Q^m} \big) \Big\|_{L^2(\R^d)}.
    \end{align*}
We now state the following estimate, which will be proved below; it is the key ingredient for establishing \eqref{estimate for I_3}.   
    \begin{equation} \label{(I-P)(epsilon (T b_Q)chi)}
    \begin{aligned}
    \Big\| \sum_{j\in\Z} \sum_{Q\in\mathfrak{Q}_{j-s}} \big(I -  P_{j-s\kappa} \big) & \big( \epsilon_{J_Q}  \big( T_{j}^{s,v} b_{Q} \big)\widetilde{\chi}_{J_Q^m} \big) \Big\|_{L^2(\R^d)}^2 \\
    &\lesssim \lambda C_{\Omega} 2^{2s\iota-2s\mu (d-1)} 3^{-i} \|f\|_{L^1(\R^d)}.
    \end{aligned}
    \end{equation}   
Inserting (\ref{(I-P)(epsilon (T b_Q)chi)}) into the previous inequality and using \eqref{Card Theta}, we get   
    \begin{align*}
    \Big\| \sum_{j\in\Z} \sum_{Q\in\mathfrak{Q}_{j-s}} I_3  \Big\|_{L^2(\R^d)}^2
    \lesssim 2^{-s\mu + 2s\iota} 3^{-i} \|f\|_{L^1(\R^d)},
    \end{align*}    
Then, by choosing \(\mu > 2\iota > 0\) and setting 
    \[
    \delta_3 := \frac{1}{2} (\mu - 2\iota),
    \]
we obtain \eqref{estimate for I_3}.

It remains to prove (\ref{(I-P)(epsilon (T b_Q)chi)}).
First, it follows from (\ref{estimate for K_j^s(x,y)}) that
    \begin{align*}
    |\big( T_{j}^{s,v} b_{Q} \big)(x)|
    &\leq \int_{\R^d} \big| \Omega(x-y) K_j^s(x,y) \chi_{E^s_v} \Big( \frac{x-y}{|x-y|} \Big) b_Q(y) \big| \dif y \\
    &\leq \|\Omega\|_{\infty} 2^{-jd} \int_{\R^d} \chi_{\{2^{j-5}\leq |x-y| \leq 2^{j-1}\}} \chi_{E^s_v} \Big( \frac{x-y}{|x-y|} \Big) |b_Q(y)| \dif y \\
    &\leq \|\Omega\|_{\infty} 2^{-jd} \chi_{E^{s,v}_j} * |b_Q|(x),
    \end{align*}
where
    \begin{align*}
    E^{s,v}_j := \{x\in\R^d: | \langle x,e^s_v \rangle |\leq 2^{j-1},\ |x-\langle x,e^s_v \rangle e^s_v|\leq 2^{j-s\mu-1}\},
    \end{align*}
and the volume of this set satisfies
    \begin{align} \label{|E^{s,v}_j|}
    |E^{s,v}_j| \lesssim 2^{jd-s\mu(d-1)}.
    \end{align}
Consequently, by the estimate for $ |\big( T_{j}^{s,v} b_{Q} \big)(x)|$ and $\int \omega_{j-s\kappa} =1$,
    \begin{equation} \label{(I-P)(epsilon(T b_Q)chi)}
    \begin{split}
    \big| \big(I - P_{j-s\kappa} \big) \big( \epsilon_{J_Q}  \big( T_{j}^{s,v} b_{Q} \big)\widetilde{\chi}_{J_Q^m} \big)(x) \big| 
    &\lesssim |\big( T_{j}^{s,v} b_{Q} \big)(x)| + |\big( \omega_{j-s\kappa} *\big( T_{j}^{s,v} b_{Q} \big) \big)(x)| \\
    & \lesssim \|\Omega\|_{\infty} 2^{-jd} \chi_{E^{s,v}_j} * |b_Q|(x) 
    \end{split}
    \end{equation}  
Hence, for a fixed unit vector $e^s_v\in\Theta_s$,
    \begin{align*} 
    \text{LHS of } (\ref{(I-P)(epsilon (T b_Q)chi)})
    &\lesssim \|\Omega\|_{\infty}^2 \int_{\R^d} \Big( \sum_{j\in\Z} 2^{-jd} \chi_{E^{s,v}_j}* \mB_{j-s}(x) \Big)^2 \dif x \\
    &= \|\Omega\|_{\infty}^2 \sum_{j\in\Z} \Big( 2^{-jd} \int_{\R^d} \big( \chi_{E^{s,v}_j} * \chi_{E^{s,v}_j} * \mB_{j-s} \big) \mB_{j-s} (x) \dif x \\
    &\quad \quad + 2 \sum_{i<j} 2^{-(i+j)d} \int_{\R^d} \big( \chi_{E^{s,v}_j} * \chi_{E^{s,v}_i} * \mB_{i-s} \big) \mB_{j-s} (x) \dif x \Big),
    \end{align*}
where \(\mathcal{B}_m = \sum\limits_{Q\in\mathfrak{Q}_m} |b_Q|\). 
Now, fix a $x\in\R^d$.
For $i\leq j$, a direct computation gives
    \begin{align*}
    \chi_{E^{s,v}_j} * \chi_{E^{s,v}_i} (x)
    &=  \chi_{E^{s,v}_j + E^{s,v}_i} (x) \int_{\R^d} \chi_{E^{s,v}_j}(y) \chi_{E^{s,v}_i}(x-y) \dif y \\
    &\lesssim 2^{id-s\mu(d-1)} \chi_{\widetilde{E}^{s,v}_j}(x),
    \end{align*}
where 
    \begin{align*}
    \widetilde{E}^{s,v}_j
    := E^{s,v}_j + E^{s,v}_j
    = \{u+v: u, v\in E^{s,v}_j\}.
    \end{align*}
Therefore,
    \begin{align*}
    \text{LHS of } (\ref{(I-P)(epsilon (T b_Q)chi)}) 
    \lesssim \|\Omega\|_{\infty}^2 \sum_{j\in\Z} 2^{-jd-s\mu (d-1)} \int_{\R^d} \sum_{i\leq j} \big( \chi_{\widetilde{E}^{s,v}_j} * \mB_{i-s} \big)(x) \mB_{j-s}(x) \dif x.
    \end{align*}
Using (cz-ii) and the volume estimate (\ref{|E^{s,v}_j|}), for any fixed \(x \in \R^d \),
    \begin{align*}
    \sum_{i\leq j} \big( \chi_{\widetilde{E}^{s,v}_j} * \mB_{i-s} \big)(x)
    &= \sum_{i\leq j} \sum_{{\begin{subarray}{c} Q\in\mathfrak{Q}_{j-s} \\ x-\widetilde{E}^{s,v}_j \bigcap Q \neq \varnothing \end{subarray}}} \int_Q |b_Q(y)| \dif y 
    \lesssim \lambda C_{\Omega}^{-1} \sum_{i\leq j} \sum_{{\begin{subarray}{c} Q\in\mathfrak{Q}_{j-s} \\ x-\widetilde{E}^{s,v}_j \bigcap Q \neq \varnothing \end{subarray}}} |Q| \\
    &\lesssim \lambda C_{\Omega}^{-1} |3\widetilde{E}^{s,v}_j| \\
    &\lesssim \lambda C_{\Omega}^{-1} 2^{jd-s\mu(d-1)}.
    \end{align*}
Finally, combining this with \(\|\Omega\|_{\infty} \le 2^{s\iota}\|\Omega\|_{1} \le 2^{s\iota}\mathcal{C}_\Omega\) and \eqref{sum|b_Q|_1}, we see
    \begin{align*}
    \text{LHS of } (\ref{(I-P)(epsilon (T b_Q)chi)})
    \lesssim \lambda C_{\Omega} 2^{2s\iota-2s\mu (d-1)} \sum_{j\in\Z} \int_{\R^d} \mB_{j-s} (x) \dif x
    \lesssim \lambda C_{\Omega} 2^{2s\iota-2s\mu (d-1)} 3^{-i} \|f\|_{L^1(\R^d)}.
    \end{align*}

\medskip
\subsection{Proof of (\ref{estimate for I_4})}

We assume the following $L^1$ estimate holds:
    \begin{equation}
    \begin{aligned} \label{L_1 estimate for I_4}
    \Big\| \sum_{j\in\Z} \sum_{Q\in\mathfrak{Q}_{j-s}} I_4  \Big\|_{L^1(\R^d)} 
    &\lesssim (C_{N_0} s^{2\gamma^{-1}N_0} 2^{-\beta_1 s} + 2^{-\beta_2 s}  \\
    &\quad \quad + s^{2\gamma^{-1}} 2^{-\beta_3 s} + 2^{-\beta_4 s}) 3^{-i}C_{\Omega} \|f\|_{L^1(\R^d)},	
    \end{aligned}
    \end{equation}
where \(N_0>0\) is a sufficiently large integer, and the exponents $\beta_1$, $\beta_2$, $\beta_3$, $\beta_4$ are defined by
    \[
    \beta_1 := \sigma N_0 - \big(\kappa N_0 + \mu N_0 + 2\mu ( \lfloor d/2 \rfloor + 1) + \iota \big), \qquad
    \beta_2 := 1-(2\sigma + \iota - \kappa),
    \]
    \[
    \beta_3 := 1 - (\sigma + \iota - \kappa),\qquad
    \beta_4 := 1 - (\sigma + \iota).
    \]
Here \(\sigma\in(0,1)\) is a parameter to be chosen later;
the constants \(\kappa\), \(\mu\), \(\iota\), and \(\gamma\) have already been introduced in previous sections.
They satisfy
    \begin{align} \label{0 < iota << mu << kappa << sigma < 1}
    0 < \iota \ll \mu \ll \kappa \ll \sigma \ll 1.
    \end{align}

We also need the following $L^3$ estimate:
    \begin{align} \label{L_3 estimate for I_4}
    \Big\| \sum_{j\in\Z} \sum_{Q\in\mathfrak{Q}_{j-s}} I_4  \Big\|_{L^3(\R^d)}
    \lesssim 2^{s\mu ( \lfloor d/2 \rfloor +1 ) +\frac{2}{3}s\mu (d-1)} \|\Omega\|_{\infty} \big( \lambda^2 C_{\Omega}^{-2}  3^{-i} \|f\|_{L^1(\R^d)} \big)^{1/3}.	
    \end{align}

Interpolating between (\ref{L_1 estimate for I_4}) and (\ref{L_3 estimate for I_4}), and using $\|\Omega\|_{\infty} \le 2^{s\iota}\|\Omega\|_{L^1}\le 2^{s\iota}\mathcal{C}_\Omega$, we have 
    \begin{align*}
    \text{LHS of } (\ref{estimate for I_4})
    &\lesssim \Big( (C_{N_0} s^{2\gamma^{-1}N_0} 2^{-\beta_1 s} + 2^{-\beta_2 s} + s^{2\gamma^{-1}} 2^{-\beta_3 s} + 2^{-\beta_4 s}) 3^{-i} C_{\Omega} \|f\|_{L^1(\R^d)}\Big)^{\frac{1}{4}} \\
    &\quad \quad \cdot \Big( 2^{s\mu(\lfloor d/2 \rfloor+1) +\frac{2}{3}s\mu (d-1) + s\iota}   C_{\Omega} \big( \lambda^2 C_{\Omega}^{-2}  3^{-i} \|f\|_{L^1(\R^d)} \big)^{\frac{1}{3}}\Big)^{\frac{3}{4}} \\
    &\lesssim \big( C_{N_0}s^{\frac{1}{2}N_0\gamma^{-1}} 2^{-\rho_1 s} + 2^{-\rho_2 s} + s^{\frac{1}{2} \gamma^{-1} } 2^{-\rho_3 s}  + 2^{-\rho_4 s}\big) \big( \lambda C_{\Omega} 3^{-i} \|f\|_{L^1(\R^d)} \big)^{\frac{1}{2}},
    \end{align*}
where
    \begin{align*} 
    \rho_1 := \frac{1}{4} \Big( \sigma N_0 - \big( \kappa N_0 + \mu N_0 + 2\mu ([d/2] + 1) + \iota \big) \Big) - \frac{3}{4} \big( \mu( \lfloor d/2 \rfloor ) +\frac{2}{3}\mu (d-1) + \iota \big), 
    \end{align*}
    \begin{align*} 
    \rho_2 := \frac{1}{4} \big( 1-(2 \sigma + \iota - \kappa) \big) - \frac{3}{4} \Big( \mu ( \lfloor d/2 \rfloor + 1) +\frac{2}{3}\mu (d-1) + \iota \Big), 
    \end{align*}
    \begin{align*} 
    \rho_3 := \frac{1}{4} \big( 1 - ( \sigma + \iota - \kappa) \big) - \frac{3}{4} \Big(s\mu( \lfloor d/2 \rfloor + 1) + \frac{2}{3} \mu (d-1) + \iota \Big), 
    \end{align*}
    \begin{align*} 
    \rho_4 := \frac{1}{4} \big( 1 - ( \sigma + \iota) \big) - \frac{3}{4} \Big( \mu( \lfloor d/2 \rfloor + 1 ) +\frac{2}{3} \mu (d-1) + \iota \Big).
    \end{align*}
The parameter condition \eqref{0 < iota << mu << kappa << sigma < 1}, together with \(N_0\) chosen sufficiently large, ensures that \(\rho_1,\rho_2,\rho_3,\rho_4 > 0\).

Finally, by setting
    \[
    \delta_4 := \min\{\rho_1,\rho_2,\rho_3,\rho_4\},
    \]
we obtain the desired estimate \eqref{estimate for I_4}.

We now turn to the proofs of the estimates used above.
We first prove the \(L^3\) estimate \eqref{L_3 estimate for I_4}, since the \(L^1\) estimate \eqref{L_1 estimate for I_4} is more involved.

\medskip
\noindent\textbf{Proof of the \(L^3\) estimate \eqref{L_3 estimate for I_4}.}
A direct calculation gives, for any multi-index \(\alpha\),
    \[
    |\partial_\xi^\alpha (1-\Phi(2^{s\mu}\langle e_v^s,\xi/|\xi|\rangle))|
    \lesssim 2^{s\mu|\alpha|}|\xi|^{-|\alpha|}.
    \]
By taking \(|\alpha|\le \lfloor d/2\rfloor+1\), the right-hand side is bounded by \( 2^{s\mu(\lfloor d/2\rfloor+1)}|\xi|^{-|\alpha|} \).
Since \(I-G_v^s\) is a Fourier multiplier with symbol \(1-\Phi(2^{s\mu}\langle e_v^s,\xi/|\xi|\rangle)\), the multiplier satisfies the hypotheses of Lemma~\ref{bounded T_m} with constant \(M = C 2^{s\mu(\lfloor d/2\rfloor+1)}\). 
Consequently, \(I-G_v^s\) is bounded on \(L^p\) for all \(1<p<\infty\). 
In particular, for $p=3$,
    \begin{align*} 
    \|I-G^s_v \|_{L^3 \rightarrow L^3}
    \lesssim 2^{s\mu ( \lfloor d/2 \rfloor +1)}.
    \end{align*}
By the triangle inequality, the above $L^3$ bound for \(I-G_v^s\), and the pointwise estimate \eqref{(I-P)(epsilon(T b_Q)chi)}, we obtain
    \begin{align*}
    \text{LHS of } (\ref{L_3 estimate for I_4})
    &\lesssim 2^{s\mu ( \lfloor d/2 \rfloor +1)} \sum_v \Big\|  \sum_{j\in\Z} \sum_{Q\in\mathfrak{Q}_{j-s}} \big(I - P_{j-s\kappa} \big) \big( \epsilon_{J_Q}  \big( T_{j}^{s,v} b_{Q} \big)\widetilde{\chi}_{J_Q^m} \big)  \Big\|_{L^3(\R^d)} \\
    &\lesssim 2^{s\mu ( \lfloor d/2 \rfloor +1)} \|\Omega\|_{\infty} \sum_v \Big\|  \sum_{j\in\Z} 2^{-jd} \chi_{E^{s,v}_j}* \mB_{j-s}  \Big\|_{L^3(\R^d)},
    \end{align*}
where $\mathcal{B}_m = \sum\limits_{Q\in\mathfrak{Q}_m} |b_Q|$.

We now estimate the \(L^3\) norm in the sum above.
Using the definition of convolution,
    \begin{align*}
    \Big\|  \sum_{j\in\Z} 2^{-jd} \chi_{E^{s,v}_j}* \mB_{j-s}  \Big\|_{L^3(\R^d)} 
    &\lesssim \sum_{j_1\geq j_2\geq j_3} \int_{\R^d} \prod_{t=1}^3 2^{-j_t d} (\chi_{E^{s,v}_{j_t} * \mB_{j_t-s}} ) (x) \dif x \\
    &= \sum_{j_1\geq j_2\geq j_3} \int_{\R^d} \int_{\R^d} \int_{\R^d} \Big(\int_{\R^d} \prod_{t=1}^3 2^{-j_t d} \chi_{E^{s,v}_{j_t}}(x-y_t) \dif x \Big) \\
    &\quad \quad \quad \quad \quad \cdot \mB_{j_1-s}(y_1) \mB_{j_2-s}(y_2) \mB_{j_3-s}(y_3) \dif y_1 \dif y_2 \dif y_3.
    \end{align*}
Since $E^{s,v}_j \subseteq \{x\in\R^d: |x|\lesssim 2^j \}$, the support conditions imply $|x-y_t|\lesssim 2^j$, $j=1,2,3$. 
Using the volume estimate \eqref{|E^{s,v}_j|}, we have
    \begin{align*}
    &\int_{\R^d} \prod_{t=1}^3 2^{-j_t d} \chi_{E^{s,v}_{j_t}}(x-y_t) \dif x \\
    &\quad \lesssim 2^{-(j_1+j_2+j_3)d} \chi_{\{|y_1-y_2|\lesssim 2^{j_1}\}}(y_1) \chi_{\{|y_2-y_3|\lesssim 2^{j_2}\}}(y_2) \int_{\R^d} \chi_{E^{s,v}_{j_3}(x-y_3)} \dif x \\
    &\quad \lesssim 2^{-(j_1+j_2)d} 2^{-s\mu(d-1)} \chi_{\{|y_1-y_2|\lesssim 2^{j_1}\}}(y_1) \chi_{\{|y_2-y_3|\lesssim 2^{j_2}\}}(y_2).
    \end{align*}
Proceeding as in the proof of Lemma~\ref{sum sum |T^s_j b_Q|} (see \eqref{sum int B_{j_3-s} chi dy_3} and the subsequent steps), we obtain    
    \begin{align*}
    &\Big\| \sum_{j\in\Z} 2^{-jd} \chi_{E^{s,v}_j}* \mB_{j-s}  \Big\|_{L^3(\R^d)}^3 \\
    &\quad \lesssim 2^{-s\mu(d-1)} \sum_{j_1} 2^{-(j_1+j_2)d} \int_{\R^d} \mB_{j_1-s}(y_1) \sum_{j_2\leq j_1} \int_{\R^d} \mB_{j_2-s} (y_2) \chi_{\{|y_1-y_2|\lesssim 2^{j_1}\}}(y_2) \\
    &\quad\quad \sum_{j_3\leq j_2} \int_{\R^d} \mB_{j_3-s} (y_3) \chi_{\{|y_2-y_3|\lesssim 2^{j_1}\}}(y_3) \dif y_3 \dif y_2 \dif y_1 \\
    &\quad \lesssim 2^{-s\mu(d-1)} (\lambda C_{\Omega}^{-1})^2 \sum_{j\in\Z} \sum_{Q\in\mathfrak{Q}_{j-s}} \|b_Q\|_{L^1(\R^d)}.
    \end{align*}
Combining this with \eqref{Card Theta} and \eqref{sum|b_Q|_1} yields
    \begin{align*}
    \text{LHS of } (\ref{L_3 estimate for I_4})
    &\lesssim 2^{s\mu ( \lfloor d/2 \rfloor +1)} 2^{s\mu (d-1)} \|\Omega\|_{\infty} \Big(2^{-s\mu(d-1)} (\lambda C_{\Omega}^{-1})^2 3^{-i} \|f\|_{L^1(\R^d)} \Big)^{\frac{1}{3}} \\
    &= 2^{s\mu ( \lfloor d/2 \rfloor + 1) + \frac{2}{3}s\mu (d-1) } \|\Omega\|_{\infty} \Big( (\lambda C_{\Omega}^{-1})^2 3^{-i} \|f\|_{L^1(\R^d)} \Big)^{\frac{1}{3}},
    \end{align*}
which proves \eqref{L_3 estimate for I_4}.

\medskip
\noindent\textbf{Proof of the \(L^1\) estimate \eqref{L_1 estimate for I_4}.}
Let $\psi\in C^\infty$ be supported in $\{\xi\in\mathbb{R}^d: 1/2\le|\xi|\le2\}$ such that $\sum\limits_{k\in\mathbb{Z}} \psi_k (\xi)=1$ for $\xi\neq 0$, where $\psi_k (\xi) = \psi(2^k \xi)$.
Choose a radial function $\widetilde\psi\in C^\infty$ with $0\le\widetilde\psi\le1$, $\widetilde\psi(\xi)=1$ on $1/2\le|\xi|\le2$, and $\operatorname{supp}\widetilde\psi\subset\{1/4\le|\xi|\le4\}$. 
Set $\widetilde\psi_k(\xi)=\widetilde\psi(2^k\xi)$. 
Then 
    \[
    \psi_k = \widetilde\psi_k\psi_k.
    \]
Define the operators $\Lambda_k$ and $\widetilde\Lambda_k$ by
    \begin{align*}
    \Lambda_k h := \mF^{-1}(\psi_k) * h, \quad
    \widetilde{\Lambda}_k h := \mF^{-1}(\widetilde{\psi}_k) * h.   
    \end{align*}
We have 
    \begin{align} \label{Lambda_k=, I=}
    \Lambda_k = \widetilde{\Lambda}_k \Lambda_k, \quad
    I= \sum_{k\in\Z} \Lambda_k^2. 
    \end{align}
Applying the second indentity in (\ref{Lambda_k=, I=}), the left-hand side of (\ref{L_1 estimate for I_4}) becomes    
    \begin{align*}
    &\Big\| \sum_{j\in\Z} \sum_{Q\in\mathfrak{Q}_{j-s}} \sum_v \sum_{k\in\Z} \big(I - P_{j-s\kappa} \big) \big(I - G^s_v \big) \Lambda_k^2 \big( \epsilon_{J_Q}  \big( T_{j}^{s,v} b_{Q} \big)\widetilde{\chi}_{J_Q^m} \big)  \Big\|_{L^1(\R^d)} \\
    &\quad \leq  \sum_{j\in\Z} \sum_{Q\in\mathfrak{Q}_{j-s}} \sum_v \sum_{k\in\Z}  \Big\| \big(I - P_{j-s\kappa} \big) \big(I - G^s_v \big) \Lambda_k^2 \big( \epsilon_{J_Q}  \big( T_{j}^{s,v} b_{Q} \big)\widetilde{\chi}_{J_Q^m} \big)  \Big\|_{L^1(\R^d)} \\
     &\quad \leq \sum_{j\in\Z} \sum_{Q\in\mathfrak{Q}_{j-s}} \sum_v \Big( \sum_{k\leq j-s \sigma} +  \sum_{k> j-s \sigma} \Big) \\
     &\qquad \qquad \qquad \big\| (I - P_{j-s\kappa}) \Lambda_k  \big\|_{L^1\rightarrow L^1}   \Big\| \big(I - G^s_v \big) \Lambda_k \big( \epsilon_{J_Q}  \big( T_{j}^{s,v} b_{Q} \big)\widetilde{\chi}_{J_Q^m} \big)  \Big\|_{L^1(\R^d)}.
    \end{align*}
Here we have split the sum over \(k\) into low and high frequencies with respect to the scale \(j-s\sigma\), where \(\sigma\in(0,1)\) satisfies the parameter condition (\ref{0 < iota << mu << kappa << sigma < 1}).

We need the following three technical lemmas, whose proofs are deferred to the next subsection.
\begin{lemma} \label{||I - P_{j-s kappa} Lambda_k||}
Using the notation established above, for fixed $j$, $k\in\Z$, we have
	\begin{align*}
	\big\| (I - P_{j-s\kappa}) \Lambda_k  \big\|_{L^1\rightarrow L^1}
	\lesssim \min\{1, 2^{j-s\kappa-k}\}.
	\end{align*}
\end{lemma}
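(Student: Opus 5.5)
The estimate $\|(I-P_{j-s\kappa})\Lambda_k\|_{L^1\to L^1}\lesssim\min\{1,2^{j-s\kappa-k}\}$ reduces to an $L^1$ bound on the convolution kernel of $(I-P_{j-s\kappa})\Lambda_k$. That kernel is $\mF^{-1}(\psi_k)-\omega_{j-s\kappa}*\mF^{-1}(\psi_k)$. By Young's inequality, the operator norm on $L^1$ is at most the $L^1$ norm of this kernel. We treat the two regimes $k\le j-s\kappa$ and $k>j-s\kappa$ separately.

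\textbf{The uniform bound.} The kernel $\mF^{-1}(\psi_k)(x)=2^{-kd}\mF^{-1}(\psi)(2^{-k}x)$ is an $L^1$-dilate of a Schwartz function, so $\|\mF^{-1}(\psi_k)\|_{L^1}=\|\mF^{-1}(\psi)\|_{L^1}\lesssim 1$ uniformly in $k$. Since $\omega\ge0$ and $\int\omega=1$, we have $\|\omega_{j-s\kappa}*g\|_{L^1}\le\|g\|_{L^1}$. Hence $\|(I-P_{j-s\kappa})\Lambda_k\|_{L^1\to L^1}\lesssim 1$ for all $j,k$, which settles the case $k\le j-s\kappa$.

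\textbf{The decaying bound for $k>j-s\kappa$.} Here the scale $2^{j-s\kappa}$ of $\omega_{j-s\kappa}$ is much smaller than the scale $2^k$ of the Littlewood–Paley kernel. Write $g=\mF^{-1}(\psi_k)$. Using $\int\omega_{j-s\kappa}=1$,
\[
(I-P_{j-s\kappa})g(x)=\int_{\R^d}\omega_{j-s\kappa}(u)\big(g(x)-g(x-u)\big)\,\dif u .
\]
Taking $L^1$ norms in $x$ gives
\[
\|(I-P_{j-s\kappa})g\|_{L^1}\le\sup_{|u|\le 2^{j-s\kappa-5}}\|g-g(\cdot-u)\|_{L^1},
\]
because $\supp\omega\subseteq\{|x|\le 2^{-5}\}$. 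By the mean value theorem, $\|g-g(\cdot-u)\|_{L^1}\le|u|\,\|\nabla g\|_{L^1}$. Scaling gives $\|\nabla g\|_{L^1}=2^{-k}\|\nabla\mF^{-1}(\psi)\|_{L^1}\lesssim 2^{-k}$. Therefore the norm is $\lesssim 2^{j-s\kappa-k}$.

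Combining the two bounds yields $\min\{1,2^{j-s\kappa-k}\}$.

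No step is a real obstacle. The only care needed is to use the moment condition $\int\omega=1$ rather than any Fourier-side cancellation. One could instead argue on the Fourier side via $(1-\widehat{\omega}(2^{j-s\kappa}\xi))\psi_k(\xi)$, but that requires an $L^1$ multiplier bound, which is less direct. The physical-space translation argument above avoids this.
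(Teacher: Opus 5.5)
Your proposal is correct and follows essentially the same route as the paper: the uniform bound comes from $\|\mF^{-1}(\psi_k)\|_{L^1}\lesssim 1$ together with $\|\omega_{j-s\kappa}\|_{L^1}=1$. The decaying bound comes from averaging $g-g(\cdot-u)$ against $\omega_{j-s\kappa}$ and applying the mean value theorem with $\|\nabla\mF^{-1}(\psi_k)\|_{L^1}\lesssim 2^{-k}$; you also state the uniform bound correctly, where the paper's corresponding display contains the typo ``$\leq \infty$''.
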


\begin{lemma} \label{sdhcjlah}
Using the notation established above, for fixed $j$, $k\in\Z$, $Q\in\mathfrak{Q}_{j-s}$, $e^s_v\in\Theta_s$, there exists an positive integer $N_1$, such that for every positive integer $N_0$,
    \begin{align*}
    &\big\| \big(I - G^s_v \big) \Lambda_k \big( \epsilon_{J_Q}  \big( T_{j}^{s,v} b_{Q} \big)\widetilde{\chi}_{J_Q^m} \big)  \big\|_{L^1(\R^d)} \\
    &\quad \lesssim C_{N_0} s^{2\gamma^{-1}N_0} 2^{-(j-k)N_0 + s\kappa N_0 + s\mu N_0 + 2s\mu N_1 + s\iota -s\iota(d-1)} \|\Omega\|_1 \|b_Q\|_{L^1(\R^d)}.
    \end{align*}
\end{lemma}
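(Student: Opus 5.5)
The plan is to prove Lemma~\ref{sdhcjlah} by a Seeger-type directional integration by parts, done separately for each direction $\theta\in E^s_v$. This converts the angular separation built into the symbol of $I-G^s_v$ into decay in $j-k$. By Minkowski's inequality in $y$ it suffices to fix $y\in Q$ and bound the $L^1$ norm of $(I-G^s_v)\Lambda_k$ applied to the function
\[
x\mapsto \Omega(x-y)K_j^{s,v}(x,y)\widetilde\chi_{J_Q^m}(x)
\]
by the stated constant. Integrating this bound against $|b_Q(y)|$ then gives the factor $\|b_Q\|_{L^1(\R^d)}$. Note that the cancellation of $b_Q$ is not needed here: all the gain will come from the frequency side.

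\textbf{Step 1: polar coordinates.} Write $x-y=r\theta$ with $\theta\in E^s_v$. The function above becomes
\[
\int_{E^s_v}\Omega(\theta)\,\mu_{y,\theta}\,\dif\theta ,
\]
where $\mu_{y,\theta}$ is the measure on the ray $\{y+r\theta: r>0\}$ with density
\[
g_{y,\theta}(r)=r^{d-1}K_j^s(y+r\theta,y)\,\widetilde\chi_{J_Q^m}(y+r\theta).
\]
By \eqref{estimate for K_j^s(x,y)}, $g_{y,\theta}$ is supported in $r\approx 2^j$ and is bounded by $2^{-j}$. Its $r$-derivatives come from two sources:
\begin{itemize}
\item the smoothing of $K_j^s$ at scale $2^{j-s_\gamma}$, which costs $2^{-j}2^{s_\gamma}\approx 2^{-j}s^{2\gamma^{-1}}$ per derivative by \eqref{2^{s_{gamma}}};
\item the cutoff $\widetilde\chi_{J_Q^m}$, which costs $2^{-j+s\kappa}$ per derivative by \eqref{partial_x^alpha widetilde{chi}_{J^m} (x)}.
\end{itemize}
Hence $\|\partial_r^{N_0}g_{y,\theta}\|_{L^1(\dif r)}\lesssim C_{N_0}\,(s^{2\gamma^{-1}}2^{s\kappa}2^{-j})^{N_0}$.

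\textbf{Step 2: integration by parts in frequency.} On the support of $(1-\Phi(2^{s\mu}\langle e^s_v,\xi/|\xi|\rangle))\psi_k(\xi)$ we have $|\langle e^s_v,\xi/|\xi|\rangle|\ge 2^{1-s\mu}$ and $|\xi|\approx 2^{-k}$. Since $|\theta-e^s_v|\le 2^{-s\mu-2}$ for $\theta\in E^s_v$, it follows that $|\langle\theta,\xi\rangle|\gtrsim 2^{-k-s\mu}$. So on the Fourier side I can write
\[
\widehat{\mu_{y,\theta}}(\xi)=(i\langle\theta,\xi\rangle)^{-N_0}\,\widehat{\partial_r^{N_0}g_{y,\theta}\,\dif\sigma_{\text{ray}}}(\xi),
\]
after integrating by parts $N_0$ times along the ray. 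This shows that $(I-G^s_v)\Lambda_k\mu_{y,\theta}$ equals the multiplier operator with symbol
\[
m_\theta(\xi)=\bigl(1-\Phi(2^{s\mu}\langle e^s_v,\xi/|\xi|\rangle)\bigr)\,\psi_k(\xi)\,\langle\theta,\xi\rangle^{-N_0}
\]
applied to the measure $\partial_r^{N_0}g_{y,\theta}\,\dif\sigma_{\text{ray}}$.

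\textbf{Step 3: $L^1$ bound for the kernel of $m_\theta$.} This is the step I expect to be the main obstacle, because $m_\theta$ is anisotropic: each $\xi$-derivative costs $2^{k+s\mu}$ rather than $2^k$. I would not use Lemma~\ref{bounded T_m}, since it gives only $L^p$ bounds for $p>1$. Instead I would bound the kernel directly by the Bernstein/Sobolev inequality
\[
\|\mF^{-1}m\|_{L^1}\lesssim \sum_{|\alpha|\le N_1}2^{k|\alpha|}\,2^{-kd/2}\,\|\partial^\alpha m\|_{L^2},
\qquad N_1=\lfloor d/2\rfloor+1,
\]
which follows from Cauchy--Schwarz with the weight $(1+2^{-k}|x|)^{N_1}$. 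With $|m_\theta|\lesssim 2^{(k+s\mu)N_0}$ and each derivative costing at most $2^{k+s\mu}$, this should give
\[
\|\mF^{-1}m_\theta\|_{L^1}\lesssim 2^{kN_0+s\mu N_0+2s\mu N_1}.
\]
The factor $2s\mu N_1$ leaves room for the cruder loss coming from differentiating $\langle\theta,\xi\rangle^{-N_0}$ near its smallest admissible size. Young's inequality then bounds the $L^1$ norm of $(I-G^s_v)\Lambda_k\mu_{y,\theta}$ by this kernel norm times $\|\partial_r^{N_0}g_{y,\theta}\|_{L^1(\dif r)}$, which is the Step~1 bound.

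\textbf{Step 4: integrating over $\theta$.} Collecting Steps~1--3 gives, for each $\theta\in E^s_v$,
\[
\bigl\|(I-G^s_v)\Lambda_k\mu_{y,\theta}\bigr\|_{L^1}\lesssim C_{N_0}\,s^{2\gamma^{-1}N_0}\,2^{-(j-k)N_0+s\kappa N_0+s\mu N_0+2s\mu N_1}.
\]
It remains to integrate $|\Omega(\theta)|$ over $E^s_v$. Here I would use that $\Omega_2$, the only part of $\Omega$ entering this term after the splitting $\Omega=\Omega_1+\Omega_2$, satisfies $|\Omega_2|\le 2^{s\iota}\|\Omega\|_1$, together with $|E^s_v|\lesssim 2^{-s\mu(d-1)}$. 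The resulting small angular factor $\lesssim 2^{s\iota}\|\Omega\|_1$ (at worst) matches the stated power up to the harmless extra negative exponent. Finally, multiplying by $|b_Q(y)|$ and integrating over $y\in Q$ completes the plan.
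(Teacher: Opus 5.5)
Your proposal is correct and is essentially the paper's own argument: polar coordinates $t-y=r\theta$, then $N_0$ integrations by parts in $r$ producing $\langle\xi,\theta\rangle^{-N_0}$ with $|\langle\xi,\theta\rangle|\gtrsim 2^{-k-s\mu}$ on the support of $(1-\Phi)\psi_k$, then an $L^1$ kernel bound using $N_1=\lfloor d/2\rfloor+1$ derivatives in $\xi$, and finally $\int_{E^s_v}|\Omega_2|\lesssim 2^{s\iota-s\mu(d-1)}\|\Omega\|_1$. That last bound is what the paper's proof actually yields, and it implies the stated exponent since $\iota<\mu$. The differences are cosmetic: you bound the kernel via Cauchy--Schwarz and Plancherel rather than integrating by parts with $(I-2^{-2k}\Delta_\xi)^{N_1}$. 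One slip: because $|\xi|\approx 2^{-k}$, your displayed Sobolev inequality should carry the factor $2^{-k|\alpha|}2^{kd/2}$, not $2^{k|\alpha|}2^{-kd/2}$. Your weight $(1+2^{-k}|x|)^{N_1}$ gives exactly this corrected factor, and with it your kernel bound follows.
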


\begin{lemma} \label{kajsdhfska}
Using the notation established above, for fixed $j$, $k\in\Z$, $Q\in\mathfrak{Q}_{j-s}$, $e^s_v\in\Theta_s$, we have
    \begin{align*}
    &\big\| \big(I - G^s_v \big) \Lambda_k \big( \epsilon_{J_Q}  \big( T_{j}^{s,v} b_{Q} \big)\widetilde{\chi}_{J_Q^m} \big)  \big\|_{L^1(\R^d)} \\
    &\quad \lesssim 2^{-s\mu (d-1) + s\iota}(2^{j-s-k} + 2^{-s} s^{2\gamma^{-1}} + 2^{-s(1-\kappa)}) \|\Omega\|_1 \|b_Q\|_{L^1(\R^d)}.
    \end{align*}
\end{lemma}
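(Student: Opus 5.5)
We prove Lemma~\ref{kajsdhfska} by first discarding the multipliers $(I-G^s_v)\Lambda_k$ and then estimating the remaining function using the cancellation of $b_Q$. The symbol $(1-\Phi(2^{s\mu}\langle e^s_v,\xi/|\xi|\rangle))\psi(2^k\xi)$ is a dilate of a fixed compactly supported symbol whose derivatives of order $\le N$ are bounded by $2^{s\mu N}$. Its kernel therefore has $L^1$ norm $\lesssim 2^{s\mu N_1}$ with $N_1=d+1$. This loss of a power of $2^{s\mu}$ is not admissible in the target bound, whose only factor is $2^{-s\mu(d-1)+s\iota}$. So instead we write $\Lambda_k=\widetilde\Lambda_k\Lambda_k$ and exploit the fact that $T_j^{s,v}b_Q$ is supported in a tube of volume $\approx 2^{jd-s\mu(d-1)}$. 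A cleaner route is the following: bound $\|(I-G^s_v)\Lambda_k\|_{L^1\to L^1}$ by treating $I$ and $G^s_v\Lambda_k$ separately. Since $G^s_v\Lambda_k$ is convolution with an $L^1$-normalized kernel adapted to a plate, its $L^1\to L^1$ norm is $O(1)$ uniformly in $s$ (the angular cutoff $\Phi(2^{s\mu}\langle e,\xi'\rangle)$ localized to $|\xi|\sim2^{-k}$ is a smooth bump on a $2^{-k}\times\cdots\times 2^{-k-s\mu}$ slab). Hence
\[
\big\|(I-G^s_v)\Lambda_k F\big\|_{L^1}\lesssim \|\Lambda_k F\|_{L^1},\qquad F:=\epsilon_{J_Q}(T_j^{s,v}b_Q)\widetilde\chi_{J_Q^m}.
\]

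To control $\|\Lambda_k F\|_{L^1}$, we write, as in the proof of \eqref{L_1 estimate for I_2-2},
\[
F(x)=\int_Q b_Q(y)\,\Omega(x-y)\chi_{E^s_v}\!\Big(\tfrac{x-y}{|x-y|}\Big)K_j^s(x,y)\widetilde\chi_{J_Q^m}(x)\,\dif y .
\]
We then split $K_j^s(x,y)\widetilde\chi_{J_Q^m}(x)=K_j^s(x,c(Q))\widetilde\chi_{J_Q^m}(x)+R(x,y)$. For the remainder $R$, the smoothness of $K_j^s$ in $y$ gives $|\nabla_yK_j^s|\lesssim 2^{-jd}2^{-j+s_\gamma}$, so by \eqref{2^{s_{gamma}}} its contribution is bounded by
\[
2^{j-s}\,2^{-j}s^{2\gamma^{-1}}\,\|\Omega\chi_{E_v^s}\|\cdot\|b_Q\|_1 .
\]
The main term is $\int_Q b_Q(y)\,\Omega_v(x-y)\,a(x)\,\dif y$, where $a(x):=K_j^s(x,c(Q))\widetilde\chi_{J_Q^m}(x)$ and $\Omega_v$ denotes the restriction of $\Omega$ to the cone over $E^s_v$. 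This is not smooth in $y$, because $\Omega$ is rough. We therefore move the derivative onto $\Lambda_k$. Write $\Lambda_k(a\cdot(\Omega_v*b_Q))$ and commute: $\Lambda_k(a g)=a\Lambda_k g+[\Lambda_k,a]g$. The commutator satisfies $\|[\Lambda_k,a]\|_{L^1\to L^1}\lesssim 2^{k}\|\nabla a\|_\infty\cdot(\text{size})$. Since $\nabla a\sim 2^{-jd}2^{-(j-s\kappa)}$ by \eqref{partial_x^alpha widetilde{chi}_{J^m} (x)}, this produces the term $2^{-s(1-\kappa)}$ in the statement after using $k\le j-s$ (otherwise the bound is trivial from the first term). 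For $a\Lambda_k(\Omega_v\ast b_Q)$ we use $\Lambda_k(\Omega_v*b_Q)=(\Lambda_k\Omega_v)*b_Q$. The cancellation of $b_Q$ then gives
\[
\big|(\Lambda_k\Omega_v)*b_Q\big|\le 2^{j-s}\int |\nabla\mathcal F^{-1}\psi_k|*|\Omega_v|\,|b_Q|,
\]
where $\|\nabla \mathcal F^{-1}\psi_k\|_1\lesssim 2^{-k}$. This yields the term $2^{j-s-k}$.

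In each case the factor $2^{-s\mu(d-1)+s\iota}\|\Omega\|_1$ comes from $\int_{2^{j-5}\le|z|\le2^{j-1}}|\Omega_v(z)|2^{-jd}\,\dif z\le \|\Omega_2\|_{L^\infty}\,\sigma(E^s_v)\lesssim 2^{s\iota}\|\Omega\|_1\,2^{-s\mu(d-1)}$, using $|\Omega_2|\le 2^{\iota s}\|\Omega\|_1$ and $\operatorname{diam}E^s_v\lesssim 2^{-s\mu}$. Summing the three contributions gives the lemma. The main obstacle is the uniform $L^1$ boundedness of $(I-G^s_v)\Lambda_k$ without losing powers of $2^{s\mu}$. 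I would verify it by an explicit integration-by-parts kernel estimate in anisotropic coordinates aligned with $e^s_v$. If this fails, the plan is to absorb a loss $2^{Cs\mu}$ by shrinking $\mu$ relative to $\iota$. That is consistent with \eqref{0 < iota << mu << kappa << sigma < 1} only after adjusting those constants, so the first option is preferred.
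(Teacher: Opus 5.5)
Your first step agrees with the paper: you reduce to $\|\Lambda_k F\|_{L^1}$ via $\Lambda_k=\widetilde\Lambda_k\Lambda_k$ and the $s$-uniform $L^1$ bound for $G^s_v\widetilde\Lambda_k$. That bound is a standard fact from Seeger's work, which the paper simply quotes, so your fallback is unnecessary. Your remainder $R$ and the term $a\Lambda_k g$ with $g=\Omega_v*b_Q$ are also handled correctly; for the latter, dominate $|\nabla\mathcal{F}^{-1}\psi_k|$ by a radial majorant so that integrating against $|\Omega_v|$ still produces $\|\Omega_v\|_{L^1(\sS^{d-1})}$.

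The gap is the commutator term $[\Lambda_k,a]g$. Your bound for it is $2^k\|\nabla a\|_\infty\, 2^{jd}\|\Omega_v\|_{L^1(\sS^{d-1})}\|b_Q\|_{L^1}\approx 2^{k-j}(2^{s_\gamma}+2^{s\kappa})\|\Omega_v\|_{L^1(\sS^{d-1})}\|b_Q\|_{L^1}$. It never uses the mean zero of $b_Q$, and it is small only when $2^k\ll 2^{j-s\kappa}$. Your justification ``after using $k\le j-s$ (otherwise the bound is trivial from the first term)'' has the two regimes reversed. Since $2^{j-s-k}\ge 1$ exactly when $k\le j-s$, that is the range where the lemma is trivial, because $\|\Lambda_k F\|_{L^1}\lesssim\|F\|_{L^1}\lesssim\|\Omega_v\|_{L^1(\sS^{d-1})}\|b_Q\|_{L^1}$. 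The range $k>j-s$ is the only one with content, and it is where the lemma is applied ($k>j-s\sigma$ in the $L^1$ estimate for $I_4$). There your commutator bound is not admissible: at $k=j$ it is $\gtrsim 2^{s\kappa}$, while the target is $\approx 2^{-s(1-\kappa)}$. No sharper commutator estimate can fix this. For $2^k\gtrsim 2^j$ the functions $\Lambda_k(ag)$ and $a\Lambda_k g$ live on different sets, so estimating $[\Lambda_k,a]g$ is as hard as estimating $\Lambda_k(ag)$ itself.

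The missing idea is to freeze the smooth factor in the relative variable $x-y$ rather than in $x$. Then the main term is a genuine convolution and $\Lambda_k$ falls onto $b_Q$.
\begin{itemize}
\item Put $\alpha(w):=K_j^s(c(Q)+w,c(Q))\,\widetilde\chi_{J_Q^m}(c(Q)+w)$ and write $K_j^s(x,y)\widetilde\chi_{J_Q^m}(x)=\alpha(x-y)+R'(x,y)$.
\item The mean value theorem, in both variables of $K_j^s$ and for $\widetilde\chi_{J_Q^m}$, gives $|R'(x,y)|\lesssim 2^{-jd}(2^{-s}s^{2\gamma^{-1}}+2^{-s(1-\kappa)})$, supported in $2^{j-5}\le|x-y|\le 2^{j-1}$. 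This yields the last two terms of the lemma.
\item The main term is $h*b_Q$ with $h=\Omega_v\alpha$ and $\|h\|_{L^1}\lesssim\|\Omega_v\|_{L^1(\sS^{d-1})}\lesssim 2^{s\iota-s\mu(d-1)}\|\Omega\|_1$.
\item By the cancellation of $b_Q$, $\|\Lambda_k(h*b_Q)\|_{L^1}\le\|h\|_{L^1}\|\mathcal{F}^{-1}\psi_k*b_Q\|_{L^1}\lesssim 2^{j-s-k}\|h\|_{L^1}\|b_Q\|_{L^1}$.
\end{itemize}

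This is precisely what the paper's polar coordinates $t-y=r\theta$ accomplish: the rough factor $\Omega(\theta)$ does not move when $y$ is replaced by $c(Q)$. The difference then splits into the translation terms $A^{s,v}_{j,k,1},A^{s,v}_{j,k,2}$, which give $2^{j-s-k}$, and the smooth-factor terms $A^{s,v}_{j,k,3},A^{s,v}_{j,k,4}$, which give $2^{-s}s^{2\gamma^{-1}}$ and $2^{-s(1-\kappa)}$.
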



Using Lemma~\ref{||I - P_{j-s kappa} Lambda_k||} (with the bound \(1\)) and Lemma~\ref{sdhcjlah}, together with \eqref{Card Theta} and \eqref{sum|b_Q|_1}, the low-frequency part is estimated as follows:
    \begin{align*}
	&\sum_{j\in\Z}  \sum_{Q\in\mathfrak{Q}_{j-s}} \sum_v \sum_{k\leq j-s\sigma} \big\| (I - P_{j-s\kappa}) \Lambda_k  \big\|_{L^1\rightarrow L^1}   \Big\| \big(I - G^s_v \big) \Lambda_k \big( \epsilon_{J_Q}  \big( T_{j}^{s,v} b_{Q} \big)\widetilde{\chi}_{J_Q^m} \big)  \Big\|_{L^1(\R^d)} \\
	&\quad \lesssim  C_{N_0} s^{2\gamma^{-1}N_0} 2^{-s\sigma N_0 + s\kappa N_0 + s\mu N_0 + 2s\mu N_1 + s\iota} \|\Omega\|_1 \sum_{j\in\Z} \sum_{Q\in\mathfrak{Q}_{j-s}} \|b_Q\|_{L^1(\R^d)} \\
	&\quad \lesssim  C_{N_0} s^{2\gamma^{-1}N_0} 2^{-s\sigma N_0 + s\kappa N_0 + s\mu N_0 + 2s\mu N_1 + s\iota} 3^{-i} \|\Omega\|_1 \|f\|_{L^1(\R^d)}.
    \end{align*}
This gives the term \(C_{N_0} s^{2\gamma^{-1}N_0} 2^{-\beta_1 s}\) in \eqref{L_1 estimate for I_4}, with
    \[
    \beta_1 
    := \sigma N_0 - (\kappa N_0+\mu N_0+2\mu N_1 + \iota).
    \]

For the high-frequency part, we use Lemma~\ref{||I - P_{j-s kappa} Lambda_k||} (with the bound $2^{j-s\kappa-k}$), Lemma~\ref{kajsdhfska}, (\ref{Card Theta}),  and (\ref{sum|b_Q|_1}). 
Summing over $k > j - s \sigma$ gives a geometric series:
    \begin{align*}
    &\sum_{j\in\Z} \sum_{Q\in\mathfrak{Q}_{j-s}} \sum_v \sum_{k > j-s \sigma} \big\| (I - P_{j-s\kappa}) \Lambda_k  \big\|_{L^1\rightarrow L^1}   \Big\| \big(I - G^s_v \big) \Lambda_k \big( \epsilon_{J_Q}  \big( T_{j}^{s,v} b_{Q} \big)\widetilde{\chi}_{J_Q^m} \big)  \Big\|_{L^1(\R^d)} \\
    &\quad \lesssim   \sum_{j\in\Z} \sum_{Q\in\mathfrak{Q}_{j-s}} \sum_{k> j-s \sigma} 2^{j-s\kappa-k} 2^{s\iota} (2^{j-s-k} + 2^{-s} s^{2\gamma^{-1}} + 2^{-s(1-\kappa)}) \|\Omega\|_1 \|b_Q\|_{L^1(\R^d)} \\
    &\quad \lesssim \big(2^{2s\sigma - s\kappa + s\iota - s} + 2^{s\sigma - s\kappa + s\iota} (2^{-s} s^{2\gamma^{-1}} + 2^{-s(1-\kappa)})\big) 3^{-i} \|\Omega\|_1 \|f\|_{L^1(\R^d)}.
    \end{align*}
This yields the terms
    \[
    2^{-\beta_2 s},\qquad s^{2\gamma^{-1}}2^{-\beta_3 s},\qquad 2^{-\beta_4 s}, 
    \]
with 
    \[
    \beta_2 := 1 - (2\sigma + \iota - \kappa),\qquad
    \beta_3 := 1 - (\sigma + \iota - \kappa),\qquad
    \beta_4 := 1 - (\sigma + \iota).
    \]

Combining the estimates for the low- and high-frequency parts gives exactly \eqref{L_1 estimate for I_4}, we complete the proof.

\medskip
\subsection{Proofs of Lemma \ref{||I - P_{j-s kappa} Lambda_k||}-\ref{kajsdhfska} } 
We now prove the three technical lemmas used in the preceding section.

\begin{proof}[Proof of Lemma \ref{||I - P_{j-s kappa} Lambda_k||}]
By the definitions of $P_{j-s\kappa}$ and $\Lambda_k$, we have
    \begin{align*}
    \big\| (I - P_{j-s\kappa}) \Lambda_k (h) \big\|_{L^1(\R^d)}
    &\leq \big\| \mF^{-1}(\psi_k) - \omega_{j-s\kappa}*\mF^{-1}(\psi_k) \big\|_{L^1(\R^d)} \| h \|_{L^1(\R^d)}.
    \end{align*}
On the one hand, 
    \begin{align*}
    &\big\| \mF^{-1}(\psi_k) - \omega_{j-s\kappa}*\mF^{-1}(\psi_k) \big\|_{L^1(\R^d)} \\
    &\quad \leq \big\| \mF^{-1}(\psi_k)	 \big\|_{L^1(\R^d)} +  \| \omega_{j-s\kappa} \|_{L^1(\R^d)} \big\|\mF^{-1}(\psi_k) \big\|_{L^1(\R^d)} \\
    &\quad \leq \infty.
    \end{align*}
On the other hand, using the mean value theorem, $\| \nabla \mathcal{F}^{-1}(\psi_k) \|_{L^1(\R^d)} \lesssim 2^{-k}$ and $|y|\le 2^{j-s\kappa}$, we obtain
    \begin{align*}
    \int_{\R^d} \big| \mF^{-1}(\psi_k)(x) - \mF^{-1}(\psi_k)(x-y) \big| \dif x
    &\leq \int_0^1 \int_{\R^d} \big| \nabla \big(\mF^{-1}(\psi_k) \big)(x-ty) \big| \dif x \dif t \cdot|y| \\
    &\lesssim 2^{j-s\kappa} \big\| \mF^{-1}(\psi_k) \big\|_{L^1(\R^d)} \\
    &\lesssim  2^{j-s\kappa -k}.
    \end{align*}
Therefore,
    \begin{align*}
    &\big\| \mF^{-1}(\psi_k) - \omega_{j-s\kappa}*\mF^{-1}(\psi_k) \big\|_{L^1(\R^d)}  \\
    &\leq \int_{\R^d} \omega_{j-s\kappa}(y) \int_{\R^d} \big| \mF^{-1}(\psi_k)(x) - \mF^{-1}(\psi_k)(x-y) \big| \dif x \dif y \\
    &\lesssim 2^{j-s\kappa -k}.
    \end{align*}
Combining the two estimates gives
    \[
    \| (I-P_{j-s\kappa})\Lambda_k \|_{L^1\to L^1} \lesssim \min\{1,\; 2^{j-s\kappa - k}\},
    \]
which completes the proof.
\end{proof}

\medskip
\begin{proof}[Proof of Lemma \ref{sdhcjlah}]
By the definitions of $G^s_v$ and $\Lambda_k$,     
    \begin{align*}
    &\big(I - G^s_v \big) \Lambda_k \big( \epsilon_{J_Q}  \big( T_{j}^{s,v} b_{Q} \big) \widetilde{\chi}_{J_Q^m} \big) (x) \\
    &\quad= (2\pi)^{-\frac{d}{2}} \int_{\R^d} e^{i \langle x,\xi \rangle} \big( 1 - \Phi ( 2^{s\mu} \rangle e^s_v,\ \xi/|\xi| \rangle) \big) \psi_k(\xi) \mF\big( \epsilon_Q \big(T^{s,v}_j b_Q\big) \widetilde{\chi}_{J_Q^m} \big) \dif\xi.
    \end{align*}
Note that
    \begin{align*}
    &\mF\big( \epsilon_Q \big(T^{s,v}_j b_Q\big) \widetilde{\chi}_{J_Q^m} \big) (t) \\
    &\quad = (2\pi)^{-\frac{d}{2}} \epsilon_{J_Q} \int_{\R^d} \int_{\R^d} e^{-i \langle \xi,t \rangle } \Omega(t-y) K_j^s(t,y) \chi_{E^s_v}\Big( \frac{t-y}{|t-y|} \Big) b_Q(y) \widetilde{\chi}_{J_Q^m}(t) \dif y\dif t,
    \end{align*}       
we obtain   
    \begin{align*}
    \big(I - G^s_v \big) \Lambda_k \big( \epsilon_{J_Q}  \big( T_{j}^{s,v} b_{Q} \big) \widetilde{\chi}_{J_Q^m} \big) (x)
    = \epsilon_{J_Q} \int_Q D^{s,v}_{j,k} (x,y) b_Q(y) \dif y, 
    \end{align*}   
where the kernel \(D_{j,k}^{s,v}(x,y)\) is given by
    \begin{align*}
    D^{s,v}_{j,k} (x,y)
    = (2\pi)^{-d} \int_{\R^d} \int_{\R^d} & e^{i \langle x,\xi \rangle } \big( 1 - \Phi ( 2^{s\mu} \langle e^s_v,\ \xi/|\xi| \rangle ) \big) e^{-i \rangle \xi,t \rangle } \Omega(t-y)  \\
    & K_j^s(t,y) \chi_{E^s_v}\Big( \frac{t-y}{|t-y|} \Big) \widetilde{\chi}_{J_Q^m}(t) \psi_k(\xi) \dif t \dif\xi.
    \end{align*}    
From this representation, it follows immediately that
    \begin{align*}
    \big\| \big(I - G^s_v \big) \Lambda_k \big( \epsilon_{J_Q}  \big( T_{j}^{s,v} b_{Q} \big) \widetilde{\chi}_{J_Q^m} \big)  \big\|_{L^1(\R^d)}
    \leq \sup_{y\in Q} \|D^{s,v}_{j,k}(\cdot, y) \|_{L^1(\R^d)} \|b_Q\|_{L^1(\R^d)}.
    \end{align*}
So we are reduced to estimating the \(L^1\) norm of \(D_{j,k}^{s,v}(\cdot,y)\) uniformly in \(y\in Q\).

To proceed, set
    \begin{align*}
    h_{k,s,v} (\xi) 
    = \big( 1 - \Phi ( 2^{s\mu} \langle e^s_v,\ \xi/|\xi| \rangle ) \big) \psi_k(\xi).
    \end{align*}    
Making a change of variables $t-y=r\theta$, we rewrite $D^{s,v}_{j,k} (x,y)$ as
    \begin{equation}
    \begin{aligned} \label{D^{s,v}_{j,k} (x,y)}
    D^{s,v}_{j,k} (x,y)
    = (2\pi)^{-d}  &\int_{E^s_v} \Omega(\theta) \int_{\R^d} e^{i \langle x-y, \xi \rangle } h_{k,s,v}(\xi)  \\
    &\cdot \Big( \int_0^{+\infty}	 e^{-ir \langle \xi, \theta \rangle } K_j^s(r\theta+y,y) \widetilde{\chi}_{J_Q^m} (r\theta+y) r^{d-1} \dif r \Big) \dif\xi \dif\theta.
    \end{aligned}
    \end{equation}   
For the integral involving $r$, we integrate by parts $N_0$ times with respect to $r$.
Hence, this integral can be represented as  
    \begin{align*}
    (i \langle \xi, \theta \rangle )^{-N_0} \int_0^{+\infty} e^{-ir \langle \xi, \theta \rangle } \partial^{N_0}_r \big( K_j^s(r\theta+y,y) \widetilde{\chi}_{J_Q^m} (r\theta+y) r^{d-1} \big) \dif r.
    \end{align*}
Substituting this back into \eqref{D^{s,v}_{j,k} (x,y)} and then integrating by parts \(N_1\) times with respect to \(\xi\) (where \(N_1\) will be fixed below), the kernel \(D_{j,k}^{s,v}(x,y)\) equals
    \begin{equation}
    \begin{aligned} \label{D^{s,v}_{j,k} (x,y)-2}
    &(2\pi)^{-d} \int_{E^s_v} \Omega(\theta) \int_{ 2^{j-5} }^{ 2^{j-1} } \partial^{N_0}_r \big( K_j^s(r\theta+y,y) \widetilde{\chi}_{J_Q^m} (r\theta+y) r^{d-1} \big) \\
    &\qquad \int_{|\xi|\approx 2^{-k}} \frac{(I-2^{-2k} \Delta_{\xi})^{N_1} }{ \big(1+2^{-k} |x-y-r\theta|^2 \big)^{N_1} } \big( h_{k,s,v}(\xi) ( \langle \xi, \theta \rangle )^{-N_0} \big) e^{i \langle x-y-r\theta, \xi \rangle } \dif\xi \dif r \dif\theta,
    \end{aligned}
    \end{equation}
where the ranges of $r$ and $\xi$ follow from (\ref{estimate for K_j^s(x,y)}) and $\xi \in \supp\psi_k$, respectively.

To handle (\ref{D^{s,v}_{j,k} (x,y)-2}), we need the following three estimates: 
     \begin{align} \label{qwer}
     \big| \partial^{N_0}_r \big( K_j^s(r\theta+y,y) \widetilde{\chi}_{J_Q^m} (r\theta+y) r^{d-1} \big) \big| 
     \lesssim s^{2N_0 \gamma^{-1}} 2^{-(N_0+1)j + s\kappa N_0},	
     \end{align}
     
     \begin{align} \label{zxcv}
 	 \int_{|\xi|\approx 2^{-k}} \int_{\R^d} \big(1+2^{-k} |x-y-r\theta|^2 \big)^{-N_1} \dif x \dif \xi \lesssim 1, 
     \end{align} 
     
     \begin{align} \label{asdf}
 	 \big| (I-2^{-2k} \Delta_{\xi})^{N_1} \big( h_{k,s,v}(\xi) (i\langle \xi, \theta\rangle )^{-N_0} \big) \big|
 	 \lesssim C_{N_0} 2^{(k+s\mu)N_0 + 2s\mu N_1},
     \end{align}      
where \(N_1\) will be chosen in the proof of \eqref{zxcv}.

We first prove (\ref{qwer}). 
For \(2^{j-5}\le r\le 2^{j-1}\), we have 
    \[
    \big|\partial_r^i \big( K_j^s(r\theta+y,y) r^{d-1} \big) \big| \lesssim C_i 2^{-j(i+1)} s^{2i \gamma^{-1}},
    \]
and 
    \[
    \big| \partial_r^i \big( \widetilde{\chi}_{J_Q^m} (r\theta+y) \big) \big| \lesssim 2^{-i(j-s\kappa)}.
    \]
for $i \geq 1$.
Then, applying the product rule,
    \begin{align*}
    &\big|\partial^{N_0}_r \big( K_j^s(r\theta+y,y) \widetilde{\chi}_{J_Q^m} (r\theta+y) r^{d-1} \big) \big| \\
    &\quad \lesssim \sum_{i=0}^{N_0} \binom{N_0}{i} \big| \partial_r^i \big( K_j^s(r\theta+y,y) \big) \big| \big| \partial_r^{N_0-i} \big( \widetilde{\chi}_{J_Q^m} (r\theta+y) \big) \big| \\
    &\quad \lesssim C_{N_0} 2^{-(N_0+1)j + s\kappa N_0} s^{2N_0 \gamma^{-1}},
    \end{align*}
which proves (\ref{qwer}).

Next, we establish (\ref{zxcv}). 
Take $N_1 := \lfloor d/2 \rfloor +1$.
Consequently, it is easy to see
    \begin{align*}
    \int_{|\xi|\approx 2^{-k}} \int_{\R^d} \big(1+2^{-k} |x-y-r\theta|^2 \big)^{-N_1} \dif x \dif \xi
    \lesssim 2^{kd} \int_{|\xi|  \approx 2^{-k} } \dif\xi
    \lesssim 1,
    \end{align*}
so \eqref{zxcv} holds.

It remains to show (\ref{asdf}). 
In fact, we prove the more general estimate:
    \begin{align*} 
     \big| (I-2^{-2k} \Delta_{\xi})^{N} \big( h_{k,s,v}(\xi) (i\langle \xi, \theta\rangle )^{-N_0} \big) \big|
 	 \lesssim C_{N_0} 2^{(k+s\mu)N_0 + 2s\mu N},
     \end{align*}  
for every $N\geq 0$.
Consider the case $N = 0$.
Since $\theta\in E^s_v$, we get $|e^s_v - \theta| \leq 2^{-s\mu-4}$.
Furthermore, when $h_{k,s,v}(\xi)\neq 0$, we have $\Phi ( 2^{s\mu} \langle e^s_v,\ \xi/|\xi| \rangle ) \neq 1$, which implies $| \langle e^s_v,\ \xi/|\xi| \rangle | \gtrsim 2^{2-s\mu}$.
Combining these results with $|\xi|\approx 2^{-k}$ (since $\xi \in \supp\psi_k$), we see 
    \begin{align*}
    |\langle \xi,\theta \rangle|
    \geq \big( |\langle \xi/|\xi|, e^s_v \rangle | - |\langle\xi/|\xi|, e^s_v - \theta \rangle| \big) \cdot |\xi| 
    \gtrsim 2^{-k-s\mu}. 
    \end{align*} 
Therefore,
    \begin{align*}
    | h_{k,s,v}(\xi) (i \langle \xi, \theta \rangle)^{-N_0} | 
    \lesssim |\langle \xi, \theta \rangle|^{-N_0}
    \lesssim 2^{(k+s\mu)N_0}.	
    \end{align*}
For $N = 1$, expanding $\Delta_{\xi} \big( h_{k,s,v}(\xi) (i \langle \xi, \theta \rangle )^{-N_0} \big) $ as
    \begin{align*}
    (i \langle \xi, \theta \rangle )^{-N_0} \Delta_{\xi} h_{k,s,v} (\xi) 
    + 2 \nabla h_{k,s,v} (\xi) \cdot \nabla (i \langle \cdot, \theta \rangle )^{-N_0} (\xi) 
    + h_{k,s,v}(\xi) \Delta_{\xi} (i \langle \xi, \theta \rangle )^{-N_0} .
    \end{align*}
Using the standard derivative bounds: 
for any multi-index \(\alpha\),
    \[
    |\partial_\xi^\alpha h_{k,s,v}| \lesssim 2^{(s\mu+k)|\alpha|}, 
    \quad
    \big|\partial_\xi^\alpha \big(\langle \xi,\theta\rangle^{-N_0} \big) \big| \lesssim C_{N_0} 2^{(k+s\mu)(N_0+|\alpha|)},
    \]
we obtain
    \[
    |\Delta_{\xi} \big( h_{k,s,v}(\xi) (i \langle \xi, \theta \rangle )^{-N_0} | 
    \lesssim C_{N_0} 2^{(k+s\mu)(N_0+2)}.
    \]
Consequently,
    \[
    \big|(I-2^{-2k}\Delta_\xi) \big( h_{k,s,v}(\xi) (i \langle \xi, \theta \rangle)^{-N_0} \big) \big|
    \lesssim C_{N_0} 2^{(k+s\mu)N_0 + 2s\mu},
    \]
which proves the case \(N=1\).
The case \(N\ge2\) follows by induction on \(N\). Hence \eqref{asdf} holds for all \(N\ge0\), in particular for \(N=N_1\).

Finally, applying (\ref{qwer}), \eqref{zxcv}, (\ref{asdf}), and the condition $\|\Omega\|_{\infty} \leq 2^{s\iota} \|\Omega\|_{1}$ to \eqref{D^{s,v}_{j,k} (x,y)-2}, we have
    \begin{align*}
    (\ref{D^{s,v}_{j,k} (x,y)-2})
    &\lesssim s^{2N_0 \gamma^{-1}}  2^{-(N_0+1)j + s\kappa N_0} C_{N_0} 2^{(k+s\mu)N_0 + 2s\mu N_1} \int_{j-5}^{j-1} \int_{E^s_v} |\Omega(\theta)| \dif\theta \dif r \\
    &\lesssim C_{N_0} s^{2N_0 \gamma^{-1}} 2^{-(j-k)N_0 + s\kappa N_0 + s\mu N_0 + 2s\mu N_1 + s\iota - s\mu (d-1)} \|\Omega\|_1.
    \end{align*}  
This is exactly the bound required for \(\|D_{j,k}^{s,v}(\cdot,y)\|_{L^1(\R^d)}\), and hence completes the proof of the lemma.
 
\end{proof}

\medskip
\begin{proof}[Proof of Lemma \ref{kajsdhfska}]
First, we recall the standard estimate
    \begin{align*}
    \big\| \big(I - G^s_v \big) \widetilde{\Lambda}_k   \big\|_{L^1\rightarrow L^1}
    \lesssim 1,	
    \end{align*}
which holds uniformly in \(s\), \(e_v^s\), and \(k\) (see \cite{Seeger1996Singular}).    
Combining this with \(\Lambda_k = \widetilde{\Lambda}_k \Lambda_k\) (from \eqref{Lambda_k=, I=}), we obtain    
    \begin{align*}
    \big\| \big(I - G^s_v \big)  \Lambda_k \big( \epsilon_{J_Q}  \big( T_{j}^{s,v} b_{Q} \big)\widetilde{\chi}_{J_Q^m} \big)  \big\|_{L^1(\R^d)}
    \lesssim \big\| \Lambda_k \big( \epsilon_{J_Q}  \big( T_{j}^{s,v} b_{Q} \big)\widetilde{\chi}_{J_Q^m} \big)  \big\|_{L^1(\R^d)}.
    \end{align*}
By the definitions of $\Lambda_k$ and $T_j^{s,v}$, 
    \begin{align*}
    \Lambda_k \big( \epsilon_{J_Q}  \big( T_{j}^{s,v} b_{Q} \big) \widetilde{\chi}_{J_Q^m} \big)  (x)
    = \int_{\R^d} A^{s,v}_{j,k} (x,y) b_Q(y) \dif y,
    \end{align*}
where
    \begin{align*}
    A^{s,v}_{j,k} (x,y)
    = (2\pi)^{-d} \int_{\R^d} e^{i \langle x,\xi \rangle } \psi_k(\xi) \int_{\R^d} e^{-i \langle \xi,t \rangle} \Omega(t-y) K_j^s(t,y) \chi_{E^s_v}\big( \frac{t-y}{|t-y|} \big) \widetilde{\chi}_{J_Q^m}(t) \dif t \dif \xi.
    \end{align*}	
Using polar coordinates $t-y = r\theta$ and the support condition $2^{j-5}\le r\le 2^{j-1}$ (from \eqref{estimate for K_j^s(x,y)}), we rewrite $A_{j,k}^{s,v}$ as
    \begin{align*}
    &(2\pi)^{-d} \int_{E^s_v} \Omega(\theta) \int_{\R^d} \int_{2^{j-5}}^{2^{j-1}} e^{i \langle x-y-r\theta,\xi \rangle } \psi_k(\xi) K_j^s(y+r\theta,y) \widetilde{\chi}_{J_Q^m}(y+r\theta) r^{d-1} \dif r \dif \xi \dif\theta.
    \end{align*}
Integrating by parts $N_1 = \lfloor d/2 \rfloor +1$ times with respect to $\xi$ gives    
    \begin{align*}
    A^{s,v}_{j,k} (x,y)
    &= (2\pi)^{-d} \int_{E^s_v} \Omega(\theta) \int_{\R^d} \int_{2^{j-5}}^{2^{j-1}} e^{i \langle x-y-r\theta,\xi \rangle } \frac{(I-2^{-2k} \Delta_{\xi})^{N_1} \psi_k(\xi)}{\big(1+2^{-2k} |x-y-r\theta|^2 \big)^{N_1} }  \\ 
    & \quad \quad \quad \quad \quad \quad \quad \quad \quad \quad \quad \quad \quad \cdot K_j^s(y+r\theta,y)\widetilde{\chi}_{J_Q^m}(y+r\theta) r^{d-1} \dif r \dif \xi \dif\theta.
    \end{align*}    
Then, using this representation of $A^{s,v}_{j,k}$ and the cancellation condition of \(b_Q\) in (cz-ii), we have
    \begin{align*}
    \Lambda_k \big( \epsilon_{J_Q}  \big( T_{j}^{s,v} b_{Q} \big) \widetilde{\chi}_{J_Q^m} \big)  (x)
    &= \int_{\R^d} \big( A^{s,v}_{j,k} (x,y) - A^{s,v}_{j,k} (x,c(Q)) \big) b_Q(y) \dif y.
    \end{align*}
We decompose the difference $A^{s,v}_{j,k} (x,y) - A^{s,v}_{j,k} (x,c(Q)$ into four parts:
    \begin{align*}
    A^{s,v}_{j,k,1}(x,y)
    &= (2\pi)^{-d} \int_{E^s_v} \Omega(\theta) 
    \int_{\R^d} \int_{2^{j-5}}^{2^{j-1}} \big( e^{i\langle x-y-r\theta,\xi \rangle } - e^{i \langle x-c(Q)-r\theta,\xi \rangle } \big)  \\ 
    &\quad \frac{(I-2^{-2k} \Delta_{\xi})^{N_1} \psi_k(\xi)}{\big(1+2^{-2k} |x-y-r\theta|^2 \big)^{N_1} } K_j^s(y+r\theta,y) \widetilde{\chi}_{J_Q^m}(y+r\theta) r^{d-1} \dif r \dif \xi \dif\theta, \\
    A^{s,v}_{j,k,2}(x,y)
    &= (2\pi)^{-d} \int_{E^s_v} \Omega(\theta) 
    \int_{\R^d} \int_{2^{j-5}}^{2^{j-1}} e^{i \langle x-c(Q)-r\theta,\xi \rangle }  \\ 
    &\quad\Big( \frac{(I-2^{-2k} \Delta_{\xi})^{N_1} \psi_k(\xi)}{\big(1+2^{-2k} |x-y-r\theta|^2 \big)^{N_1} } - \frac{(I-2^{-2k} \Delta_{\xi})^{N_1} \psi_k(\xi)}{\big(1+2^{-2k} |x-c(Q)-r\theta|^2 \big)^{N_1} } \Big)\\ 
    &\quad K_j^s(y+r\theta,y) \widetilde{\chi}_{J_Q^m}(y+r\theta) r^{d-1} \dif r \dif \xi \dif\theta, \\	
    A^{s,v}_{j,k,3}(x,y)
    &= (2\pi)^{-d} \int_{E^s_v} \Omega(\theta) 
    \int_{\R^d} \int_{2^{j-5}}^{2^{j-1}} e^{i \langle x-c(Q)-r\theta,\xi \rangle } \frac{(I-2^{-2k} \Delta_{\xi})^{N_1} \psi_k(\xi)}{\big(1+2^{-2k} |x-y-r\theta|^2 \big)^{N_1}} \\ 
    &\quad \big( K_j^s(y+r\theta,y) -  K_j^s(c(Q)+r\theta,c(Q)) \big)\widetilde{\chi}_{J_Q^m}(y+r\theta) r^{d-1} \dif r \dif \xi \dif\theta, \\
    A^{s,v}_{j,k,4}(x,y)
    &= (2\pi)^{-d} \int_{E^s_v} \Omega(\theta) 
    \int_{\R^d} \int_{2^{j-5}}^{2^{j-1}} e^{i \langle x-c(Q)-r\theta,\xi \rangle } \frac{(I-2^{-2k} \Delta_{\xi})^{N_1} \psi_k(\xi)}{\big(1+2^{-2k} |x-y-r\theta|^2 \big)^{N_1} } \\ 
    &\quad K_j^s(c(Q)+r\theta,c(Q)) \big( \widetilde{\chi}_{J_Q^m}(y+r\theta) - \widetilde{\chi}_{J_Q^m}(c(Q)+r\theta) \big) r^{d-1} \dif r \dif \xi \dif\theta.
    \end{align*}
Consequently,
    \begin{align*}
    \big\| \Lambda_k \big( \epsilon_{J_Q}  \big( T_{j}^{s,v} b_{Q} \big) \widetilde{\chi}_{J_Q^m} \big) \big\|_{L^1(\R^d)}
    &\leq \sup_{y\in Q}  \big\| A^{s,v}_{j,k} (\cdot,y) - A^{s,v}_{j,k} (\cdot,c(Q))  \big\|_{L^1(\R^d)} \|b_Q\|_{L^1(\R^d)} \\
    &\leq  \sup_{y\in Q} \sum_{i=1}^4 \big\| A^{s,v}_{j,k,i} (\cdot,y) \big\|_{L^1(\R^d)} \|b_Q\|_{L^1(\R^d)}.
    \end{align*}
We now estimate each $A_{j,k,i}^{s,v}$ for $i$=1, 2, 3, 4.

\medskip 
\noindent\textbf{Estimate for $A_{j,k,1}^{s,v}$.}
It is straightforward to verify that
    \begin{align} \label{(I-2^{-2k}bigtriangleup)^N psi_k}
    \big| (I-2^{-2k} \Delta_{\xi})^{N_1} \psi_k(\xi) \big| \lesssim 1,
    \end{align}
and for $y \in Q$,
    \begin{align*}
    |e^{i \langle -y, \xi \rangle } - e^{i \langle -c(Q), \xi \rangle } | \lesssim 2^{j-s-\kappa}. 
    \end{align*}
Also, by \eqref{estimate for K_j^s(x,y)},
    \begin{align} \label{|K^s_j(y+r theta,y)| r^{d-1}}
    |K^s_j(y+r\theta,y) r^{d-1} |  
    \lesssim 2^{-j}.
    \end{align}
Furthermore, exactly as in the proof of \eqref{zxcv}, 
    \begin{align} \label{bnbnm}
 	\int_{|\xi|\approx 2^{-k}} \int_{\R^d} \big(1+2^{-2k} |x-y-r\theta|^2 \big)^{-N_1} \dif x \dif \xi 
 	\lesssim 1.
    \end{align}
Combining these estimates with $\|\Omega\|_{\infty}\le 2^{s\iota}\|\Omega\|_{1}$ and \eqref{|E^{s,v}_j|}, we get    
    \begin{align*}
    \int_{\R^d} \big| A^{s,v}_{j,k,1}(x,y) \big| \dif x
    \lesssim 2^{s\iota} 2^{j-s-\kappa} \|\Omega\|_1 |E^s_v|
    \lesssim 2^{-s\mu(d-1) + s\iota}  2^{j-s-\kappa} \|\Omega\|_1.
    \end{align*}

\medskip
\noindent\textbf{Estimate for $A_{j,k,2}^{s,v}$.}
For convenience, we set
    \begin{align*}
    V(y)= (1 + 2^{-2k} |x-y-r\theta|^2 )^{-N_1}. 
    \end{align*}
Therefore,  
    \begin{align*}
    |(\nabla V)(y)| 
    \lesssim 2^{-k} N_1 \frac{ 2^{-k}  |x-y-r\theta| }{ \big(1+2^{ -2k } |x-y-r\theta|^2 \big)^{N_1 + 1} }.
    \end{align*}    
By the mean value theorem,
    \begin{align*}
    |V(y) - V(c(Q))|
    &\leq \int_0^1 \big| \langle y-c(Q), (\nabla V) (ty + (1-t) c(Q) ) \rangle \big| \dif t \\
    &\lesssim 2^{j-s-k} N \int_0^1 \frac{ 2^{-k}  |x- (ty+(1-t)c(Q)) -r\theta| }{ \big(1+2^{-2k} |x- (ty+(1-t)c(Q)) -r\theta|^2 \big)^{N_1 + 1} } \dif t.
    \end{align*}
Meanwhile, a direct computation gives
    \begin{align*}
    \int_{|\xi|\approx 2^{-k}} \int_{\R^d} \frac{ 2^{-k}  |x- (ty+(1-t)c(Q)) -r\theta| }{ \big(1+2^{-2k} |x- (ty+(1-t)c(Q)) -r\theta|^2 \big)^{N_1 + 1} } \dif x \dif\xi
    \lesssim 1.
    \end{align*}
Finally, using (\ref{(I-2^{-2k}bigtriangleup)^N psi_k}), (\ref{|K^s_j(y+r theta,y)| r^{d-1}}), the above estimates, $\|\Omega\|_{\infty} \leq 2^{s\iota} \|\Omega\|_{1}$, and (\ref{|E^{s,v}_j|}), we obtain
    \begin{align*}
    \int_{\R^d} \big| A^{s,v}_{j,k,2}(x,y) \big| \dif  x 
    &\lesssim \int_{\R^d}  \int_{E^s_v}  |\Omega(\theta)| 
    \int_{\R^d} \int_{2^{j-5}}^{2^{j-1}}  \big|(I-2^{-2k} \Delta_{\xi})^{N_1} \psi_k(\xi) \big| \big| V(y)-V(c(Q)) \big| \\ 
    &\quad \quad \quad \big| K_j^s(y+r\theta,y) r^{d-1} \big| |\widetilde{\chi}_{J_Q^m}(y+r\theta)|  \dif r \dif \xi \dif\theta \\
    &\lesssim 2^{j-s-\kappa} 2^{ s\iota - s\mu (d-1) } \|\Omega\|_1.
    \end{align*}

\medskip
\noindent\textbf{Estimate for $A_{j,k,3}^{s,v}$.}
We first bound the difference of the kernels. 
Split $ \big| ( K_j^s(y+r\theta,y) -  K_j^s(c(Q)+r\theta,c(Q)) \big) \big| $ into
    \begin{align*}
    \big|K_j^s(y+r\theta,y) -  K_j^s(c(Q)+r\theta,y)\big| 
    + \big| K_j^s(c(Q)+r\theta,y) -  K_j^s(c(Q)+r\theta,c(Q)) \big|.
    \end{align*} 
Both terms are handled in the same way; we only treat the first.

For a fixed cube $Q\in\mathfrak{Q}_{j-s}$ and $y\in Q$, the mean value theorem gives
    \begin{align*}
    &|\eta_{j-s_{\gamma}}(y + r\theta - u) - \eta_{j-s_{\gamma}}(c(Q) + r\theta - u) | \\
    &\quad \leq \int_0^1 \big| \langle y-c(Q), (\nabla\eta_{j-s_{\gamma} \rangle) (ty + (1-t)c(Q) + r\theta -u) } \big| \dif t \\
    &\quad \lesssim 2^{j-s} \int_0^1 \big| (\nabla\eta_{j-s_{\gamma}}) (ty + (1-t)c(Q) + r\theta -u)  \big| \dif t.
    \end{align*}
Using the definition of \(K_j^s\) as a convolution of \(\eta_{j-s_\gamma}\) with \(K_j\), together with \eqref{|K_j(u,v)|} and \eqref{2^{s_{gamma}}}, we get
    \begin{align*}
    &\big|K_j^s(y+r\theta,y) -  K_j^s(c(Q)+r\theta,y)\big| \\
    &\quad \lesssim 2^{j-s-jd} \Big( \|\nabla \eta_{j-s_{\gamma}}\|_{\infty} \int_{|ty + (1-t)c(Q) + r\theta -u| \leq 2^{j-s_{\gamma}}} \dif u \Big)  \int_0^1 \int_{\R^d} |\eta_{j-s_{\gamma}} (y-v)| \dif v \dif t \\
    &\quad \lesssim s^{2 \gamma^{-1}} 2^{-jd-s}.
    \end{align*}

Now, applying this kernel difference estimate, $\|\Omega\|_{\infty} \leq 2^{s\iota} \|\Omega\|_{1}$, \eqref{(I-2^{-2k}bigtriangleup)^N psi_k}, \eqref{bnbnm} (with $c(Q)$ in place of $y$), and (\ref{|E^{s,v}_j|}), we have
    \begin{align*}
    &\int_{\R^d} \big| A^{s,v}_{j,k,3}(x,y) \big| \dif  x \\
    &\quad \lesssim \int_{\R^d} \int_{E^s_v} |\Omega(\theta)| 
    \int_{\R^d} \int_{2^{j-5}}^{2^{j-1}} \frac{ \big|(I-2^{-2k} \Delta_{\xi})^{N_1} \psi_k(\xi) \big| }{\big(1+2^{-2k} |x-y-r\theta|^2 \big)^{N_1} } \\
    &\quad \quad \quad \quad \big| K_j^s(y+r\theta,y) -  K_j^s(c(Q)+r\theta,c(Q)) \big| |\widetilde{\chi}_{J_Q^m}(y+r\theta)| r^{d-1} \dif r \dif \xi \dif\theta \dif x \\
    &\quad \lesssim 2^{s\iota - s\mu (d-1) -s} s^{2\gamma^{-1}} \|\Omega\|_1. 
    \end{align*}


\medskip
\noindent\textbf{Estimate for $A_{j,k,4}^{s,v}$.}
For a fixed cube $Q\in\mathfrak{Q}_{j-s}$ and $y\in Q$, the mean value theorem and $\| \nabla \widetilde{\chi}_{J_Q^m} \|_{\infty} = 2^{-j+s\kappa} \| \nabla \omega \| \lesssim 2^{-j+s\kappa}$ yield
    \begin{align*}
    \big| \widetilde{\chi}_{J_Q^m}(y+r\theta) - \widetilde{\chi}_{J_Q^m} \big(c(Q)+r\theta \big) \big|
    &\leq \int_0^1 \big\langle y-c(Q), \big(\nabla \widetilde{\chi}_{J_Q^m} \big) \big( ty + (1-t)c(Q) + r\theta \big) \big\rangle \dif t	\\
    &\lesssim 2^{j-s} \| \nabla \widetilde{\chi}_{J_Q^m} \|_{\infty} \\
    &\lesssim 2^{-s + s\kappa}.
    \end{align*}
Combining this with $\|\Omega\|_{\infty} \leq 2^{s\iota} \|\Omega\|_{1}$, \eqref{(I-2^{-2k}bigtriangleup)^N psi_k}, \eqref{|K^s_j(y+r theta,y)| r^{d-1}}, \eqref{bnbnm} (with \(c(Q)\) in place of \(y\)), and \eqref{|E^{s,v}_j|}, we see
    \begin{align*}
    &\int_{\R^d} \big| A^{s,v}_{j,k,4}(x,y) \big| \dif  x \\
    &\quad \lesssim \int_{\R^d} \int_{E^s_v} |\Omega(\theta)| 
    \int_{\R^d} \int_{2^{j-5}}^{2^{j-1}}  \frac{ \big|(I-2^{-2k} \Delta_{\xi})^{N_1} \psi_k(\xi) \big| }{\big(1+2^{-2k} |x-y-r\theta|^2 \big)^{N_1} } \big| K_j^s(c(Q)+r\theta,c(Q)) \big|\\ 
    &\quad  \quad \quad \quad \quad  \big| \widetilde{\chi}_{J_Q^m}(y+r\theta) - \widetilde{\chi}_{J_Q^m}(c(Q)+r\theta) \big| r^{d-1} \dif r \dif \xi \dif\theta \dif x \\
    &\quad \lesssim 2^{s\iota -s\mu(d-1)} 2^{-s+s\kappa} \|\Omega\|_1.
    \end{align*}

\medskip    
Collecting the estimates for \(A_{j,k,1}^{s,v}\) through \(A_{j,k,4}^{s,v}\), we arrive at
    \[
    \begin{aligned}
    &\| (I - G_v^s) \Lambda_k \bigl( \epsilon_{J_Q} (T_j^{s,v} b_Q) \widetilde{\chi}_{J_Q^m} \bigr) \|_{L^1(\R^d)} \\
    &\quad \lesssim 2^{-s\mu(d-1)+s\iota} \bigl( 2^{j-s-k} + 2^{-s}s^{2\gamma^{-1}} + 2^{-s(1-\kappa)} \bigr) \|\Omega\|_1 \|b_Q\|_{L^1(\R^d)}.
    \end{aligned}
    \]
This completes the proof of Lemma \ref{kajsdhfska}.    		
\end{proof}

\medskip
\section{applications of the main theorem} \label{Section: applications of the main theorem}
In this section, we prove the weak type $(1,1)$ estimates for the three maximal operators
introduced in the introduction. We refer to their definitions there and
focus on the kernel estimates needed to apply Theorem~\ref{main theorem}.
Throughout this section, $d\geq2$ and $C_\Omega$ is defined by \eqref{C_Omega}.

\medskip
\subsection{Maximal Calder\'on commutator} 
We first apply Theorem~\ref{main theorem} to the maximal Calder\'on
commutator. 
The Lipschitz condition on $A$ gives the required kernel estimates and leads to the following weak type $(1,1)$ bound.

    \begin{theorem} 
    Suppose $\Omega\in L \log L(\sS^{d-1})$ satisfies (\ref{homogeneous function of degree zero}), (\ref{cancellation condition of order 1}).
    Let $A$ be a Lipschitz function on $\R^d$.
    Then, the maximal Calder\'on commutator $T_{\Omega,A,*}$ defined as in (\ref{maximal Calderon commutator}) is of weak type $(1,1)$. 
    That is, for any $f\in L^1(\R^d)$ and $\lambda>0$, 
 	\[
 	\lambda \big| \{x\in\R^d: T_{\Omega, A, *}f(x)>\lambda \} \big| 
 	\lesssim C_{\Omega} \|\nabla A\|_{L^{\infty}(\R^d)} \|f\|_{L^1(\R^d)}.
 	\]
    \end{theorem}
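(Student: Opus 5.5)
The plan is to apply Theorem~\ref{main theorem} with $K=K_A(x,y)=\frac{A(x)-A(y)}{|x-y|^{d+1}}$. Three things must be checked: the size condition \eqref{Size condition} and the regularity conditions \eqref{regularity condition} with $\gamma=1$, each with constants $\lesssim\|\nabla A\|_{L^\infty}$, and the $L^2$ boundedness of $T_{\Omega,A,*}$. By homogeneity, we may normalize $\|\nabla A\|_{L^\infty}=1$, since both $T_{\Omega,A,*}$ and the kernel constants scale linearly in $A$. The constant $C_\Omega\|\nabla A\|_\infty$ then comes out of the main theorem. Note that the main theorem does not use the cancellation \eqref{cancellation condition}; only $\Omega\in L\log L$, homogeneity, and the $L^2$ bound are needed.

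\textbf{Size.} Since $|A(x)-A(y)|\le\|\nabla A\|_\infty|x-y|$, we get $|K_A(x,y)|\le \|\nabla A\|_\infty|x-y|^{-d}$.

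\textbf{Regularity in $x$.} Let $|x_1-y|\ge 2|x_1-x_2|$, so that $|x_2-y|\approx|x_1-y|$. Write
\[
K_A(x_1,y)-K_A(x_2,y)=\frac{A(x_1)-A(x_2)}{|x_1-y|^{d+1}}+\big(A(x_2)-A(y)\big)\Big(\frac{1}{|x_1-y|^{d+1}}-\frac{1}{|x_2-y|^{d+1}}\Big).
\]
The first term is at most $\|\nabla A\|_\infty|x_1-x_2|\,|x_1-y|^{-d-1}$. For the second term, the mean value theorem bounds the bracket by $|x_1-x_2|\,|x_1-y|^{-d-2}$, and $|A(x_2)-A(y)|\lesssim\|\nabla A\|_\infty|x_1-y|$. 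Together these give $C_2\lesssim\|\nabla A\|_\infty$. The estimate in $y$ is symmetric, using $|y_1-y_2|\le|x-y_1|/2$, and gives $C_3\lesssim\|\nabla A\|_\infty$.

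\textbf{$L^2$ boundedness.} This is the one point that is not a computation, and I would cite it rather than prove it. Under $\Omega\in L\log L(\sS^{d-1})$, homogeneity, and the order-one cancellation \eqref{cancellation condition of order 1}, the Bajšanski--Coifman theorem \cite{Bajvsanski1967On} gives $\|T_{\Omega,A,*}\|_{L^2\to L^2}\lesssim \|\Omega\|_{L\log L}\|\nabla A\|_\infty$. The step I expect to need care is confirming that the cited theorem covers the maximal truncations, not just the principal value operator, under exactly the $L\log L$ hypothesis. If it covers only the principal value, I would fall back on the standard Cotlar-type argument, via the method of rotations for the odd part and the $L\log L$ even-part theory.

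With the $L^2$ bound in hand, Theorem~\ref{main theorem} yields the weak $(1,1)$ inequality. The constant is linear in the kernel constants $C_1,C_2,C_3$ and in the $L^2$ norm, all of which are $\lesssim\|\nabla A\|_\infty$ after normalization. This gives the stated bound $C_\Omega\|\nabla A\|_{L^\infty}\|f\|_{L^1}$.
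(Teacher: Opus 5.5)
Your proposal is correct and follows the paper's proof. You apply Theorem~\ref{main theorem} to $K_A$, verify \eqref{Size condition} and \eqref{regularity condition} with $\gamma=1$ via the Lipschitz bound and the mean value theorem, and cite Baj\v{s}anski--Coifman for the $L^2$ bound; your splitting of $K_A(x_1,y)-K_A(x_2,y)$ is just the mirror image of the paper's. Your normalization $\|\nabla A\|_{L^\infty}=1$ and the explicit bound $\|T_{\Omega,A,*}\|_{L^2\to L^2}\lesssim\|\Omega\|_{L\log L}\|\nabla A\|_{L^\infty}$ make the constant dependence more transparent than the paper does. This matters because the good-function step in the main proof does use an $L^2$ norm of size $\|\Omega\|_{L\log L}$.
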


\begin{proof}
	Set
    \[
    K_A(x,y)=\frac{A(x)-A(y)}{|x-y|^{d+1}}.
    \]
By the $L^2$ boundedness of $T_{\Omega,A,*}$ (see \cite{Bajvsanski1967On}) and Theorem \ref{main theorem}, it is enough to show that the kernel $K(x,y)$ satisfies (\ref{Size condition}) and (\ref{regularity condition}).

First, since $A$ is a Lipschitz function on $\R^d$, the size condition (\ref{Size condition}) is easy to check.

Next, for fixed $y\in \R^d$ and $x_1, x_2\in\R^d$ such that $|x_1 - y| > 2|x_1 - x_2| $ (which implies $|x_1-y| \approx |x_2-y|$), we use the mean value theorem and the fact that $A$ is a Lipschitz function to get 
    \begin{align*}
    |K_A(x_1, y) - K_A(x_2,y)| 
    &\leq \Big| \frac{1}{|x_1 - y|^{d+1}} - \frac{1}{|x_2 - y|^{d+1}} \Big| \cdot |A(x_1) - A(y)|	\\
    & \qquad + \frac{1}{|x_2-y|^{d+1}} \big| A(x_1) - A(x_2) \big| \\
    &\lesssim \|\nabla A\|_{L^{\infty}(\R^d)} \frac{|x_1 - x_2|}{|x_1 - y|^{d+1}}.
    \end{align*}
Thus, the first inequality in (\ref{regularity condition}) holds. 
The second inequality follows similarly. 
This completes the proof.    
\end{proof}

\medskip
\subsection{Maximal Calder\'on commutator of Baj\v{s}anski-Coifman type} 
We next consider the Baj\v{s}anski--Coifman type operator.

    \begin{theorem}
    Let $l\geq 1$ be an integer.
    Suppose $\Omega\in L\log L(\sS^{d-1})$ satisfies (\ref{homogeneous function of degree zero}) and (\ref{cancellation condition of order l}).   
    Assume that $A\in C^{l-1}(\R^d)$ and that all its derivatives of order $l-1$ are globally Lipschitz.     
    Then, the maximal operator $T_{\Omega,A,l,*}$ defined as in (\ref{maximal Calderon commutator of B-C type}) is of weak type $(1,1)$.
    That is, for any $f\in L^1(\R^d)$ and $\lambda>0$, 
 	\[
 	\lambda \big| \{x\in\R^d: T_{\Omega, A, l,*} f(x)>\lambda \} \big| 
 	\lesssim C_{\Omega} \sum_{|\alpha| = l} \|A_{\alpha}\|_{L^{\infty}(\R^d)} \|f\|_{L^1(\R^d)}.
 	\]
    \end{theorem}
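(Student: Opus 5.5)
The plan is to apply Theorem~\ref{main theorem} with $K=K_{A,l}(x,y)=P_l(A,x,y)/|x-y|^{d+l}$. Two things have to be checked. First, $T_{\Omega,A,l,*}$ must be bounded on $L^2(\R^d)$. This I take from the Baj\v{s}anski--Coifman theorem \cite{Bajvsanski1967On}, which applies under \eqref{cancellation condition of order l} and $\Omega\in L\log L(\sS^{d-1})$. Second, $K_{A,l}$ must satisfy \eqref{Size condition} and \eqref{regularity condition} with $\gamma=1$, and all constants should be $\lesssim \sum_{|\alpha|=l}\|A_\alpha\|_{L^\infty}$. The weak type bound then follows with this factor appearing linearly, by homogeneity in $A$.

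For the size condition I use Taylor's formula with integral remainder. Since the derivatives of order $l-1$ are Lipschitz, the order-$l$ derivatives exist a.e. and are bounded. Then
\[
P_l(A,x,y)=\sum_{|\alpha|=l}\frac{l}{\alpha!}(x-y)^\alpha\int_0^1(1-t)^{l-1}A_\alpha(y+t(x-y))\,\dif t .
\]
This gives $|P_l(A,x,y)|\lesssim \sum_{|\alpha|=l}\|A_\alpha\|_\infty|x-y|^l$. It is valid along almost every segment; the exceptional segments are handled by mollifying $A$ and passing to the limit, since both sides are continuous in $x,y$. Hence $|K_{A,l}(x,y)|\lesssim \|\nabla^l A\|_\infty |x-y|^{-d}$.

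For regularity in $x$, take $|x_1-y|\ge 2|x_1-x_2|$, so that $|x_1-y|\approx|x_2-y|$. I split
\[
K_{A,l}(x_1,y)-K_{A,l}(x_2,y)=P_l(A,x_1,y)\big(|x_1-y|^{-d-l}-|x_2-y|^{-d-l}\big)+\frac{P_l(A,x_1,y)-P_l(A,x_2,y)}{|x_2-y|^{d+l}}.
\]
The first term is $\lesssim |x_1-y|^l\cdot|x_1-x_2||x_1-y|^{-d-l-1}$ by the mean value theorem. For the second term, $x\mapsto P_l(A,x,y)$ has gradient
\[
\nabla A(x)-\sum_{|\alpha|<l}\frac{A_\alpha(y)}{\alpha!}\nabla_x(x-y)^\alpha .
\]
Its components are exactly $P_{l-1}(\partial_iA,x,y)$, the Taylor remainder of order $l-1$ for $\partial_iA$. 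By the same Taylor bound this is $\lesssim\|\nabla^lA\|_\infty|x-y|^{l-1}$ for $l\ge2$. For $l=1$ the bound is simply $\|\nabla A\|_\infty$. Integrating along the segment from $x_2$ to $x_1$, which stays at distance $\approx|x_1-y|$ from $y$, gives $\lesssim |x_1-x_2||x_1-y|^{l-1}$. Both terms are therefore $\lesssim |x_1-x_2|/|x_1-y|^{d+1}$.

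Regularity in $y$ is the step I expect to be the main obstacle, because $y$ enters through all the coefficients $A_\alpha(y)$. The route I would try first is the reproducing identity obtained by differentiating in $y$:
\[
\nabla_y P_l(A,x,y)=-\sum_{|\alpha|=l-1}\frac{\nabla A_\alpha(y)}{\alpha!}(x-y)^\alpha+(\text{terms that telescope}).
\]
Formally this gives $|\nabla_y P_l(A,x,y)|\lesssim\|\nabla^lA\|_\infty|x-y|^{l-1}$ whenever $|x-y|\approx|x-y_1|$. To avoid differentiating $A_\alpha$ for $|\alpha|=l-1$ pointwise, I would argue instead by the direct difference
\[
P_l(A,x,y_1)-P_l(A,x,y_2)=\sum_{|\beta|<l}\frac{(x-y_2)^\beta}{\beta!}\,P_{l-|\beta|}(A_\beta,y_2,y_1),
\]
which is the Taylor re-expansion of each $A_\beta$ about $y_1$ evaluated at $y_2$. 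Each remainder $P_{l-|\beta|}(A_\beta,y_2,y_1)$ is bounded by $\|\nabla^lA\|_\infty|y_1-y_2|^{l-|\beta|}$. Since $|y_1-y_2|\le|x-y_1|/2$, summing gives $\lesssim\|\nabla^lA\|_\infty|y_1-y_2||x-y_1|^{l-1}$. Combining this with the same splitting of the factor $|x-y|^{-d-l}$ as in the $x$ case yields the second inequality in \eqref{regularity condition}. Theorem~\ref{main theorem} then concludes the proof.
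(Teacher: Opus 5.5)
Your proposal is correct and follows the same route as the paper: invoke Theorem~\ref{main theorem} with the $L^2$ bound of Baj\v{s}anski--Coifman, and verify \eqref{Size condition} and \eqref{regularity condition} for $K_{A,l}$ with $\gamma=1$ via Taylor's formula. Your gradient identity $\partial_{x_i}P_l(A,x,y)=P_{l-1}(\partial_iA,x,y)$ is only a repackaging of the paper's order-$(l-1)$ remainder argument for the $x$-variable, while your re-expansion identity $P_l(A,x,y_1)-P_l(A,x,y_2)=\sum_{|\beta|<l}\frac{(x-y_2)^\beta}{\beta!}P_{l-|\beta|}(A_\beta,y_2,y_1)$ (which checks out by the multinomial theorem) supplies the $y$-regularity argument that the paper only says follows ``similarly.''
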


\begin{proof}
The case $l=1$ follows from the preceding theorem.
Suppose that $l\geq2$, and set
    \[
    K_{A,l}(x,y)=\frac{P_l(A,x,y)}{|x-y|^{d+l}},
    \]
with $P_l$ as defined as in (\ref{P_l(A,x,y)}).
We have known that $T_{\Omega,A,l,*}$ is bounded on $L^2(\R^d)$ by 
\cite{Bajvsanski1967On}.
Thus, using Theorem \ref{main theorem}, it is enough to show that the kernel $K_{A,l}$ satisfies the size condition (\ref{Size condition}) and the regularity condition (\ref{regularity condition}).

First, since \(A_\alpha \in L^\infty(\mathbb{R}^d)\) for every \(|\alpha| = l\), the Taylor expansion
    \[
    P_l(A,x,y) 
    = l \sum_{|\alpha|=l} \frac{(x-y)^\alpha}{\alpha!} \int_0^1 (1-s)^{l-1} A_\alpha(y + s(x-y)) \dif s,
    \]
gives
    \[
    |K(x,y)| 
    \lesssim \sum_{|\alpha|=l} \|A_\alpha\|_{L^{\infty}(\R^d)} \frac{1}{|x-y|^d}.
    \]
So (\ref{Size condition}) holds.

Next, choose \(|x_1 - y| > 2|x_1 - x_2|\). 
Then \(|x_1 - y| \approx |x_2 - y|\). 
By the Taylor expansion, we write    
    \begin{align*}
    P_l(A,x,y) 
    &= P_{l-1}(A,x,y) - \sum_{|\alpha|=l-1} \frac{A_\alpha(y)}{\alpha!} (x-y)^\alpha \\
    &= (l-1) \sum_{|\alpha|=l-1} \frac{(x-y)^\alpha}{\alpha!} \int_0^1 (1-s)^{l-2} \big(A_\alpha(y + s(x-y)) - A_\alpha(y)\big) \dif s.
    \end{align*}
Note that \(A_\alpha\) is a Lipschitz function  for each \(|\alpha|=l-1\). 
By the mean value theorem, it is easy to see that
    \[
    |K(x_1,y) - K(x_2,y)| \lesssim \sum_{|\alpha|=l} \|A_\alpha\|_{L^{\infty}(\R^d)} \frac{|x_1 - x_2|}{|x_1 - y|^{d+1}}.
    \]
The second inequality in (\ref{regularity condition}) follows similarly. 
Hence, (\ref{regularity condition}) holds for \(K(x,y)\). 
This completes the proof.      
\end{proof}

\medskip
\subsection{Maximal general singular integral of Muckenhoupt type} 
Our final application concerns the Muckenhoupt type operator.


    \begin{theorem}
	Suppose $\Omega\in L\log L(\sS^{d-1})$ satisfies (\ref{homogeneous function of degree zero}) and  (\ref{cancellation condition}).
    Then, the maximal operator $T_{\Omega,ir,*}$ defined as in (\ref{maximal general singular integral of Muckenhoupt type}) is of weak type $(1,1)$. 
    That is, for any $f\in L^1(\R^d)$ and $\lambda>0$, 
 	\[
 	\lambda \big| \{x\in\R^d: T_{\Omega, ir,*} f(x)>\lambda \} \big| 
 	\lesssim C_{\Omega} \|f\|_{L^1(\R^d)}.
 	\]
    \end{theorem}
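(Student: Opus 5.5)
The plan is to deduce this from Theorem~\ref{main theorem} applied with $K(x,y)=K_r(x,y)=|x-y|^{-d-ir}$, exactly as for the two commutator theorems above. That requires checking three hypotheses: the size condition \eqref{Size condition}, the regularity condition \eqref{regularity condition}, and the $L^2$ boundedness of $T_{\Omega,ir,*}$.

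\textbf{Size condition.} This is immediate, since
\[
\big||x-y|^{-d-ir}\big| = |x-y|^{-d},
\]
so \eqref{Size condition} holds with $C_1=1$.

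\textbf{Regularity condition.} Take $|x_1-y|\ge 2|x_1-x_2|$, so that $|x_2-y|\approx|x_1-y|$ and every point $z$ on the segment joining $x_1$ and $x_2$ satisfies $|z-y|\approx|x_1-y|$. The function $z\mapsto |z-y|^{-d-ir}$ is smooth away from $y$, and its gradient is bounded by
\[
|d+ir|\,|z-y|^{-d-1}.
\]
The mean value theorem then gives
\[
|K_r(x_1,y)-K_r(x_2,y)|\lesssim (d+|r|)\,\frac{|x_1-x_2|}{|x_1-y|^{d+1}},
\]
which is \eqref{regularity condition} with $\gamma=1$ and $C_2\lesssim d+|r|$. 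The kernel depends only on $x-y$, so the estimate in the second variable follows by the same computation. The constants depend on $r$, which is fixed, so this dependence is harmless. The resulting bound also carries a factor depending on $r$ that I suppress into $\lesssim$.

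\textbf{$L^2$ boundedness.} This is the only genuinely external input. I would quote \cite[Theorem~8]{Muckenhoupt1960On}, which gives $L^p$ boundedness of $T_{\Omega,ir,*}$ for $1<p<\infty$, as stated in the introduction. That result is stated for $\Omega\in L^1(\sS^{d-1})$ with \eqref{cancellation condition}, and our $\Omega$ lies in $L\log L\subset L^1$, so it applies; taking $p=2$ gives what we need. This is the step I would double-check, to make sure the cited result covers the maximal truncation and not only the principal value operator.

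If there were any doubt, a fallback is available. Write $|x-y|^{-ir}=e^{-ir\log|x-y|}$. On each dyadic annulus this factor is a smooth radial multiplier with $O(|r|)$ derivative bounds, so the truncations can be compared with those of $\Omega(x-y)|x-y|^{-d}$ through the method of rotations and Calder\'on--Zygmund's even/odd argument. I would not expect to need this.

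With all three hypotheses verified, Theorem~\ref{main theorem} yields
\[
\lambda\big|\{x\in\R^d: T_{\Omega,ir,*}f(x)>\lambda\}\big|\lesssim C_\Omega\|f\|_{L^1(\R^d)},
\]
with implicit constant depending on $d$ and $r$. No step is a real obstacle; the only care needed is the bookkeeping that the Lipschitz constant of $K_r$ grows linearly in $|r|$.
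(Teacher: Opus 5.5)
Your proposal is correct and follows the paper's proof essentially step for step. You verify \eqref{Size condition} trivially and \eqref{regularity condition} with $\gamma=1$ by the mean value theorem, take the $L^2$ boundedness of $T_{\Omega,ir,*}$ from \cite[Theorem~8]{Muckenhoupt1960On}, and then invoke Theorem~\ref{main theorem}. The only difference is cosmetic: the paper splits the kernel difference into the modulus factor $|x-y|^{-d}$ and the phase $e^{-ir\ln|x-y|}$, whereas you differentiate the complex power directly, which is equivalent (and, like the paper, the $r$-dependence of the constants is absorbed into $\lesssim$).
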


\begin{proof}
Set
   \[
    K_r(x,y) = \frac{1}{|x-y|^{d+ir}}.
    \]
By Theorems \ref{main theorem} and the $L^2$ boundedness of $T_{\Omega, ir,*}$ (see \cite[Theorem~8]{Muckenhoupt1960On}), it suffices to verify the kernel $K_r(x,y)$ satisfies the size condition (\ref{Size condition}) and the regularity condition (\ref{regularity condition}).

Clearly, \(|K_r(x,y)| = \frac{1}{|x-y|^d}\) gives (\ref{Size condition}).

Suppose \(|x_1 - y| > 2|x_1 - x_2|\), then \(|x_1 - y| \approx |x_2 - y|\). 
By the mean value theorem, we have
    \[
    \begin{aligned}
    &|K_r(x_1,y) - K_r(x_2,y)| \\
    &\le \left| \frac{1}{|x_1-y|^d} - \frac{1}{|x_2-y|^d} \right| + \frac{1}{|x_2-y|^d} \left| e^{-ir \ln |x_1-y|} - e^{-ir \ln |x_2-y|} \right| \\
    &\lesssim \frac{|x_1 - x_2|}{|x_1-y|^{d+1}}.
    \end{aligned}
    \]
So the first inequality in (\ref{regularity condition}) holds.     
Similarly, the second inequality in (\ref{regularity condition}) follows the same way. 
Hence, the proof is complete. 
\end{proof}

\appendix
\section{Two lemmas} \label{Appendix}

We need two classical results.
The first one is the H\"ormander-Mihlin multiplier theorem (see \cite{Grafakos2014Classical}).
\begin{lemma}[H\"ormander-Mihlin multiplier theorem]  \label{bounded T_m}
Let $m$ be a bounded complex-valued function on $\R^d\setminus\{0\}$.
Assume that for all multi-indices $\alpha$ with $|\alpha|\le \lfloor d/2\rfloor+1$, the following derivative estimate holds:
    \begin{align*}
    |\partial^{\alpha}_{\xi} m(\xi) |
    \leq M |\xi|^{-|\alpha|}. 
    \end{align*}
Define the multiplier operator $T_m$ by $\mathcal{F}(T_m h)(\xi)=m(\xi)\mF(h)(\xi)$.
Then for every $1<p<\infty$, $T_m$ is of strong type $(p,p)$ for $1<p<\infty$ with operator norm at most $C_d\bigl(M+\|m\|_{L^\infty(\R^d)}\bigr)$.
\end{lemma}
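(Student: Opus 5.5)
The statement is the H\"ormander--Mihlin multiplier theorem. I will prove it through the Calder\'on--Zygmund machinery: first an $L^2$ bound, then weak type $(1,1)$ from a H\"ormander integral condition on the kernel, then interpolation and duality.

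\textbf{$L^2$ bound.} By Plancherel, $\|T_m\|_{L^2\to L^2}=\|m\|_{L^\infty}$.

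\textbf{Kernel decomposition.} Take the Littlewood--Paley partition $\psi_k$ introduced before Lemma~\ref{||I - P_{j-s kappa} Lambda_k||}. Write $m=\sum_k m\psi_k$ and set $K_k=\mathcal F^{-1}(m\psi_k)$. Let $N=\lfloor d/2\rfloor+1$. On the annulus $|\xi|\approx 2^{-k}$, the hypothesis together with the Leibniz rule gives
\[
|\partial^\alpha(m\psi_k)|\lesssim (M+\|m\|_\infty)2^{k|\alpha|}, \qquad |\alpha|\le N .
\]

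\textbf{Pointwise and gradient bounds for $K_k$.} Fix $\alpha$ with $|\alpha|=N$. Plancherel on this annulus, of volume $\approx 2^{-kd}$, gives
\[
\|\,|x|^{N}K_k\|_{L^2}\lesssim (M+\|m\|_\infty)\,2^{kN}2^{-kd/2},
\]
and the same bound holds with $N$ replaced by $0$. Cauchy--Schwarz on $\{|x|\ge R\}$ and on $\{|x|<R\}$ then yields
\[
\int_{|x|\ge R}|K_k|\lesssim (M+\|m\|_\infty)\,(2^k/R)^{N-d/2}, \qquad \|K_k\|_{L^1}\lesssim M+\|m\|_\infty .
\]
For the gradient, apply the same argument to $\xi\,m\psi_k$, which produces an extra factor $2^{-k}$:
\[
\int|\nabla K_k|\lesssim (M+\|m\|_\infty)\,2^{-k}.
\]
Here $N>d/2$ is exactly what makes the exponent $N-d/2$ positive.

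\textbf{H\"ormander condition.} Fix $|y|\le R$. I split the sum over $k$ into two ranges.
\begin{itemize}
\item For $2^{-k}R\le 1$, the mean value theorem gives
\[
\int|K_k(x-y)-K_k(x)|\,dx\lesssim (M+\|m\|_\infty)\,R2^{-k}.
\]
\item For $2^{-k}R>1$, I use the tail bound outside $|x|\ge 2R$, which gives $(M+\|m\|_\infty)(2^k/R)^{N-d/2}$.
\end{itemize}
Both ranges sum to geometric series, so
\[
\sup_{y\neq 0}\int_{|x|\ge 2|y|}|K(x-y)-K(x)|\,dx\lesssim M+\|m\|_\infty .
\]

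\textbf{Weak $(1,1)$ and the full range.} A Calder\'on--Zygmund decomposition of $f$ combines the $L^2$ bound with the H\"ormander condition to give weak type $(1,1)$ with constant $\lesssim M+\|m\|_\infty$. Marcinkiewicz interpolation then gives $L^p$ boundedness for $1<p\le 2$. For $2<p<\infty$, I use duality: the adjoint multiplier $\bar m$ satisfies the same hypotheses with the same constant.

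\textbf{Main obstacle.} The one step needing care is the tail bound for $K_k$. It must use only $N$ derivatives, which is why the weighted Plancherel argument is used rather than pointwise integration by parts. Everything else is routine.
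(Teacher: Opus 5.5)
Your argument is correct. It is the standard proof in \cite{Grafakos2014Classical}, which the paper cites instead of giving its own: Littlewood--Paley pieces of $m$, weighted Plancherel with $\lfloor d/2\rfloor+1$ derivatives to obtain H\"ormander's integral condition on the kernel, then Calder\'on--Zygmund theory, Marcinkiewicz interpolation and duality. Two steps are left implicit but are routine: checking that $\sum_k K_k$ converges in $L^1_{\mathrm{loc}}(\R^d\setminus\{0\})$ to the kernel representing $T_m$ off the support of $f$, and noting that the resulting $L^p$ bound necessarily carries a $p$-dependent factor such as $\max\{p,(p-1)^{-1}\}$, which the statement's constant $C_d$ suppresses.
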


The second lemma is the Rademacher-Menshov theorem (see \cite{Demeter2008Maximal}), which plays an important role in linearizing the maximal operator.  
\begin{lemma} [Rademacher-Menshov theorem] \label{Rademacher-Menshov}
Let $(X,\sigma)$ be a measure space and $\{h_j\}_{1\leq j\leq N}$ be a sequence of measurable functions satisfying the Bessel-type inequality:
for any finite sequence $\{\epsilon_j\}_{1\leq j\leq N}$ with each $\epsilon_j\in \{-1,1\}$,
    \begin{align*}
    \Big\| \sum_{1\leq j \leq N } \epsilon_j h_j \Big\|_{L^2(X)}
    \leq B.
    \end{align*}
Then, the following maximal inequality holds
    \begin{align*}
    \Big\| \sup_{0<M\leq N} \Big|\sum_{1\leq j \leq M } h_j \Big| \Big\|_{L^2(X)}
    \lesssim B \log(2+N).
    \end{align*}
\end{lemma}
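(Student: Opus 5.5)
The statement is the Rademacher--Menshov theorem, a classical result. I will prove it by the standard dyadic-interval (binary tree) argument.

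\textbf{Step 1: reduce to a power of two and set up dyadic blocks.}
First I reduce to $N=2^n$ by padding with zero functions $h_j=0$ for $N<j\le 2^n$. This keeps the hypothesis with the same $B$ and changes $\log(2+N)$ only by a constant. For $0\le \ell\le n$, consider the dyadic subintervals $I$ of $\{1,\dots,2^n\}$ of length $2^{n-\ell}$, and write $S_I=\sum_{j\in I}h_j$. Every initial segment $\{1,\dots,M\}$ with $M\le 2^n$ is a disjoint union of at most one dyadic interval from each level $\ell$, determined by the binary expansion of $M$. Hence, pointwise,
\[
\sup_{0<M\le N}\Big|\sum_{j\le M}h_j\Big|\le \sum_{\ell=0}^{n}\max_{|I|=2^{n-\ell}}|S_I|\le \sum_{\ell=0}^{n}\Big(\sum_{|I|=2^{n-\ell}}|S_I|^2\Big)^{1/2}.
\]

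\textbf{Step 2: bound each level.}
The key claim is that, for each fixed level $\ell$,
\[
\Big\|\Big(\sum_{|I|=2^{n-\ell}}|S_I|^2\Big)^{1/2}\Big\|_{L^2}\le B.
\]
To prove it, let $r_I$ be independent Rademacher signs indexed by the intervals $I$ at this level, and set $\epsilon_j=r_I$ for $j\in I$. Orthogonality of Rademacher variables gives
\[
\mathbb E\Big|\sum_I r_I S_I\Big|^2=\sum_I|S_I|^2
\]
pointwise. Integrating over $X$ and applying the hypothesis to each sign choice $\{\epsilon_j\}$, we get $\int\sum_I|S_I|^2\,\dif\sigma\le B^2$. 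The expectation is only over finitely many sign patterns, so Fubini applies trivially.

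\textbf{Step 3: sum the levels.}
By Minkowski's inequality over the $n+1$ levels, the maximal function has $L^2$ norm at most $(n+1)B\lesssim B\log(2+N)$.

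I expect no real obstacle. The only point needing care is the binary decomposition of $[1,M]$, namely that it uses at most one dyadic interval per level. Writing $M=\sum_{\ell}\beta_\ell 2^{n-\ell}$ with $\beta_\ell\in\{0,1\}$, this follows by taking greedily the successive blocks $\big(\sum_{\ell'<\ell}\beta_{\ell'}2^{n-\ell'},\ \sum_{\ell'\le\ell}\beta_{\ell'}2^{n-\ell'}\big]$. Each such block is dyadic of length $2^{n-\ell}$, since its left endpoint is a multiple of $2^{n-\ell}$.
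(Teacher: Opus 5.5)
Your proof is correct and complete. You decompose $\{1,\dots,M\}$ into at most one dyadic block per level, use random signs constant on the blocks of a fixed level to get $\bigl\|\bigl(\sum_{|I|=2^{n-\ell}}|S_I|^2\bigr)^{1/2}\bigr\|_{L^2}\le B$, and apply Minkowski over the $n+1\lesssim\log(2+N)$ levels; this is the standard argument, and the paper gives no proof of this lemma, only quoting it from \cite{Demeter2008Maximal}, so there is no route of its own to compare with.
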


\bibliographystyle{amsplain}
\bibliography{rf.bib}

\end{document}